\newtheorem{theorem}{Theorem}
\newtheorem{proposition}[theorem]{Proposition}
\newtheorem{corollary}[theorem]{Corollary}
\newtheorem{lemma}[theorem]{Lemma}
\theoremstyle{definition}
\newtheorem{definition}[theorem]{Definition}
\newtheorem{remark}[theorem]{Remark}
\newtheorem*{notation}{Notation}
\newcommand{\pp}{\mathbb{P}}
\newcommand{\nn}{\mathbb{N}}
\newcommand{\qq}{\mathbb{Q}}
\newcommand{\zz}{\mathbb{Z}}
\newcommand{\amplebu}[1]{\mathcal{O}_{X}({#1})}
\newcommand{\linebu}{\mathsf{N}}
\newcommand{\fr}{$\eps$}%{\begin{small}\texttt{fr}\end{small}}
\newcommand{\eps}{\mathbf{\varepsilon}}
\newcommand{\epss}[1]{\eps_{\scriptscriptstyle{#1}}}
\newcommand{\deltabar}{\overline{\delta}}
\newcommand{\Hom}{\text{Hom}}
\newcommand{\pic}{\mbox{Pic}}
\newcommand{\rk}[1]{\text{rk}({#1})}
\newcommand{\rkk}[1]{r_{\scriptscriptstyle{#1}}}
\newcommand{\coker}{\text{co}\ker}
\newcommand{\rest}[2]{{#1}_{\mid_{#2}}}
\newcommand{\tipo}{{\underline{\mathfrak{t}}}}
\newcommand{\ab}{_{a,b}}
\newcommand{\abc}{_{a,b,c}}
\newcommand{\kk}[2]{\textsf{k}_{\scriptscriptstyle{#1},{#2}}}
\newcommand{\kkvoid}{\textsf{k}}
\newcommand{\dual}[1]{{#1}^{\vee}}
\newcommand{\ggeq}{\stackrel{\scriptscriptstyle >}{\null_{\scriptscriptstyle{(=)}}}}
\newcommand{\lleq}{\stackrel{\scriptscriptstyle <}{\null_{\scriptscriptstyle{(=)}}}}
\newcommand{\ppreceq}{\stackrel{\scriptscriptstyle \prec}{\null_{\scriptscriptstyle (-)}}}
\newcommand{\ssucceq}{\stackrel{\scriptscriptstyle \succ}{\null_{\scriptscriptstyle (-)}}}
\newcommand{\efilcal}{\ecal^{\bullet}}
\newcommand{\alphafil}{\underline{\alpha}}
\newcommand{\filtrationcal}{\efilcal,\alphafil}
\newcommand{\ttI}{\mathtt{I}}
\newcommand{\ttIbar}{\overline{\mathtt{I}}}
\newcommand{\ttJ}{\mathtt{J}}
\newcommand{\pttI}{P_\ttI}
\newcommand{\muttI}{\mu_\ttI}
\newcommand{\rttI}{R_\ttI}
\newcommand{\gammattI}{\gamma_{\ttI}}
\newcommand{\hilpol}[1]{\mathsf{P}_{\scriptscriptstyle{#1}}}
\newcommand{\hilpolred}[1]{\mathsf{p}_{\scriptscriptstyle{#1}}}
\newcommand{\ecal}{\mathcal{E}}
\newcommand{\fcal}{\mathcal{F}}
\newcommand{\gcal}{\mathcal{G}}
\newcommand{\wcal}{\mathcal{W}}
\newcommand{\acal}{\mathcal{A}}
\newcommand{\lcal}{\mathcal{L}}
\newcommand{\qcal}{\mathcal{Q}}
\newcommand{\ocal}{\mathcal{O}}
\newcommand{\tcal}{\mathcal{T}}
\newcommand{\quot}{\mathsf{Q}{\scriptscriptstyle{{\text{uot}}}}}
\newcommand{\quotfunctor}{\underline{\mathsf{Q}{\scriptscriptstyle{{\text{uot}}}}}}
\newcommand{\qttfunctor}{\underline{\mathsf{Q}}}
\newcommand{\ai}{\mathsf{a}}
\newcommand{\vai}{{V_{\mathsf{a}}}}
\newcommand{\dmu}{\mu_{\eps}}%{\texttt{\tiny{fr}}}}
\newcommand{\ddeg}{\deg_{\eps}}%{\texttt{\tiny{fr}}}}
\newcommand{\kker}{\mathsf{K}}
\newcommand{\wtild}[1]{\widetilde{#1}}
\newcommand{\deter}{\gcal_\ai} %da sistemare
\newcommand{\sudi}[1]{{#1}_D}
\newcommand{\degq}{\deg^q}
\newcommand{\muq}{\mu^q}
\newcommand{\ddegq}{\ddeg^q}
\newcommand{\dmuq}{\dmu^q}
\newcommand{\rkq}[1]{{\text{rk}^q({#1})}}
\newcommand{\rqq}{{r^q}}
\author{Andrea Pustetto}
\email{pustetto@gmail.com}
\title{Metha-Ramanathan for $\eps$ and $\kkvoid$-semistable Decorated Sheaves}
\date{\today}
\begin{document}

\maketitle

\begin{abstract}
This paper is devoted to generalizing the Mehta-Ramanathan restriction theorem to the case of $\eps$-semistable and $\kkvoid$-semistable decorated sheaves. After recalling the definition of decorated sheaves and their usual semistability we define the $\eps$ and $\kkvoid$-(semi)stablility. We first prove the existence of a (unique) \fr-maximal destabilizing subsheaf for decorated sheaves (Section \ref{maximal-dest-subsheaf}). After some others preliminar results (such as the opennes condition for families of \fr-semistable decorated sheaves) we finally prove, in Section \ref{sec-restriction-theorem}, a restriction theorem for slope \fr-semistable decorated sheaves. In Section \ref{sec-mehta-ramanathan-2} we reach the same results in the $\kkvoid$-semistability case that we did in the \fr-semistability, but only for rank $\leq3$ decorated sheaves.
\end{abstract}

\section{Introduction}
In the framework of bundles with a decoration we recall two types of objects which incorporate all others: \textbf{decorated sheaves} and the so-called \textbf{tensors}. The former were introduced by Schmitt while the latter by Gomez and Sols. We recall briefly the definitions of such objects. A \textit{decorated sheaf} of type $(a,b,c,\linebu)$ over $X$ is a pair $(\ecal,\varphi)$ where $\ecal$ is a torsion free sheaf over $X$ and
\begin{equation*}
 \varphi:(\ecal^{\otimes a})^{\oplus b}\otimes (\det\dual{\ecal})^{\otimes c}\longrightarrow\linebu,
\end{equation*}
while a \textit{tensor} of type $(a,b,c,D_u)$ is a pair $(\ecal,\varphi)$ where $\ecal$ is a torsion free sheaf and
\begin{equation*}
 \varphi:(\ecal^{\otimes a})^{\oplus b}\otimes\longrightarrow(\det\ecal)^{\otimes c}\otimes D_u,
\end{equation*}
where $D_u$ is a locally free sheaf belonging to a fixed family $\{D_u\}_{u\in R}$ parametrized by a scheme $R$. As one can easily see, these two objects are quite similar; they both incorporate many types of bundles with a morphism, such as framed bundles, Higgs bundles, quadratic, orthogonal and symplectic bundles, and many others. The problem with classifying decorated sheaves up to equivalence is therefore related to many classification problems in algebraic geometry. In order to solve this problem by establishing the existence of a coarse moduli space one needs to introduce a notion of semistability. The notion of semistability for decorated sheaves and tensors is the same. A semistability condition for these objects was introduced by Bradlow, Garcia-Prada, Gothen and Mundet i Riera in \cite{BGPM} and \cite{GPGMR2} and then studied by Schmitt in the case of decorated bundle (see for example \cite{Sch}) and by Gomez-Sols in the case of tensors (see for exaple \cite{GS-1}).\\

In both cases one tests the (semi)stability of an object $(\ecal,\varphi)$ against \textbf{saturated weighted fitrations} of $\ecal$, namely against pairs $(\efilcal,\alphafil)$ consisting of a filtration
\begin{equation*}
 \efilcal\,:\quad 0\subset\ecal_1\subset\dots\subset\ecal_s\subset\ecal_{s+1}=\ecal
\end{equation*}
of saturated sheaves of $\ecal$, i.e., such that $\ecal/\ecal_i$ is torsion free, and a tuple
\begin{equation*}
 \alphafil = (\alpha_1,\dots,\alpha_s)
\end{equation*}
of positive rational numbers. Then one says that a decorated sheaf or tensor $(\ecal,\varphi)$ is (semi)stable with respect to $\delta$ if and only if for any weighted filtration 
\begin{equation*}
P(\efilcal,\alphafil)+\delta\;\mu(\efilcal,\alphafil;\varphi)\ssucceq0,
\end{equation*}
where $P(\efilcal,\alphafil)$ is a polynomial depending on the Hilbert polynomials of the sheaves of the weighted filtration, $\delta$ is a fixed polynomial and $\mu(\efilcal,\alphafil;\varphi)$ a number depending on the weighted filtration and on $\varphi$ (see Definition \ref{def-ss}).\\

As one can easily observe when looking at the definition of $\mu(\efilcal,\alphafil;\varphi)$, the semistability for decorated sheaves is not easy to handle and is quite complicated to calculate in general. This fact affects the possibility of generalizing many basic tools that instead exist for vector bundles. For example, until now, there is no notion of a maximal destabilizing object for decorated sheaves nor is there a notion of Jordan-H\"older or Harder-Narasimhan filtration although some results in this direction can be found in \cite{GS-1} for similar objects. This paper is devoted to the study of the semistability condition of decorated bundles in order to better understand and simplify it in the hope that this will be useful in the study of decorated sheaves. In \cite{AlePu2} we tried, and succeded for the case of $a=2$, to bound the length of weighted filtrations on which one checks the semistability condition, while in this paper we approach the problem in a different way: we ``enclose'' the above semistability condition between a stronger semistability condition and a weaker one. To be more precise: we say that a decorated sheaf $(\ecal,\varphi)$ of type $(a,b,c,\linebu)$ is \textbf{$\eps$-(semi)stable} with respect to a fixed polynomial $\delta$ of degree $\dim X -1$ if for any subsheaf $\fcal\subset\ecal$
\begin{equation*}
 \rk{\ecal}\left(\hilpol{\fcal}-a\,\delta\,\eps(\rest{\varphi}{\fcal})\right)\ppreceq\rk{\fcal}\left(\hilpol{\ecal}-a\,\delta\,\eps(\varphi)\right),
\end{equation*}
where $\hilpol{\fcal}$ denotes the Hilbert polynomial of a sheaf $\fcal$ and $\eps(\varphi)=1$ if $\varphi\neq0$ and zero otherwise. Similarly, given a sheaf $\ecal$ and a subsheaf $\fcal\subset\ecal$, we define a function $\kk{\fcal}{\ecal}$ with values in the set $\{0,1,\dots,a\}$ and depending on the behaviour of $\varphi$ on $\fcal$ as subsheaf of $\ecal$ (see Equation \eqref{eq-def-kk}). Then we say that a decorated sheaf $(\ecal,\varphi)$ is \textbf{$\kkvoid$-(semi)stable} if for any subsheaf $\fcal\subset\ecal$ one has that
\begin{equation*}
  \rk{\ecal}\left(\hilpol{\fcal}-\delta\,\kk{\fcal}{\ecal})\right)\ppreceq\rk{\fcal}\left(\hilpol{\ecal}-\delta\,\kk{\ecal}{\ecal}\right).
\end{equation*}
What happens is that
\begin{equation*}
 \eps\text{-(semi)stable} \;\Rightarrow\; \text{(semi)stable} \;\Rightarrow\; \kkvoid\text{-(semi)stable}
\end{equation*}
and, if $\rk{\ecal}=2$, (semi)stability is equivalent to $\kkvoid$-(semi)stability. In this respect, we generalize some known results (in the case of vector bundles) to the case of \fr-semistable decorated sheaf and, to a lesser extent, to the $\kkvoid$-(semi)stable case. In fact, using \fr-semistability, we find the \fr-maximal destabilizing subsheaf, prove a Mehta-Ramanathan's like theorem about the behavior of slope \fr-semistability under restriction to curves. Since $\kkvoid$-semistability is a little bit more complicated to handle, we managed to find a $\kkvoid$-maximal destabilizing subsheaf and prove a Mehta-Ramanathan theorem only for rank $\leq3$.\\
\section{Definition and first properties}\label{section-decorated-sheaves}
\begin{notation}
Let $(X,\amplebu{1})$ be a polarized projective smooth variety of dimension $n$, $\delta=\delta(x)\doteqdot\delta_{n-1}x^{n-1}+\dots+\delta_{1}x+\delta_{0}\in\qq[x]$ be a fixed polynomial with positive leading coefficient and let $\deltabar=\delta_{n-1}$. Recall that given two polynomials $p(m)$ and $q(m)$ then $p\preceq q$ if and only if there exists $m_0\in\nn$ such that $p(m)\leq q(m)$ for any $m\geq m_0$.
\end{notation}
\begin{definition}\label{def-decorato}
Let $\linebu$ be a line bundle over $X$ and let $a,b,c$ be nonnegative integers. A \textbf{decorated vector bundle} of type $\tipo=(a,b,c,\linebu)$ over $X$ is the datum of a vector bundle $E$ over $X$ and a morphism
\begin{equation}\label{eq-def-decorato}
\varphi:E\abc\doteqdot (E^{\otimes a})^{\oplus b}\otimes(\det E)^{\otimes -c}\longrightarrow\linebu
\end{equation}
A \textbf{decorated sheaf} of type $\tipo$ is instead a pair $(\ecal,\varphi)$ such that $\ecal$ is a torsion free sheaf and $\varphi$ is a morphism as in (\ref{eq-def-decorato}).
Sometimes we will call these objects simply decorated sheaves (respectively bundles) instead of decorated sheaves (resp. bundles) of type $\tipo=(a,b,c,\linebu)$ if the input data are understood.
\end{definition}

\begin{remark}
 Note that, although $\ecal$ is torsion free, the sheaf $\ecal\abc$ may have torsion.
\end{remark}

Now we define morphisms between such objects. Let $(\ecal,\varphi)$ and $(\ecal',\varphi')$ be decorated sheaves (resp. bundles) of the same type $\tipo$. A morphism of sheaves (resp. bundles) $f:\ecal\to\ecal'$ is a \textbf{morphism of decorated sheaves} (resp. \textbf{bundles}) if exists a scalar morphism $\lambda:\linebu\to\linebu$ making the following diagram commute:
\begin{equation}\label{eq-def-morphism-dec}
 \xymatrix{ 
\ecal\abc \ar[d]^{\varphi} \ar[r]^{f\abc} & \ecal'\abc \ar[d]^{\varphi'}\\
\linebu \ar[r]^{\lambda} & \linebu.
}
\end{equation}

We will say that a morphism of decorated sheaves (bundles) $f:(\ecal,\varphi)\to(\ecal',\varphi')$ is \textbf{injective} if exists an injective morphism of sheaves (bundles) $f:\ecal\hookrightarrow\ecal'$ and a \underline{non-zero} scalar morphism $\lambda$ such that the above diagram commutes. Analogously we will say that a morphism of decorated sheaves (bundles) $f:(\ecal,\varphi)\to(\ecal',\varphi')$ is \textbf{surjective} if exists a surjective morphism of sheaves (bundles) $f:\ecal\to\ecal'$ and a scalar morphism $\lambda$ making diagram (\ref{eq-def-morphism-dec}) commute. Finally we will say that $(\ecal,\varphi)$ and $(\ecal',\varphi')$ are \textbf{equivalent} if exists an injective and surjective morphism of decorated sheaves (bundles) between them.
\subsection{Semistability conditions}\label{sec-semistability-conditions}
Let $(\ecal,\varphi)$ be a decorated sheaf of type $\tipo=(a,b,c,\linebu)$ and let $r=\rk{\ecal}$. We want to recall the notion of semistability for these objects. To this end let
\begin{equation}
 0\subsetneq\ecal_{i_1}\subsetneq\dots\subsetneq\ecal_{i_s}\subsetneq\ecal_{r}=\ecal
\end{equation}
be a filtration of saturated subsheaves of $\ecal$ such that $\rk{\ecal_{i_j}}=i_j$ for any $j=1,\dots,s$, let $\alphafil=(\alpha_{i_1},\dots,\alpha_{i_s})$ be a vector of positive rational numbers and finally let $\ttI=\{i_1,\dots,i_s\}$ be the set of indexes appearing in the filtration. We will refer to the pair $(\filtrationcal)_\ttI$ as \textbf{weighted filtration} of $\ecal$ indexed by $\ttI$ or simply weighted filtration if the set of indexes is clear from the context, moreover we will denote by $|\ttI|$ the cardinality of the set $\ttI$. Such a weighted filtration defines the polynomial
\begin{equation}\label{eq-def-P-di-filtrazione}
 \pttI(\filtrationcal)\doteqdot\sum_{i\in\ttI}\alpha_i\left(\hilpol{\ecal}\cdot\rk{\ecal_i}-\rk{\ecal}\cdot\hilpol{\ecal_i}\right),
\end{equation}
and the rational number
\begin{equation}\label{eq-def-slope-ss1}
 L_\ttI(\filtrationcal)\doteqdot\sum_{i\in\ttI}\alpha_i\left(\deg{\ecal}\cdot\rk{\ecal_i}-\rk{\ecal}\cdot\deg{\ecal_i}\right).
\end{equation}
Moreover we associate with $(\filtrationcal)_\ttI$ the following rational number depending also on $\varphi$:
\begin{equation}\label{eq-def-mu}
 \muttI(\filtrationcal;\varphi)\doteqdot -\min_{i_1,\dots,i_a\in\ttIbar}\{ \gammattI^{(i_1)}+\dots+\gammattI^{(i_a)} \,|\, \rest{\varphi}{(\ecal_{i_1}\otimes\dots\otimes\ecal_{i_a})^{\oplus b}}\not\equiv0\}
\end{equation}
where $\ttIbar\doteqdot\ttI\cup\{r\}$, $\hilpol{\ecal}$ (respectively $\hilpol{\ecal_i}$) is the Hilbert polynomial of $\ecal$ (resp. $\ecal_i$) and
\begin{align}
  \gammattI & =(\gammattI^{(1)},\dots,\gammattI^{(r)})\nonumber\\ 
             & \doteqdot \sum_{i\in\ttI} \alpha_i (\underbrace{\rk{\ecal_i}-r,\dots,\rk{\ecal_i}-r}_{\rk{\ecal_i}\text{-times}},\underbrace{\rk{\ecal_i},\dots,\rk{\ecal_i}}_{r-\rk{\ecal_i}\text{-times}}).
\end{align}

\begin{definition}[\textbf{Semistability}]\label{def-ss}
 A decorated sheaf $(\ecal,\varphi)$ of type $(a,b,c,\linebu)$ is $\mathbf{\delta}$-\textbf{(semi)stable} if for any weighted filtration $(\filtrationcal)$ the following inequality holds:
\begin{equation}\label{eq-def-ss}
  \pttI(\filtrationcal)+\delta\muttI(\filtrationcal;\varphi)\ssucceq0.
\end{equation}
It is \textbf{slope $\deltabar$-(semi)stable} if
\begin{equation}\label{eq-def-slope-ss}
   L_\ttI(\filtrationcal)
   +\deltabar\muttI(\filtrationcal;\varphi)\ggeq0.
\end{equation}
\end{definition}
\begin{notation}
 The notation “$\ggeq$” ($\ssucceq$) means that “$>$” (resp. $\succ$) has to be used in the definition of stable and “$\geq$” (resp. $\succeq$) in the definition of semistable.
\end{notation}
\begin{remark}\label{rem-diamond}
 \begin{enumerate}
  \item The morphism $\varphi:\ecal\abc\to\linebu$ induces a morphism $\ecal\ab\to(\det\ecal)^{\otimes c}\otimes\linebu$. With abuse of notation, we still refer to the former by $\varphi$. In this context is easy to see that a decorated sheaf of type $(a,b,c,\linebu)$ corresponds (uniquely up to isomorphism) to a decorated sheaf of type $(a,b,0,(\det\ecal)^{\otimes c}\otimes\linebu)$. Therefore the category of decorated sheaves (with fixed determinant $=\lcal$) of type $(a,b,c,\linebu)$  is equivalent to the category of decorated sheaves (with fixed determinant $=\lcal$) of type $(a,b,0,\lcal^{\otimes c}\otimes\linebu)$. For this reason if $P$ is any property witch does not involve families of decorated sheaves but just a fixed decorated sheaf then $P$ holds true for decorated sheaves of type $(a,b,c,\linebu)$ if and only if it holds true for decorated sheaves of type $(a,b,\linebu)$.
  \item Let $(\filtrationcal)$ be a weighted filtration and suppose that $\muttI=-(\gammattI^{(i_1)}+\dots+\gammattI^{(i_a)})$, then there exists at least one permutation $\sigma:\{i_1,\dots,i_a\}\to\{i_1,\dots,i_a\}$ such that $\rest{\varphi}{(\ecal_{\sigma(i_1)}\otimes\dots\otimes\ecal_{\sigma(i_a)})^{\oplus b}}\not\equiv0$. We can say that, although the morphism $\varphi$ is not symmetric, the semistability condition has a certain symmetric behavior.
  \item From now on we will write
\begin{equation*}
 \rest{\varphi}{(\ecal_{i_1}\diamond\dots\diamond\ecal_{i_a})^{\oplus b}}\not\equiv0
\end{equation*}
if there exists at least one permutation $\sigma:\{i_1,\dots,i_a\}\to\{i_1,\dots,i_a\}$ such that $\rest{\varphi}{(\ecal_{\sigma(i_1)}\otimes\dots\otimes\ecal_{\sigma(i_a)})^{\oplus b}}\not\equiv0$.
 \end{enumerate}
\end{remark}

Fix now a weighted filtration $(\filtrationcal)$ indexed by $\ttI$, define $r_s\doteqdot\rk{\ecal_s}$ and suppose that the minimum of $\muttI(\filtrationcal;\varphi)$ is attained in $i_1,\dots,i_a$. Then
\begin{align*}
 \muttI(\filtrationcal;\varphi)= & -(\gammattI^{(i_1)}+\dots+\gammattI^{(i_a)})\\
                               = & -\left( \sum_{s\in\ttI}\alpha_s r_s - \sum_{s\geq i_1}\alpha_s r + \dots + \sum_{s\in\ttI}\alpha_s r_s - \sum_{s\geq i_a}\alpha_s r\right)\\
                               = & -a\sum_{s\in\ttI}\alpha_s r_s+r\left(\sum_{s\geq i_1}\alpha_s+\dots+\sum_{s\geq i_a}\alpha_s\right).
\end{align*}
Then define
\begin{align}\label{def-R_I}
& \rttI(l)\doteqdot\sum_{s\geq l, s\in\ttI}\alpha_s \text{ for }l\in\ttI\text{ and } \rttI(r)\doteqdot0,\nonumber\\
& \rttI=\rttI(\filtrationcal;\varphi)\doteqdot\max_{i_1,\dots,i_a\in\ttIbar}\left\{ \rttI(i_1)+\dots+\rttI(i_a) \,|\, \rest{\varphi}{(\ecal_{i_1}\otimes\dots\otimes\ecal_{i_a})^{\oplus b}}\not\equiv0\right\}
\end{align}
and finally fix, for any $i\in\ttI$, the following quantities
\begin{align}\label{eq-def-costanti-ci-grandi-e-piccole}
& C_i\doteqdot r_i\hilpol{\ecal}-r\hilpol{\ecal_i}-a r_i,\\
& c_i\doteqdot r_i\deg\ecal-r\deg\ecal_i-a r_i.
\end{align}
Therefore the semistability condition (\ref{eq-def-ss}) is equivalent to the following
\begin{equation}\label{eq-def-ss-con-ci}
 \sum_{i\in\ttI} \alpha_i C_i + r\delta\rttI\ggeq0,
\end{equation}
while the slope semistability condition (\ref{eq-def-slope-ss}) is equivalent to the following
\begin{equation}\label{eq-def-slope-ss-con-ci}
 \sum_{i\in\ttI} \alpha_i c_i + r\deltabar\rttI\ggeq0.
\end{equation}
Sometimes, for convenience's sake, we will write $\rttI(i_1,\dots,i_a)$ instead of $\rttI(i_1)+\dots+\rttI(i_a)$.
\subsection{Others notions of semistabilities}\label{sec-others-notions-ss}
Decorated sheaves provide a useful a machinery to study principal bundles or more generally vector bundles with additional structures. However in general it is quite hard to check semistability for decorated sheaves because one has to verify inequality \eqref{eq-def-ss} and therefore calculate $\muttI$ for any filtration. For this reason we introduce two notions of semistability for decorated sheaves which are more computable: \fr-semistability and $\kkvoid$-semistability. \fr-semistability is quite similar to the semistability condition for framed sheaves given by Huybrechts and Lehn in \cite{HL1} while $\kkvoid$-semistability is the usual semistability restricted to subsheaves insted of filtrations. We will prove that \fr-semistability is stronger than the usual one while $\kkvoid$-semistability is weaker (Proposition \ref{prop-eps-implica-ss-implica-k}).\\

Let $(\ecal,\varphi)$ be as before and let $\fcal$ be a subsheaf of $\ecal$ then define
\begin{equation}
 \epss{\fcal}=\eps(\fcal,\varphi)\doteqdot\begin{cases}
                                   1 \text{ if } \rest{\varphi}{\fcal\ab}\not\equiv0\\
                                   0 \text{ otherwise,}
                                  \end{cases}
 \end{equation}
and
\begin{equation}\label{eq-def-kk}
  \kk{\fcal}{\ecal}=\kkvoid(\fcal,\ecal,\varphi)\doteqdot\begin{cases}
                                                          a \text{ if } \rest{\varphi}{\fcal\ab}\not\equiv0\\
                                                          a-s \text{ if } \rest{\varphi}{\fcal^{\diamond(a-s)}\diamond\ecal^{\diamond s}}\not\equiv0 \text{ and }\rest{\varphi}{\fcal^{\diamond(a-s+1)}\diamond\ecal^{\diamond s-1}}\equiv0\\
                                                          0 \text{ otherwise,} 
                                                         \end{cases}
\end{equation}
Here with the notation $\fcal^{\diamond(a-s)}\diamond\ecal^{\diamond s}$ we mean any tensor product between $\ecal$ and $\fcal$ where $\ecal$ appears exactly $s$-times while $\fcal$ appears $a-s$-times, and when we write $\rest{\varphi}{(\fcal^{\diamond a-s}\diamond\ecal^{\diamond s})^\oplus b}\not\equiv0$ we mean that there exists at least one tensor product between $\fcal$ and $\ecal$ over which $\varphi$ is not identically zero (see Remark \ref{rem-diamond} point $(3)$).\\

\begin{definition}[\textbf{$\eps$-semistability, $\kkvoid$-semistability}]
Let $(\ecal,\varphi)$ be a decorated sheaf; we will say that $(\ecal,\varphi)$ is $\mathbf{\eps}$-(semi)stable, slope $\mathbf{\eps}$-(semi)stable, $\mathbf{\kkvoid}$-(semi)stable or slope $\mathbf{\kkvoid}$-(semi)stable if and only if for any subsheaf $\fcal$ the following inequalities hols:
\begin{align}
& \mathbb{\eps}\textbf{-(semi)stable } \quad & \hilpolred{\fcal}-\frac{a\delta\epss{\fcal}}{\rk{\fcal}} & \ppreceq\hilpolred{\ecal}-\frac{a\delta}{\rk{\ecal}}\label{eq-def-eps-ss}\\
& \textbf{slope }\mathbb{\eps}\textbf{-(semi)stable } \quad & \mu(\fcal)-\frac{a\deltabar\epss{\fcal}}{\rk{\fcal}} & \lleq\mu(\ecal)-\frac{a\deltabar}{\rk{\ecal}}\label{eq-def-slope-eps-ss}\\
& \mathbb{\kkvoid}\textbf{-(semi)stable } \quad & \hilpolred{\fcal}-\frac{\delta\kk{\fcal}{\ecal}}{\rk{\fcal}} & \ppreceq\hilpolred{\ecal}-\frac{a\delta}{\rk{\ecal}}\label{eq-def-k-ss}\\
& \textbf{slope }\mathbb{\kkvoid}\textbf{-(semi)stable } \quad & \mu(\fcal)-\frac{\deltabar\kk{\fcal}{\ecal}}{\rk{\fcal}} & \lleq\mu(\ecal)-\frac{a\deltabar}{\rk{\ecal}}\label{eq-def-slope-k-ss}
\end{align}
where $\hilpolred{\fcal}\doteqdot\frac{\hilpol{\fcal}}{\rk{\fcal}}$ is the reduced Hilbert polynomial.
\end{definition}

The above conditions are equivalent to the following:
   \begin{align*}
   (\ref{eq-def-eps-ss})\quad&\Leftrightarrow\quad \hilpol{\ecal}\rk{\fcal} - \rk{\ecal}\hilpol{\fcal} + a \delta (\rk{\ecal}\epss{\fcal}-\rk{\fcal})&\ssucceq0\\
   (\ref{eq-def-slope-eps-ss})\quad&\Leftrightarrow\quad \deg\ecal\rk{\fcal} - \rk{\ecal}\deg\fcal + a \deltabar (\rk{\ecal}\epss{\fcal}-\rk{\fcal})&\ggeq0\\
   (\ref{eq-def-k-ss})\quad&\Leftrightarrow\quad \hilpol{\ecal}\rk{\fcal} - \rk{\ecal}\hilpol{\fcal} + \delta (\rk{\ecal}\kk{\fcal}{\ecal}-a\rk{\fcal})&\ssucceq0\\
   (\ref{eq-def-slope-k-ss})\quad&\Leftrightarrow\quad \deg\ecal\rk{\fcal} - \rk{\ecal}\deg\fcal + \deltabar (\rk{\ecal}\kk{\fcal}{\ecal}-a\rk{\fcal})&\ggeq0.
   \end{align*}
Moreover note that $\kkvoid$-(semi)stability is equivalent to the usual (semi)stability applied to filtrations of length one. In fact let $\fcal$ be a subsheaf of $\ecal$ and consider the filtration $0\subset\fcal\subset\ecal$ with weight vector $\alphafil=\underline{1}$. An easy calculation shows that
\begin{align*}
 P(0\subset\fcal\subset\ecal,\underline{1})\,+\, & \delta\,\mu(0\subset\fcal\subset\ecal,\underline{1};\varphi)\,=\\
= & \,\hilpol{\ecal}\rk{\fcal}-\rk{\ecal}\hilpol{\fcal}\, +\, \delta\, (\rk{\ecal}\kk{\fcal}{\ecal}-a\rk{\fcal}).
\end{align*}

More precisely, these three notions of semistability are related in the following way:

\begin{proposition}\label{prop-eps-implica-ss-implica-k}
 $\eps$-(semi)stable $\Rightarrow$ (semi)stable $\Rightarrow$ $\kkvoid$-(semi)stable.
\end{proposition}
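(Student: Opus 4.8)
This proposition is the conjunction of two implications, which I would treat separately; the second is almost a tautology once one restricts to weighted filtrations of length one, while the first carries all the content. For ``(semi)stable $\Rightarrow$ $\kkvoid$-(semi)stable'', given any subsheaf $\fcal\subset\ecal$ I would apply the defining inequality \eqref{eq-def-ss} to the filtration $0\subset\fcal\subset\ecal$ with weight vector $\alphafil=\underline{1}$. As recorded just before the statement,
\[
\pttI(\filtrationcal)+\delta\,\muttI(\filtrationcal;\varphi)=\hilpol{\ecal}\rk{\fcal}-\rk{\ecal}\hilpol{\fcal}+\delta\bigl(\rk{\ecal}\kk{\fcal}{\ecal}-a\rk{\fcal}\bigr),
\]
and the right-hand side is exactly the expression controlling $\kkvoid$-(semi)stability \eqref{eq-def-k-ss}. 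Hence (semi)stability, which requires the left-hand side to be $\ssucceq0$ for \emph{every} weighted filtration, forces the $\kkvoid$-inequality for every $\fcal$. The one point to verify is that on this filtration the minimization defining $\muttI$ returns $\rk{\ecal}\kk{\fcal}{\ecal}-a\rk{\fcal}$: writing $\ttIbar=\{\rk{\fcal},\rk{\ecal}\}$ and using $\gammattI^{(\rk{\fcal})}=\rk{\fcal}-\rk{\ecal}<\rk{\fcal}=\gammattI^{(\rk{\ecal})}$, the minimum is attained by taking as many tensor factors equal to $\fcal$ as the nonvanishing of $\varphi$ permits, and that number is precisely $\kk{\fcal}{\ecal}$ by \eqref{eq-def-kk}.

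For ``$\eps$-(semi)stable $\Rightarrow$ (semi)stable'' I would fix an arbitrary weighted filtration $(\filtrationcal)_\ttI$ and work with the form \eqref{eq-def-ss-con-ci} of the condition. Writing $r=\rk{\ecal}$ and $r_i=\rk{\ecal_i}$, applying $\eps$-(semi)stability to each (proper, nonzero, saturated) member $\ecal_i$ gives
\[
\hilpol{\ecal}\,r_i-r\,\hilpol{\ecal_i}\;\ssucceq\;a\,\delta\bigl(r_i-r\,\epss{\ecal_i}\bigr),\qquad i\in\ttI .
\]
Multiplying by $\alpha_i>0$, summing over $\ttI$, and inserting the identity $\muttI=-a\sum_{s\in\ttI}\alpha_s r_s+r\,\rttI$ recorded above, the terms $a\sum_i\alpha_i r_i$ cancel and I would obtain
\[
\pttI(\filtrationcal)+\delta\,\muttI(\filtrationcal;\varphi)\;\ssucceq\;\delta\cdot r\Bigl(\rttI-a\sum_{i\in\ttI}\alpha_i\,\epss{\ecal_i}\Bigr).
\]
Everything therefore reduces to the combinatorial inequality $\rttI\geq a\sum_{i\in\ttI}\alpha_i\,\epss{\ecal_i}$: once it is available, the positivity of the leading coefficient of $\delta$ makes the right-hand side $\succeq0$ and settles the semistable case, while in the stable case each instance of $\eps$-stability is strict and $\ttI\neq\emptyset$, so the summed inequality is strict and yields $\pttI+\delta\muttI\succ0$.

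To establish $\rttI\geq a\sum_{i}\alpha_i\epss{\ecal_i}$ I would exploit the monotonicity of the nonvanishing loci of $\varphi$: since $\ecal_i\subseteq\ecal_{i'}$ gives $(\ecal_i^{\otimes a})^{\oplus b}\subseteq(\ecal_{i'}^{\otimes a})^{\oplus b}$, the relation $\epss{\ecal_i}=1$ propagates to all larger indices, so that $\{i\in\ttI:\epss{\ecal_i}=1\}=\{i\in\ttI:i\geq i_0\}$ for a smallest such index $i_0$ (when this set is nonempty). Consequently $a\sum_i\alpha_i\epss{\ecal_i}=a\,\rttI(i_0)$, and since $\epss{\ecal_{i_0}}=1$ means $\varphi$ is nonzero on $(\ecal_{i_0}^{\otimes a})^{\oplus b}$, the admissible choice $i_1=\dots=i_a=i_0$ in the maximum \eqref{def-R_I} defining $\rttI$ gives $\rttI\geq a\,\rttI(i_0)$, which is what is needed. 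I expect the genuine obstacle of the whole argument to be exactly this last step: keeping straight the translation between the $\gamma/\mu$ description and the $\rttI$ description, and using the monotone behaviour of $\varphi$ on nested subsheaves. The degenerate case $\varphi\equiv0$, in which $\epss{\ecal_i}\equiv0$ and $\rttI=0$ so that both sides collapse, should be dispatched separately at the outset.
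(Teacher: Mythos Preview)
Your proposal is correct and follows essentially the same route as the paper's proof. The only cosmetic difference is that the paper works directly with the $\gammattI$-description of $\muttI$ (bounding $\muttI\geq -a\,\gammattI^{(j_0)}$ for the minimal $j_0$ with $\epss{\ecal_{j_0}}=1$), whereas you translate everything into the $\rttI$-formulation \eqref{eq-def-ss-con-ci} and prove the equivalent inequality $\rttI\geq a\,\rttI(i_0)$; since $\gammattI^{(j)}=\sum_i\alpha_i r_i-r\,\rttI(j)$, these are the same computation. Your explicit verification of $\muttI$ on the length-one filtration and your handling of the strict/stable case are a bit more detailed than the paper's, but add no new idea.
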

\begin{proof}
 Let $(\ecal,\varphi)$ be a $\eps$-(semi)stable decorated sheaf of rank $\rk{\ecal}=r$, let $(\filtrationcal)$ be a weighted filtration indexed by $\ttI$ and let $r_i$ be the rank of $\ecal_i$. For any $i\in\ttI$,
\begin{equation*}
 \hilpol{\ecal} r_i - r \hilpol{\ecal_i} + a \delta (r\epss{\ecal_i} - r_i)\ssucceq0,
\end{equation*}
therefore
\begin{equation*}
 \sum_{i\in\ttI}\alpha_i\left( \hilpol{\ecal}r_i - r\hilpol{\ecal_i} + a\delta(r\epss{\ecal_i} - r_i)\right)=\pttI+a\delta(r\sum_{i\in\ttI}\alpha_i\epss{\ecal_i} - \sum_{i\in\ttI}\alpha_ir_i)\ssucceq0.
\end{equation*} 
We want to show that $\pttI+\delta\muttI\ssucceq0$. Denote by $\eps_i\doteqdot\epss{\ecal_i}$ and let $j_0\doteqdot\min\{k\in\ttI\,|\,\eps_k\neq 0\}$.
%$L$ and $R$ be respectively the cardinality of the set $\{k\in\ttI\,|\, k<j_0\}$ and the set $\{k\in\ttI\,|\, k>j_0\}$, so that $L+R+1=|\ttI|$ and $R+1=\sum_{i\in\ttI}\eps_i=\sum_{i\in\ttI,i\geq j_0}\eps_i$.
Therefore
\begin{align*}
 \mu_\ttI & = -\min\{\gammattI^{(i_1)}+\dots+\gammattI^{(i_a)}\,|\,\rest{\varphi}{(E_{i_1}\otimes\dots\otimes E_{i_a})^{\oplus b}}\neq0\}\\
          & \geq -a \gammattI^{(j_0)}\\
          & = a\left(\sum_{i\geq j_0,i\in\ttI}\alpha_i r-\sum_{i\in\ttI}\alpha_i r_i\right)\\
          & = a\left(\sum_{i\in\ttI}\alpha_i \eps_i r-\sum_{i\in\ttI}\alpha_i r_i\right)\\
          & = a\left(\sum_{i\in\ttI} \alpha_i(\eps_i r-r_i)\right).
\end{align*}
So 
\begin{align*}
\pttI+\delta\muttI\ssucceq & \pttI+a\delta\left(\sum_{i\in\ttI} \alpha_i(\eps_i r-r_i)\right)=\\
= & \sum_{i\in\ttI}\alpha_i\left( \hilpol{\ecal}r_i - r\hilpol{\ecal_i} + a\delta(r\epss{\ecal_i} - r_i)\right)\ssucceq0, 
\end{align*}
and we are done.\\

Finally, given a (semi)stable decorated sheaf, we want to show that is $\kkvoid$-(semi)stable. Let $\fcal$ be a subsheaf of $\ecal$ of rank $r_{\fcal}$; if we consider the filtration $0\subset\fcal\subset\ecal$ with weights identically $1$, after small calculation ones get that
\begin{equation*}
0\ppreceq P(0\subset\fcal\subset\ecal,\underline{1})+\delta\mu(0\subset\fcal\subset\ecal,\underline{1};\varphi)=\hilpol{\ecal}r_{\fcal}-r\hilpol{\fcal}+\delta(r\kk{\fcal}{\ecal}-a r_{\fcal})
\end{equation*}
and we have done.
\end{proof}

Note that $\mu(\filtrationcal;\varphi)$ is not additive for all filtrations, i.e., it is not always true that 
\begin{equation}\label{eq-mu-additivity}
 \mu(\filtrationcal;\varphi)=\sum_{i\in\ttI}\mu(0\subset\ecal_i\subset\ecal,\alpha_i;\varphi).
\end{equation}

We will call \textbf{non-critical} a filtration for which (\ref{eq-mu-additivity}) holds and \textbf{critical} otherwise. Finally we will say that $\varphi$ is \textbf{additive} if equality (\ref{eq-mu-additivity}) holds for any weighted filtration, i.e., there are no critical filtrations.

\begin{remark}\label{rem-ss-uguale-kss-piu-condition-su-filtr-critiche}
\begin{enumerate}
 \item It easy to see that for any filtration (indexed by $\ttI$)
\begin{equation*}
\muttI(\filtrationcal;\varphi)\leq\sum_{i\in\ttI}\mu(0\subset\ecal_i\subset\ecal,\alpha_i;\varphi). 
\end{equation*}
 Therefore any subfiltration of a non-critical one is still non-critical. Indeed suppose that $\efilcal$ is a non critical filtration indexed by $\ttI$ and $\ttJ\subset\ttI$ indexes a critical subfiltration of $\efilcal$. Then $\muttI(\filtrationcal;\varphi)=\sum_{i\in\ttI}\mu(0\subset\ecal_i\subset\ecal,\alpha_i;\varphi)$ (since the whole filtration is non critical) and $\mu_{\ttJ}(\efilcal,\alphafil;\varphi)<\sum_{i\in\ttJ}\mu(0\subset\efilcal_i\subset\efilcal,\alpha_i;\varphi)$. Therefore $\mu_{\ttI\smallsetminus\ttJ}(\efilcal,\alphafil;\varphi)>\sum_{i\in\ttI\smallsetminus\ttJ}\mu(0\subset\ecal_i\subset\ecal,\alpha_i;\varphi)$ which is absurd.
 \item If $\varphi$ is additive $\kkvoid$-(semi)stability implies (semi)stability and therefore the two conditions are equivalent
 \item Checking semistability conditions over non-critical filtrations is the same to check them over subbundles.
 \item Thanks to the previous considerations, the following conditions are equivalent:
\begin{enumerate}
 \item $(\ecal,\varphi)$ is $\delta$-(semi)stable;
 \item For any subsheaf $\fcal$ and for any \underline{critical} filtration $(\filtrationcal)$ the following inequalities hold 
 \begin{align*}
0 & \ppreceq (\rk{\fcal}\hilpol{\ecal}-r\hilpol{\fcal})-\delta(r\kk{\fcal}{\ecal}-a\rk{\fcal}),\\
0 & \ppreceq P(\filtrationcal)+\delta\,\mu(\filtrationcal;\varphi).
 \end{align*}
\end{enumerate}
Observe that the first part of condition $(2)$ is just requiring that $(E,\varphi)$ is $\kkvoid$-(semi)stable.
 \item Note that Proposition \ref{prop-eps-implica-ss-implica-k} and points $2$, $3$ and $4$ above hold also for slope semistability.
\end{enumerate}
\end{remark}
\subsection{Decorated coherent sheaves}\label{sec-dec-coherent-sheaves}
With the expression ``\textbf{decorated coherent sheaf}'' we mean a decorated sheaf $(\acal,\varphi)$ such that $\acal$ is just a coherent sheaf (and not necessarily torsion free).\\
Before proceeding we recall what a decorated coherent \textit{subsheaf} is. If $i\colon(\fcal,\psi)\to(\acal,\varphi)$ is an injective morphism of decorated sheaves we get immediately from condition \eqref{eq-def-morphism-dec} that $\lambda\cdot\psi=i^*\varphi$. From now on we will say that the tripe $((\fcal,\psi),i)$ is a \textbf{decorated subsheaf} of $(\acal,\varphi)$ and we will denote it just by $(\fcal,\rest{\varphi}{\fcal})$. Note moreover that, if $\fcal$ is a subsheaf of $\acal$ and $i:\fcal\to\acal$ is the inclusion, then it defines a decorated subsheaf; in fact defining $\psi=\rest{\varphi}{i\ab(\fcal\ab)}$ the triple $((\fcal,\psi),i)$ is a decorated subsheaf of $(\acal,\varphi)$.\\

For an arbitrary decorated coherent sheaf $(\acal,\varphi)$ define the \textbf{\fr-decorated degree}
\begin{equation*}
\deg(\acal,\varphi)\doteqdot\deg(\acal)-a\deltabar \eps(\acal,\varphi),
\end{equation*}
where $\deg\acal\doteqdot c_1(\acal)\cdot\amplebu{1}^{n-1}$, and the \textbf{\fr-decorated Hilbert polynomial}
\begin{equation*}
\hilpol{(\acal,\varphi)}(m)\doteqdot \hilpol{\acal}(m)-a\delta(m)\eps(\acal,\varphi)
\end{equation*}
If moreover rank $\rk{\acal}>0$ we define the \textbf{\fr-decorated slope} and, respectively, the \textbf{reduced \fr-decorated Hilbert polynomial} as:
\begin{equation*}
 \mu(\acal,\varphi)\doteqdot\frac{\deg(\acal,\varphi)}{\rk{\acal}} \qquad\qquad \hilpolred{(\acal,\varphi)}(m)\doteqdot\frac{\hilpol{(\acal,\varphi)}(m)}{\rk{\acal}}.
\end{equation*}
Sometimes, if the morphism $\varphi$ is clear from the context, we will write $\ddeg(\acal)$ instead of $\deg(\acal,\varphi)$, $\dmu(\acal)$ instead of $\mu(\acal,\varphi)$ and $\hilpol{\acal}^\eps$ (respectively $\hilpolred{\acal}^\eps$) instead of $\hilpol{(\acal,\varphi)}$ (resp. $\hilpolred{(\acal,\varphi)}$).\\

\begin{definition}\label{def-eps-semistable}
Let $(\acal,\varphi)$ be a decorated coherent sheaf of positive rank, than we will say that $(\acal,\varphi)$ is \textbf{\fr-(semi)stable} or, respectively, \textbf{slope \fr-(semi)stable} with respect to $\delta$ (resp. $\deltabar$) if and only if for any proper non trivial subsheaf $\fcal\subset\acal$ the following inequality holds:
\begin{equation}\label{eq-def-eps-semista-dec-sheaf}
 \hilpol{(\fcal,\rest{\varphi}{\fcal})}\,\rk{\acal}\ppreceq\hilpol{(\acal,\varphi)}\,\rk{\fcal}.\\
\end{equation}
or, respectively,
\begin{equation}
\deg(\fcal,\varphi) \,\rk{\acal}\lleq\rk{\fcal} \, \deg(\acal,\varphi)
\end{equation}
If $\rk{\acal}=0$ we say that $(\acal,\varphi)$ is semistable (resp. slope semistable) or stable (resp. slope stable) if moreover $\hilpol{\acal}=\delta$ (resp. $\deg\acal=\deltabar$).
\end{definition}
In particular this \fr-(semi)stability extends $\eps$-semistability, defined in Section \ref{sec-others-notions-ss}, to decorated \textit{coherent} sheaves.

\begin{remark}
 Note that
\begin{center}
 slope \fr-stable $\Rightarrow$ \fr-stable $\Rightarrow$ \fr-semistable $\Rightarrow$ slope \fr-semistable;
\end{center}
and recall that \fr-semistability (slope \fr-semistability) is \textit{strictly stronger} than the usual semistability (resp. slope semistability) introduced in Section \ref{sec-semistability-conditions}.
\end{remark}

The kernel of $\varphi$ lies in $\acal\ab$, so for our purpose we need to define a subsheaf of $\acal$ that plays a similar role to the kernel of $\varphi$. Therefore we let 
\begin{equation*}
\kker\doteqdot\max\left\{0\subseteq\fcal\subseteq\acal \;|\; \fcal\ab\subseteq\ker\varphi\right\}
\end{equation*}
where the maximum is taken with respect to the partial ordering given by the inclusion of sheaves. Note that $\kker$ is unique, indeed if $\kker'$ is another maximal element, then $\kker\cup\kker'$ is a subsheaf of $\acal$ such that $(\kker\cup\kker')\ab\subset\ker\varphi$ and this is absurd.

\begin{remark}\label{rem-semistabilita}
\begin{enumerate}
 \item\label{c} Let $T(\acal)$ be the torsion part of $\acal$. The torsion part $T(\acal\ab)$ lies in $\ker\varphi$, otherwise there would be a non zero morphism between a sheaf of pure torsion and the torsion free sheaf $\linebu$ and this is impossible. Therefore also the twisted torsion part $T(\acal)\ab\subset T(\acal\ab)$ lies in the kernel of $\varphi$.
 \item $T(\acal)\subseteq\ker\varphi$ (point \eqref{c}) therefore $T(\acal)\subseteq\kker$;
 \item\label{b} $\acal$ is torsion free if and only if $\kker$ is torsion free. Indeed, suppose that $\kker$ is torsion free and that $T(\acal)\neq\emptyset$, then $\kker\subsetneq\kker\cup T(\acal)$ which is absurd for maximality of $\kker$. The converse is obvious.
 \item\label{a} $\acal\ab$ is torsion free if and only if $\ker\varphi$ is torsion free.
 \item If $(\acal,\varphi)$ is semistable and $\rk{\acal}>0$ then $\kker$ is a torsion free sheaf. Indeed if $T(\kker)$ is the torsion part of $\kker$, $\rk{T(\kker)}=0$ and than, for the semistability condition, we get that:
\begin{equation*}
\rk{\acal}\hilpol{(T(\kker)\ab,\rest{\varphi}{T(\kker)\ab})}\ppreceq 0.
\end{equation*}
Therefore $T(\kker)$ is zero and $\kker$ is torsion free.
\item If $(\acal,\varphi)$ is semistable and $\rk{\acal}>0$ then $\acal$ is pure of dimension $\dim X$ and therefore torsion free. Indeed let $\fcal$ a subsheaf of $\acal$ of pure torsion, then by the semistability condition we get that
\begin{equation*}
\rk{\acal}\left(\hilpol{\fcal}-a\delta\eps(\rest{\varphi}{\fcal})\right)\preceq\rk{\fcal}\left(\hilpol{\acal}-a\delta\right)=0.
\end{equation*}
Moreover, for point (1), $\fcal\ab\subset\ker\varphi$ and so $\hilpol{\fcal}\preceq0$, this immediately implies $\fcal=0$.
\end{enumerate}
\end{remark}

\begin{remark}
 Note that Remark \ref{rem-semistabilita} holds also in the slope \fr-semistable case.
\end{remark}

\begin{proposition}\label{prop-A-ss-sse-A/T(A)-ss}
 Let $(\acal,\varphi)$ be a decorated coherent sheaf of positive rank and let $T\doteqdot T(\acal)$ be the torsion of $\acal$. Then the following statements hold.
\begin{enumerate}
 \item $(\acal,\varphi)$ is \fr-semistable with respect to $\delta$ $\Longrightarrow$ $\acal/T$ is \fr-semistable with respect to $\delta$.
 \item $(\acal/T,\varphi)$ is \fr-semistable with respect to $\delta$ $\Longrightarrow$ $(\acal,\varphi)$ is \fr-semistable with respect to $\delta$ or $T$ is the maximal destabilizing subsheaf of $\acal$ in the sense of Remark \ref{rem-maximal-dest-sheaf-for-coherent}.
\end{enumerate}
\begin{proof}
 First of all note that, since $T\subset\ker\varphi$, the pair $(\acal/T,\varphi)$ is a well-defined decorated (torsion free) sheaf of the same type of $(\acal,\varphi)$.\\
\begin{enumerate}
 \item If $(\acal,\varphi)$ \fr-semistable with respect to $\delta$, then as in Remark \ref{rem-semistabilita} one can prove that $T=0$ and so obviously $(\acal/T,\varphi)$ is semistable.
 \item Suppose that $(\acal/T,\varphi)$ \fr-semistable with respect to $\delta$. If $T$ does not destabilize, then $\hilpol{T}^\eps=\hilpol{T}\preceq 0$ and so $T=0$ and $(\acal,\varphi)$ is \fr-semistable. Otherwise $\hilpol{T}^\eps=\hilpol{T}\succeq 0$ and Remark \ref{rem-maximal-dest-sheaf-for-coherent} shows that is the maximal destabilizing subsheaf.
\end{enumerate}
\end{proof}
\end{proposition}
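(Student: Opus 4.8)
The plan is to test the $\eps$-semistability inequality of Definition \ref{def-eps-semistable} on the single subsheaf $\fcal=T$, and to read off both implications from the two features that make $T$ special: it has rank $0$ and it is annihilated by $\varphi$. As a preliminary bookkeeping step I would record the numerics of the torsion sequence $0\to T\to\acal\to\acal/T\to0$: here $\rk{T}=0$, $\rk{\acal/T}=\rk{\acal}$ and $\hilpol{\acal}=\hilpol{T}+\hilpol{\acal/T}$. By Remark \ref{rem-semistabilita} one has $T\ab\subset T(\acal\ab)\subset\ker\varphi$, so $\rest{\varphi}{T\ab}\equiv0$, i.e.\ $\epss{T}=0$ and hence $\hilpol{(T,\rest{\varphi}{T})}=\hilpol{T}$; the same inclusion lets $\varphi$ descend to $\acal/T$, making $(\acal/T,\varphi)$ a genuine decorated torsion-free sheaf of the same type.

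For part (1) I would insert $\fcal=T$ into \eqref{eq-def-eps-semista-dec-sheaf}. The right-hand side is $\hilpol{(\acal,\varphi)}\rk{T}=0$ and, since $\epss{T}=0$, the left-hand side equals $\hilpol{T}\rk{\acal}$; thus $\hilpol{T}\rk{\acal}\preceq0$. As $\rk{\acal}>0$ and the Hilbert polynomial of a nonzero coherent sheaf is eventually strictly positive, this forces $T=0$, so $\acal/T=\acal$ is $\eps$-semistable. This is the coherent-sheaf analogue of Remark \ref{rem-semistabilita}(6), and I would simply transcribe that computation.

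For part (2), assuming $(\acal/T,\varphi)$ $\eps$-semistable, I would dichotomize on whether $T$ destabilizes $\acal$. Reading the computation of part (1) backwards, the inequality \eqref{eq-def-eps-semista-dec-sheaf} for $\fcal=T$ holds precisely when $\hilpol{T}\preceq0$, that is, when $T=0$. Hence if $T$ does not destabilize then $T=0$ and $\acal=\acal/T$ is $\eps$-semistable; otherwise $T\neq0$, $\hilpol{T}\succ0$, and $T$ is a genuine destabilizing subsheaf. It then remains to upgrade ``destabilizing'' to ``maximal destabilizing'' in the sense of Remark \ref{rem-maximal-dest-sheaf-for-coherent}: I would verify that $T$ satisfies the defining requirement of the maximal destabilizer, namely that $T$ destabilizes while the quotient $\acal/T$ is $\eps$-semistable — which is exactly our standing hypothesis — and then invoke the uniqueness in that remark to identify $T$ as \emph{the} maximal destabilizing subsheaf.

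The only step carrying real content is this last maximality claim: establishing that the rank-zero sheaf $T$ sits at the very top of the ordering of destabilizing subsheaves, rather than being just one of them. This is where I would lean on Remark \ref{rem-maximal-dest-sheaf-for-coherent}, in which a rank-$0$ subsheaf with positive Hilbert polynomial is assigned, by convention, maximal reduced $\eps$-Hilbert polynomial; concretely, the characterization ``$T$ destabilizes and $\acal/T$ is semistable'' pins $T$ down uniquely as the first term of the Harder--Narasimhan-type filtration. Everything else reduces to elementary Hilbert-polynomial algebra driven by the two identities $\rk{T}=0$ and $\epss{T}=0$.
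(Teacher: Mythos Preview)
Your proof follows the paper's approach exactly: both parts reduce to testing inequality \eqref{eq-def-eps-semista-dec-sheaf} on the single subsheaf $\fcal=T$, using $\rk{T}=0$ and $\epss{T}=0$ to collapse it to $\hilpol{T}\preceq 0$, and the maximality claim in part~(2) is deferred to Proposition~\ref{rem-maximal-dest-sheaf-for-coherent} just as in the paper. One caution: the clause ``$T$ destabilizes while the quotient $\acal/T$ is $\eps$-semistable'' is \emph{not} the defining property of the maximal destabilizing subsheaf, so that gloss should be dropped; the correct reason --- which you also state --- is that a rank-zero $T$ with $\hilpol{T}\succ 0$ trivially satisfies $\hilpol{T}^\eps\,\rk{\wcal}\succeq\hilpol{\wcal}^\eps\,\rk{T}=0$ for every subsheaf $\wcal$, and that is exactly what the cited remark records.
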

\section[Mehta-Ramanathan for slope \fr-semistability]{Mehta-Ramanathan for slope $\eps$-semistable decorated sheaves}\label{sec-mehta-ramanathan}
In this section we want to prove a Mehta-Ramanathan theorem for slope \fr-semistable decorated sheaves. Before we proceed we need some notation and preliminary results.\\

\begin{notation}
 Let $k$ be an algebraic closed field of characteristic $0$, $S$ an integral $k$-scheme of finite type. $X$ will be a smooth projective variety over $k$, $\amplebu{1}$ an ample line bundle on $X$ and $f:X\to S$ a projective flat morphism. Note that $\amplebu{1}$ is also $f$-ample. In this section, unless otherwise stated, we will suppose that any decorated sheaf is of type $(a,b,\linebu)$. If $(\wcal,\varphi)$ is a decorated coherent sheaf over $X$ we denote by $\wcal_s$ the restriction $\rest{\wcal}{X_s}$, where $X_s\doteqdot f^{-1}(s)$, and $\varphi_s$ the restriction $\rest{\varphi}{\wcal_s}$. Finally, if $\fcal$ is a sheaf, we will denote by $\rkk{\fcal}$ the quantity $\rk{\fcal}$.\\
\end{notation}
\subsection{Maximal destabilizing subsheaf}\label{maximal-dest-subsheaf}
\begin{proposition}\label{prop-maximal-dest-subsheaf}
 Let $(\ecal,\varphi)$ be a decorated sheaf over a nonsingular projective smooth variety $X$. If $(\ecal,\varphi)$ is not \fr-semistable there is a unique, \fr-semistable, proper subsheaf $\fcal$ of $\ecal$ such that:
\begin{enumerate}
 \item $\hilpolred{\fcal}^\eps\succeq\hilpolred{\wcal}^\eps$ for all subsheaf $\wcal$ of $\ecal$.
 \item If $\hilpolred{\fcal}^\eps=\hilpolred{\wcal}^\eps$ then $\wcal\subset\fcal$.
\end{enumerate}
The subsheaf $\fcal$, with the induced morphism $\rest{\varphi}{\fcal}$, is called \textbf{maximal destabilizing subsheaf} of $(\ecal,\varphi)$.
\end{proposition}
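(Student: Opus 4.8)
The plan is to mimic the classical construction of the maximal destabilizing (Harder–Narasimhan) subsheaf for ordinary torsion-free sheaves, adapting it to the $\eps$-decorated Hilbert polynomial $\hilpol{(\cdot,\varphi)}^\eps$. The key objects to track are subsheaves $\wcal\subseteq\ecal$ together with their induced decorations $\rest{\varphi}{\wcal}$, ranked by their reduced $\eps$-decorated Hilbert polynomials $\hilpolred{\wcal}^\eps$. I would first show that among all subsheaves there is a well-defined \emph{maximal} value of $\hilpolred{\wcal}^\eps$ with respect to the ordering $\preceq$, and then show that a subsheaf attaining this maximal value, and of maximal rank among those attaining it, is unique and has the stated properties.

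First I would establish boundedness so that the supremum is attained. Since $\ecal$ is torsion free of fixed rank on the projective variety $X$, the usual Grothendieck-type boundedness (via the Mumford–Castelnuovo regularity / the fact that slopes of subsheaves are bounded above) guarantees that the set $\{\hilpolred{\wcal}^\eps : 0\neq\wcal\subseteq\ecal\}$ is bounded above for $\preceq$. The extra term $-a\delta\eps(\wcal,\varphi)/\rk{\wcal}$ only shifts each reduced polynomial by a controlled amount ($\eps\in\{0,1\}$ and $\delta$ is fixed of degree $n-1$), so it does not destroy boundedness. Hence a maximal $\hilpolred{\fcal}^\eps$ exists; I would choose $\fcal$ to be of maximal rank among subsheaves attaining it, which also forces $\fcal$ to be saturated.

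The heart of the argument is the \textbf{uniqueness}, via the standard ``sum of two maximal destabilizers'' trick, and this is where the $\eps$-correction makes things delicate. Suppose $\fcal$ and $\wcal$ both attain the maximal reduced value. I would consider $\fcal+\wcal$ and $\fcal\cap\wcal$ and use the exact sequence $0\to\fcal\cap\wcal\to\fcal\oplus\wcal\to\fcal+\wcal\to0$, which gives additivity of Hilbert polynomials and of ranks. The classical convexity inequality then forces $\hilpolred{(\fcal+\wcal)}^\eps$ to be at least the maximal value, hence equal; but here one must check how $\eps$ behaves under $+$ and $\cap$. The crucial point is that $\eps$ is monotone, i.e. $\eps(\fcal+\wcal,\varphi)\geq\max\{\eps(\fcal,\varphi),\eps(\wcal,\varphi)\}$ (if $\varphi$ is nonzero on $\fcal\ab$ it is nonzero on $(\fcal+\wcal)\ab$), while $\eps(\fcal\cap\wcal,\varphi)\leq\min\{\dots\}$; feeding these inequalities into the additivity of ordinary Hilbert polynomials yields $\hilpol{(\fcal,\varphi)}^\eps+\hilpol{(\wcal,\varphi)}^\eps\preceq\hilpol{(\fcal\cap\wcal,\varphi)}^\eps+\hilpol{(\fcal+\wcal,\varphi)}^\eps$, which is exactly the superadditivity needed to run the usual argument. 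Maximality of the value of $\fcal+\wcal$ together with maximality of the rank of $\fcal$ then forces $\wcal\subseteq\fcal$, giving both uniqueness and property $(2)$.

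Finally I would verify that $\fcal$ is itself \textbf{$\eps$-semistable}: if some $\gcal\subsetneq\fcal$ had strictly larger reduced $\eps$-Hilbert polynomial, then $\gcal$ would violate the maximality of $\hilpolred{\fcal}^\eps$ among all subsheaves of $\ecal$, a contradiction; here one uses that semistability of the decorated subsheaf $(\fcal,\rest{\varphi}{\fcal})$ is tested against $\hilpolred{(\cdot,\rest{\varphi}{\fcal})}^\eps$, and that for $\gcal\subseteq\fcal$ the restriction $\rest{\varphi}{\gcal}$ agrees with $\rest{(\rest{\varphi}{\fcal})}{\gcal}$, so the $\eps$-values computed inside $\fcal$ and inside $\ecal$ coincide. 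I expect the \textbf{main obstacle} to be precisely the bookkeeping of the $\eps$-term under sums and intersections: unlike the purely additive Hilbert polynomial, $\eps$ is only a monotone $\{0,1\}$-valued quantity, so one must argue that the inequalities point in the right direction in all four cases ($\eps(\fcal),\eps(\wcal)\in\{0,1\}$) rather than relying on a clean additive identity, and confirm that the non-semistable hypothesis on $(\ecal,\varphi)$ guarantees $\fcal$ is a \emph{proper} subsheaf.
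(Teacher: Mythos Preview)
Your plan is essentially the paper's argument, with one organizational difference worth noting. The paper does not first establish that the supremum of $\hilpolred{\,\cdot\,}^\eps$ is attained; instead it defines the partial order $\fcal_1\preccurlyeq\fcal_2 \Longleftrightarrow \fcal_1\subseteq\fcal_2$ and $\hilpolred{\fcal_1}^\eps\preceq\hilpolred{\fcal_2}^\eps$, uses Zorn's lemma to pick a $\preccurlyeq$-maximal subsheaf of \emph{minimal} rank, and then proves a posteriori (by a second contradiction step inside $\fcal$) that this sheaf dominates every other subsheaf. You instead choose a subsheaf attaining the maximal value of $\hilpolred{\,\cdot\,}^\eps$ and of \emph{maximal} rank among those, which is more direct but requires the boundedness input up front. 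From that point on, both proofs run the identical see-saw on $0\to\fcal\cap\gcal\to\fcal\oplus\gcal\to\fcal+\gcal\to 0$, and both reduce to the same key inequality $\epss{\fcal+\gcal}+\epss{\fcal\cap\gcal}\le\epss{\fcal}+\epss{\gcal}$.

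One warning about that last step: monotonicity of $\eps$ alone does \emph{not} give this inequality. If $\epss{\fcal}=\epss{\gcal}=0$, monotonicity tells you nothing about $\epss{\fcal+\gcal}$, and for $a\ge 2$ one can have $\varphi$ vanishing on $\fcal^{\otimes a}$ and on $\gcal^{\otimes a}$ while being nonzero on some mixed tensor in $(\fcal+\gcal)^{\otimes a}$ (e.g.\ $a=2$, $\varphi$ an alternating pairing with $\fcal,\gcal$ isotropic), so that $\epss{\fcal+\gcal}=1$ and the inequality fails. The paper asserts the same ``$\preceq 0$'' at the end of its computation without further justification, so this is not a discrepancy between your approach and the paper's; but your stated reason (``monotonicity implies superadditivity'') is incomplete, and the case-by-case check you anticipate as the main obstacle really does require separate handling of this configuration.
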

\begin{proof}
 First we recall that by definition $\ecal$ is torsion free and therefore of positive rank.\\

We define a partial ordering on the set of decorated subsheaves of a given decorated sheaf $(\ecal,\varphi)$. Let $\fcal_1,\fcal_2$ two subsheaves of $\ecal$, then
\begin{equation}\label{eq-relaz-ordine}
 \fcal_1\preccurlyeq\fcal_2\quad\Longleftrightarrow\quad\fcal_1\subseteq\fcal_2\quad\wedge\quad\hilpol{(\fcal_1,\varphi_1)}\,\rk{\fcal_2}\,\preceq\,\hilpol{(\fcal_2,\varphi_2)}\,\rk{\fcal_1}\\
\end{equation}
where $\varphi_i=\rest{\varphi}{\fcal_i}$. Note that the set of the subsheaves of a sheaf $\ecal$ with this order relation $\preccurlyeq$ satisfies the hypothesis of Zorn's Lemma, so there exists a maximal element (non unique in general). Let
\begin{equation}
 \fcal\doteqdot\min_{\rk{\gcal}}\{\gcal\subset\ecal \,|\, \gcal \text{ is }\preccurlyeq\text{-maximal}\}
\end{equation}
i.e., $\fcal$ is a $\preccurlyeq$-maximal subsheaf with minimal rank among all $\preccurlyeq$-maximal subsheaves. Then we claim that $(\fcal,\rest{\varphi}{\fcal})$ has the asserted properties.\\

Suppose that exists $\gcal\subset\ecal$ such that 
\begin{equation}\label{eq-destab-max1}
 \hilpolred{\gcal}^\eps\succeq\hilpolred{\fcal}^\eps 
\end{equation}
First we show that we can assume $\gcal\subset\fcal$ by replacing $\gcal$ by $\fcal\cap\gcal$. Indeed if $\gcal\not\subset\fcal$, $\fcal$ is a proper subsheaf of $\fcal+\gcal$ in fact $\fcal\not\subset\gcal$ (otherwise $\fcal\preccurlyeq\gcal$ which is absurd for maximality of $\fcal$). By maximality one has that
\begin{equation}\label{eq-destab-max2}
 \hilpolred{\fcal}^\eps\succ\hilpolred{\fcal+\gcal}^\eps.
\end{equation}
 Using the exact sequence
\begin{equation*}
 0\longrightarrow\fcal\cap\gcal\longrightarrow\fcal\oplus\gcal\longrightarrow\fcal+\gcal\longrightarrow0
\end{equation*}
one finds $\hilpol{\fcal}+\hilpol{\gcal}=\hilpol{\fcal\oplus\gcal}=\hilpol{\fcal\cap\gcal}+\hilpol{\fcal+\gcal}$ and $\rk{\fcal}+\rk{\gcal}=\rk{\fcal\oplus\gcal}=\rk{\fcal\cap\gcal}+\rk{\fcal+\gcal}$. Hence
\begin{equation}\label{eq-proof-destab-max}
 \rkk{\fcal\cap\gcal}(\hilpolred{\gcal}-\hilpolred{\fcal\cap\gcal})=\rkk{\fcal+\gcal}(\hilpolred{\fcal+\gcal}-\hilpolred{\fcal})+(\rkk{\gcal}-\rkk{\fcal\cap\gcal})(\hilpolred{\fcal}-\hilpolred{\gcal}).
\end{equation}
where we denote by $\rkk{\fcal}$, $\rkk{\gcal}$, $\rkk{\fcal+\gcal}$ and $\rkk{\fcal\cap\gcal}$ the rank of $\rk{\fcal}$, $\rk{\gcal}$, $\rk{\fcal+\gcal}$ and $\rk{\fcal\cap\gcal}$ respectively.\\

If the morphism $\varphi$ is zero \fr-semistability coincides with the usual semistability for torsion-free sheaves and the existence of the maximal destibilizing subsheaf is a well-known fact that one can find, for example, in \cite{HLbook} Lemma 1.3.6. So we suppose that $\eps(\varphi)=1$. From the above inequalities between reduced decorated Hilbert polynomial of $\fcal$, $\gcal$ and $\fcal+\gcal$ one can easily obtain:
\begin{align*}
 & \hilpolred{\fcal+\gcal}-\hilpolred{\fcal}\prec a\delta\left(\frac{\epss{\fcal+\gcal}}{\rkk{\fcal+\gcal}}-\frac{\epss{\fcal}}{\rkk{\fcal}}\right)\\
 & \hilpolred{\fcal}-\hilpolred{\gcal}\preceq a\delta\left(\frac{\epss{\fcal}}{\rkk{\fcal}}-\frac{\epss{\gcal}}{\rkk{\gcal}}\right),
\end{align*}
therefore, using equation \eqref{eq-proof-destab-max} and after some easy computations, one gets
\begin{align*}
 \rkk{\fcal\cap\gcal} & (\hilpolred{\gcal}^\eps-\hilpolred{\fcal\cap\gcal}^\eps) = \\ 
 =\; 		& \rkk{\fcal+\gcal}(\hilpolred{\fcal+\gcal}-\hilpolred{\fcal})+(\rkk{\gcal}-\rkk{\fcal\cap\gcal})    (\hilpolred{\fcal}-\hilpolred{\gcal})-a\delta\epss{\gcal}\frac{\rkk{\fcal\cap\gcal}}{\rkk{\gcal}}+a\delta\epss{\fcal\cap\gcal}\\
 \prec\; 	& a\delta\rkk{\fcal\cap\gcal}\left(\frac{\epss{\fcal+\gcal}}{\rkk{\fcal+\gcal}}-\frac{\epss{\fcal}}{\rkk{\fcal}}\right)+a\delta(\rkk{\gcal}-\rkk{\fcal\cap\gcal})\left(\frac{\epss{\fcal}}{\rkk{\fcal}}-\frac{\epss{\gcal}}{\rkk{\gcal}}\right)-a\delta\epss{\gcal}\frac{\rkk{\fcal\cap\gcal}}{\rkk{\gcal}}+a\delta\epss{\fcal\cap\gcal}\\
 =\; 		& a\delta\left( \epss{\fcal+\gcal}-\epss{\fcal}-\epss{\gcal}+\epss{\fcal\cap\gcal}\right)\\
 \preceq\; 	& 0.
\end{align*}
Therefore we can suppose both $\gcal\subset\fcal$ and $\hilpolred{\gcal}^\eps\succeq\hilpolred{\fcal}^\eps$, and, up to replacing $\gcal$, we can suppose that $\gcal$ is maximal in $\fcal$ with respect to $\preccurlyeq$. Let $\gcal'$ be a $\preccurlyeq$-maximal in $\ecal$ among all subsheaves (of $\ecal$) containing $\gcal$. Then
\begin{equation*}
 \hilpolred{\fcal}^\eps\preceq\hilpolred{\gcal}^\eps\preceq\hilpolred{\gcal'}^\eps.
\end{equation*}
Note that neither $\gcal'$ is contained in $\fcal$, because $\fcal$ has minimal rank between all $\preccurlyeq$-maximal subsheaves of $\ecal$, nor $\fcal$ is contained in $\gcal'$, for maximality of $\fcal$; therefore $\fcal$ is a proper subsheaf of $\fcal+\gcal'$ and, for maximality, $\hilpolred{\fcal}^\eps\succeq\hilpolred{\fcal+\gcal'}^\eps$. As before one gets
\begin{equation*}
 \hilpolred{\fcal\cap\gcal'}^\eps\succ\hilpolred{\gcal'}^\eps\succeq\hilpolred{\gcal}^\eps,
\end{equation*}
but $\gcal\subseteq\fcal\cap\gcal'\subseteq\fcal$ and this contradicts the assumpions on $\gcal$.
Therefore $\fcal$ satisfies the required properties. The uniqueness and the \fr-semistability of $\fcal$ easily follow from properties $(1)$ and $(2)$.
\end{proof}

\begin{lemma}\label{lemma-maximal-dest-subsheaf}
Let $(\ecal,\varphi)$ be as before. If it is not slope \fr-semistable there is a unique proper subsheaf $\fcal$ of $\ecal$ such that:
\begin{enumerate}
 \item $\dmu(\fcal)\geq\dmu(\wcal)$ for all subsheaves $\wcal$ of $\ecal$.
 \item If $\dmu(\fcal)=\dmu(\wcal)$ then $\wcal\subset\fcal$.
\end{enumerate}
\end{lemma}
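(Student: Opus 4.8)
The plan is to transcribe the proof of Proposition~\ref{prop-maximal-dest-subsheaf} almost verbatim, replacing the reduced \fr-decorated Hilbert polynomial $\hilpolred{\cdot}^\eps$ by the \fr-decorated slope $\dmu(\cdot)$ and the eventual ordering $\preceq$ of polynomials by the honest total ordering of $\qq$. On the set of subsheaves of $\ecal$ I would introduce the partial order
\begin{equation*}
 \fcal_1\preccurlyeq\fcal_2\quad\Longleftrightarrow\quad\fcal_1\subseteq\fcal_2\quad\wedge\quad\ddeg(\fcal_1)\,\rk{\fcal_2}\leq\ddeg(\fcal_2)\,\rk{\fcal_1},
\end{equation*}
that is, inclusion together with $\dmu(\fcal_1)\leq\dmu(\fcal_2)$. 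Every $\preccurlyeq$-chain is in particular an inclusion chain of subsheaves of a coherent sheaf on the Noetherian $X$, hence stabilizes and has a maximum as upper bound, so Zorn's Lemma produces $\preccurlyeq$-maximal subsheaves. As in the proposition I would let $\fcal$ be a $\preccurlyeq$-maximal subsheaf of minimal rank; since $(\ecal,\varphi)$ is not slope \fr-semistable there is a subsheaf of strictly larger decorated slope than $\ecal$, so $\fcal$ is proper, and I claim it satisfies (1) and (2).

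The only genuine computation is the slope analogue of \eqref{eq-proof-destab-max}: from the exact sequence $0\to\fcal\cap\gcal\to\fcal\oplus\gcal\to\fcal+\gcal\to0$ one gets the additivity $\deg\fcal+\deg\gcal=\deg(\fcal\cap\gcal)+\deg(\fcal+\gcal)$ and the matching rank identity, whence, writing $\mu$ for the ordinary slope,
\begin{equation*}
 \rkk{\fcal\cap\gcal}\bigl(\mu(\gcal)-\mu(\fcal\cap\gcal)\bigr)=\rkk{\fcal+\gcal}\bigl(\mu(\fcal+\gcal)-\mu(\fcal)\bigr)+\bigl(\rkk{\gcal}-\rkk{\fcal\cap\gcal}\bigr)\bigl(\mu(\fcal)-\mu(\gcal)\bigr).
\end{equation*}
Translating the comparisons $\dmu(\fcal)>\dmu(\fcal+\gcal)$ and $\dmu(\gcal)\geq\dmu(\fcal)$ back into inequalities for $\mu$ by restoring the $\eps$-corrections (exactly as in the proposition, now with $a\deltabar$ in place of $a\delta$), substituting, and invoking the submodular inequality $\epss{\fcal+\gcal}+\epss{\fcal\cap\gcal}\leq\epss{\fcal}+\epss{\gcal}$, the $\mu$-terms collapse and one obtains
\begin{equation*}
 \rkk{\fcal\cap\gcal}\bigl(\dmu(\gcal)-\dmu(\fcal\cap\gcal)\bigr)<a\deltabar\bigl(\epss{\fcal+\gcal}-\epss{\fcal}-\epss{\gcal}+\epss{\fcal\cap\gcal}\bigr)\leq0.
\end{equation*}

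I would then use this inequality twice, as in the proposition. First, to prove (1), suppose some $\gcal\subseteq\ecal$ had $\dmu(\gcal)\geq\dmu(\fcal)$. If $\gcal\not\subseteq\fcal$ then $\fcal\subsetneq\fcal+\gcal$, maximality of $\fcal$ gives $\dmu(\fcal)>\dmu(\fcal+\gcal)$, and the displayed inequality forces $\dmu(\fcal\cap\gcal)>\dmu(\gcal)\geq\dmu(\fcal)$; hence I may replace $\gcal$ by $\fcal\cap\gcal$ and assume $\gcal\subseteq\fcal$. Taking $\gcal$ to be $\preccurlyeq$-maximal inside $\fcal$ and extending it to a $\preccurlyeq$-maximal $\gcal'$ of $\ecal$, the minimality of $\rk{\fcal}$ forbids $\gcal'\subseteq\fcal$ while the maximality of $\fcal$ forbids $\fcal\subseteq\gcal'$, so $\fcal\subsetneq\fcal+\gcal'$ and $\dmu(\fcal)>\dmu(\fcal+\gcal')$; a second application of the inequality gives $\dmu(\fcal\cap\gcal')>\dmu(\gcal')\geq\dmu(\gcal)$ with $\gcal\subseteq\fcal\cap\gcal'\subseteq\fcal$, contradicting the $\preccurlyeq$-maximality of $\gcal$ in $\fcal$. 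This establishes (1), and (2) together with uniqueness then follow formally from (1) and the definition of $\preccurlyeq$, exactly as at the end of Proposition~\ref{prop-maximal-dest-subsheaf}.

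The main obstacle is, as in the polynomial case, the careful bookkeeping of the $\eps$-corrections in the collapse of the identity above, used jointly with the submodular inequality for $\eps$ and the minimal-rank choice of $\fcal$. One must also dispose of the degenerate case in which $\fcal\cap\gcal$ has rank zero: since $\ecal$ is torsion free this forces $\fcal\cap\gcal=0$, whence $\fcal+\gcal=\fcal\oplus\gcal$ and, using $\epss{\fcal+\gcal}\leq\epss{\fcal}+\epss{\gcal}$, the value $\dmu(\fcal+\gcal)$ dominates $\min(\dmu(\fcal),\dmu(\gcal))=\dmu(\fcal)$, which already contradicts $\dmu(\fcal)>\dmu(\fcal+\gcal)$, so this case does not arise. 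Once these points are settled everything is formal, and the reassuring feature is that passing from $\hilpolred{\cdot}^\eps$ to $\dmu$ can only simplify matters, since $\dmu$ takes values in the totally ordered $\qq$ and the eventual-domination subtleties of $\preceq$ dissolve into ordinary inequalities of rational numbers.
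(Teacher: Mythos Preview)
Your proposal is correct and takes essentially the same approach as the paper: the paper's own proof of this lemma is the single sentence ``it is sufficient to replace $\hilpolred{}^\eps$ with $\dmu$, $\hilpol{}^\eps$ with $\ddeg$ and $\delta$ with $\deltabar$'' in the proof of Proposition~\ref{prop-maximal-dest-subsheaf}, and what you have written is precisely that transcription spelled out in full, including the same partial order, the same minimal-rank choice of $\fcal$, the same see-saw identity from the short exact sequence, and the same two-step contradiction via $\gcal\mapsto\fcal\cap\gcal$ and then $\gcal\mapsto\gcal'$. Your explicit treatment of the rank-zero intersection case is an addition rather than a deviation.
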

\begin{proof}
 The proof is the same of Proposition \ref{prop-maximal-dest-subsheaf}: it is sufficient to replace $\hilpolred{}^\eps$ with $\dmu$, $\hilpol{}^\eps$ with $\ddeg$ and $\delta$ with $\deltabar$.
\end{proof}

\begin{remark}
 Note that, if $(\ecal,\varphi)$ is \fr-semistable, or, respectively, slope \fr-semistable, then the maximal decorated destabilizing (resp. slope destabilizing) subsheaf coincides with $\ecal$.
\end{remark}

\begin{proposition}\label{rem-maximal-dest-sheaf-for-coherent}
 Let $(\acal,\varphi)$ be a decorated coherent sheaf of positive rank, then Proposition \ref{prop-maximal-dest-subsheaf} and Lemma \ref{lemma-maximal-dest-subsheaf} hold true, in the sense that if $(\acal,\varphi)$ is not \fr-semistable (slope \fr-semistable respectively) there is a unique, \fr-semistable, proper subsheaf $\fcal$ of $\ecal$ such that:
\begin{enumerate}
 \item $\hilpol{\fcal}^\eps\rk{\ecal}\succeq\hilpol{\wcal}^\eps\rk{\fcal}$ for all subsheaves $\wcal$ of $\ecal$.
 \item If $\hilpol{\fcal}^\eps\rk{\wcal}=\hilpol{\wcal}^\eps\rk{\fcal}$ then $\wcal\subset\fcal$.
\end{enumerate}
or, respectively
\begin{enumerate}
 \item[$1^{'}$.] $\ddeg(\fcal)\rk{\wcal}\geq\ddeg(\wcal)\rk{\fcal}$ for all subsheaves $\wcal$ of $\ecal$.
 \item[$2^{'}$.] If $\ddeg(\fcal)\rk{\wcal}=\ddeg(\wcal)\rk{\fcal}$) then $\wcal\subset\fcal$.
\end{enumerate}
\end{proposition}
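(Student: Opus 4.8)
The plan is to reduce the coherent statement to the torsion-free results already established in Proposition~\ref{prop-maximal-dest-subsheaf} and Lemma~\ref{lemma-maximal-dest-subsheaf}, isolating the torsion subsheaf $T\doteqdot T(\acal)$. The crucial preliminary remark is that, by Remark~\ref{rem-semistabilita}, one has $T\ab\subseteq\ker\varphi$; consequently $\epss{\wcal}=0$ for every subsheaf $\wcal\subseteq T$, so that $\hilpol{\wcal}^\eps=\hilpol{\wcal}$ (and $\ddeg(\wcal)=\deg\wcal$) on such $\wcal$. Recall as well that, again by Remark~\ref{rem-semistabilita}, the mere presence of torsion already forces $(\acal,\varphi)$ not to be \fr-semistable, so the torsion case falls squarely under the hypothesis of the statement.

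If $T=0$ then $\acal$ is torsion free and every nonzero subsheaf has strictly positive rank. In that case I would simply observe that conditions $(1)$ and $(2)$ are nothing but the reduced-polynomial conditions of Proposition~\ref{prop-maximal-dest-subsheaf} (and $(1^{'})$, $(2^{'})$ those of Lemma~\ref{lemma-maximal-dest-subsheaf}) after clearing the denominators $\rk{\fcal}$, $\rk{\wcal}$; clearing is legitimate precisely because every rank in sight is positive, and it preserves the ordering $\preceq$. Hence the subsheaf furnished there, together with its \fr-semistability and its uniqueness, is exactly the $\fcal$ we want.

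Suppose now $T\neq0$; I claim $\fcal\doteqdot T$ does the job (and is proper since $\rk{\acal}>0$). Since $\rk{T}=0$, condition $(1)$ becomes $\hilpol{T}\,\rk{\wcal}\succeq\hilpol{\wcal}^\eps\cdot0=0$, which holds because $\hilpol{T}\succ0$ (as $T\neq0$) and $\rk{\wcal}\geq0$. For $(2)$, equality forces $\hilpol{T}\,\rk{\wcal}=0$, hence $\rk{\wcal}=0$, i.e.\ $\wcal$ is torsion and so $\wcal\subseteq T=\fcal$. Uniqueness is the same dichotomy: any $\fcal'$ obeying $(1)$ must have $\rk{\fcal'}=0$, since testing $(1)$ against $\wcal=T$ otherwise gives $0\succeq\hilpol{T}\,\rk{\fcal'}\succ0$; thus $\fcal'\subseteq T$, and applying $(2)$ to the pair $(\fcal',T)$ returns $T\subseteq\fcal'$, whence $\fcal'=T$. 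In the slope case the identical argument works whenever $T$ has codimension $1$, because then $\ddeg(T)=\deg T>0$ dominates every positive-rank subsheaf exactly as above.

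The heart of the matter is the behaviour of rank-zero subsheaves under the cross-multiplied order~\eqref{eq-relaz-ordine}: since one can no longer divide by the rank, it must be checked by hand that a nonzero torsion subsheaf dominates every positive-rank subsheaf, and this is what pins down $\fcal=T$ as soon as $T\neq0$ — precisely the alternative recorded in Proposition~\ref{prop-A-ss-sse-A/T(A)-ss}(2). Two clauses need care. First, the \fr-semistability of the output: for $T=0$ it is inherited from the torsion-free result, while for $\fcal=T$ it has to be read in the rank-zero convention of Definition~\ref{def-eps-semistable}. Second, the slope case when $T$ has codimension $\geq2$: there $\deg T=0$, so $T$ no longer dominates and the slope-maximal destabilizing subsheaf is instead governed by the torsion-free quotient $\acal/T$ via Lemma~\ref{lemma-maximal-dest-subsheaf}, uniqueness being understood up to codimension-$\geq2$ pieces, which slope \fr-semistability does not see. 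I expect this slope/higher-codimension bookkeeping to be the only genuine obstacle; the \fr-polynomial case is clean.
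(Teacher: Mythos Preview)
Your proposal is correct and lands on the same dichotomy as the paper: if $T(\acal)=0$ one inherits the torsion-free result, and if $T(\acal)\neq0$ then $\fcal=T(\acal)$ is the maximal destabilizer. The paper's organization differs only in that it reruns the Zorn-lemma construction of Proposition~\ref{prop-maximal-dest-subsheaf}, takes a minimal-rank $\preccurlyeq$-maximal element, and then argues post hoc that this element has rank zero (hence equals $T(\acal)$) precisely when $T(\acal)\neq0$; you instead split on $T$ up front and verify the cross-multiplied inequalities for $T$ directly, which is marginally cleaner. Your flagging of the slope case with torsion in codimension~$\geq2$ (where $\deg T=0$ and $T$ no longer dominates) is in fact more careful than the paper, which simply declares that ``the proof in the case of slope \fr-semistability is the same'' without addressing that degeneracy.
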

\begin{proof}
 Indeed, let $\fcal$ be a minimal rank sheaf between all $\preccurlyeq$-maximal sheaves as in the proof of Proposition \ref{prop-maximal-dest-subsheaf}. Suppose that $\rk{\fcal}=0$, then $\fcal\subset\kker=T(\acal)$, and so, by maximality $\fcal=T(\acal)$. If $\gcal$ is such that $\rk{\gcal}>0$ and $T(\acal)\preccurlyeq\gcal$ then $\hilpol{T(\acal)}\preceq0$ but, by hypothesis, $T(\acal)$ destabilize and so $\hilpol{T(\acal)}\succ0$. Finally $T(\acal)$ is clearly unique and semistable.\\
 Otherwise, if $\rk{\fcal}>0$, then $\acal$ has no nontrivial rank zero subsheaves, in particular is torsion free. Indeed if exists a subsheaf $\gcal\subset\acal$ with $\rkk{\gcal}=0$ then, by the above considerations exists $\gcal'$ with $\rk{\gcal'}=0$, $\gcal\subseteq\gcal'$ and $\gcal'$ $\preccurlyeq$-maximal which is absurd by the assumptions on $\fcal$. Then the proof continues as the proof of Proposition \ref{prop-maximal-dest-subsheaf}.\\
The proof in the case of slope \fr-semistability is the same.
\end{proof}
\begin{remark}
 Proposition \ref{prop-maximal-dest-subsheaf} and Lemma \ref{lemma-maximal-dest-subsheaf} hold true also for decorated sheaves of type $(a,b,c,\linebu)$ as pointed out in $(1)$ Remark \ref{rem-diamond}.
\end{remark}
\subsection{Families of decorated sheaves}\label{sec-fam-of-decorated-sheaves}
Let $f:Y\to S$ be a morphism of finite type of Noetherian schemes. Recall that a \textbf{flat family} of coherent sheaves \textbf{on the fibre of the morphism $f$} is a coherent sheaf $\acal$ over $Y$, which is flat over $S$, i.e., for any $y\in Y$ $\acal_y$ is flat over the local ring $\ocal_{S,f(y)}$. If $\acal$ is flat over the fibre of $f$ the Hilbert polynomial $\hilpol{\acal_s}$ is locally constant as a function of $s$. The converse is not true in general, but, if $S$ is reduced, then the two assertions are equivalent.

\begin{definition}\label{def-flat-fam-over-fibre-morphism}
 Let $(\ecal,\varphi)$ be a \textit{decorated sheaf} over $Y$ of type $(a,b,\linebu)$ and $f:Y\to S$ be a morphism of finite type between Noetherian schemes. Then $(\ecal,\varphi)$ is a \textbf{flat family over the fibre of $f$} if and only if
\begin{itemize}
 \item[-] $\ecal$ and $\linebu$ are flat families of coherent sheaves over the fibre of $f:Y\to S$;
 \item[-] $\ecal_s=\rest{\ecal}{f^{-1}(s)}$ is torsion free for all $s\in S$;
 \item[-] $\linebu_s=\rest{\linebu}{f^{-1}(s)}$ is locally free for all $s\in S$;
 \item[-] $\eps(\varphi_s)=\eps(\rest{\varphi}{\ecal_s})$ is locally constant as a function of $s$.
\end{itemize}
 Note that the above conditions imply that the \fr-Hilbert polynomials $\hilpol{\ecal_s}^\eps$ are locally constant for $s\in S$.
\end{definition}

\begin{definition}
 Let $(\acal,\varphi)$ be a \textit{decorated coherent sheaf of positive rank}. Then $(\acal,\varphi)$ is a \textbf{flat family over the fibre of $f$} if and only if
\begin{itemize}
 \item[-] $\acal$ and $\linebu$ are flat families of coherent sheaves over the fibre of $f:Y\to S$;
 \item[-] $\linebu_s$ is locally free for all $s\in S$;
 \item[-] $\eps(\varphi_s)$ is locally constant as a function of $s$;
 \item[-] $\rk{\acal_s}>0$ for any $s\in S$.
\end{itemize}
 As in Definition \ref{def-flat-fam-over-fibre-morphism} the above conditions imply that the \fr-Hilbert polynomials $\hilpol{\acal_s}^\eps$ are locally constant for $s\in S$.
\end{definition}
\subsection{Families of quotients}\label{sec-families-quotients}
Let $(\acal,\varphi)$ be a decorated coherent sheaf over $X$ and let $q:\acal\twoheadrightarrow\qcal$ be surjective morphism of sheaves. Let $\fcal$ be the subsheaf of $\acal$ defined by $\ker q$, so the following succession of sheaves is exact:
\begin{equation*}
 0\longrightarrow\fcal\stackrel{i}{\longrightarrow}\acal\longrightarrow\qcal\longrightarrow0.
\end{equation*}
Note that $\fcal$ is uniquely determined by $\qcal$ and therefore also $(\fcal,\rest{\varphi}{\fcal})$ is uniquely (up to isomorphism of decorated sheaves) determined by $\qcal$. Indeed, let $(\fcal,\psi)$ be another decorated subsheaf of $(\acal,\varphi)$, then, by definition of decorated subsheaf, there exists a non-zero scalar morphism $\lambda:\linebu\to\linebu$ such that $\lambda\circ\psi=\varphi$, then
\begin{equation*}
 \xymatrix{
\fcal\ab \ar[rr]^{i\ab} \ar[dd]_{\psi} \ar[dr]^{\rest{\varphi}{\fcal}} &  & \acal\ab \ar[dd]^{\varphi}\\
 & \linebu \ar@{=}[dr]^{\text{id}} & \\
\linebu \ar[rr]_{\lambda} \ar@{.>}[ur]_{\lambda}& & \linebu.
}
\end{equation*}
Since the big square and the upper triangle commute, the entire diagram commutes and so it easy to see that $(\fcal,\psi)$ and $(\fcal,\rest{\varphi}{\fcal})$ are isomorphic as decorated sheaves.\\

Suppose now that $(\fcal,\rest{\varphi}{\fcal})$ de-semistabilizes a decorated sheaf $(\ecal,\varphi)$ (with respect to the slope \fr-semistability), then $\dmu(\fcal)>\dmu(\ecal)$ and so
\begin{align*}
 \mu(\fcal)-\frac{a\deltabar\epss{\fcal}}{\rkk{\fcal}} & > \mu(\ecal)-\frac{a\deltabar\epss{\ecal}}{\rkk{\ecal}}\\
 \deg(\fcal) & > \rkk{\fcal}\left[\mu(\ecal)+a\deltabar\left(\frac{\epss{\fcal}}{\rkk{\fcal}}-\frac{\epss{\ecal}}{\rkk{\ecal}}\right) \right]
\end{align*}
Recalling that $\deg(\ecal)=\deg(\fcal)+\deg(\qcal)$,
\begin{align*}
 \deg(\qcal) & < \deg(\ecal) - \rkk{\fcal}\left[\mu(\ecal)+a\deltabar\left(\frac{\epss{\fcal}}{\rkk{\fcal}}-\frac{\epss{\ecal}}{\rkk{\ecal}}\right) \right]\\
             & = \mu(\ecal)\rkk{\qcal}-a\deltabar\left(\epss{\fcal}-\epss{\ecal}\frac{\rkk{\fcal}}{\rkk{\ecal}}\right)
\end{align*}
and therefore, if $\epss{\ecal}=1$,
\begin{equation}\label{eq-equiv-destab-su-quoz-slope}
 \mu(\qcal) < \mu(\ecal)-a\deltabar\left(\frac{\epss{\fcal}}{\rkk{\qcal}}-\frac{\rkk{\fcal}}{\rkk{\qcal}\rkk{\ecal}}\right)=\mu(\ecal)+a\deltabar\cdot\begin{cases}
						    \frac{\rkk{\fcal}}{\rkk{\qcal}\rkk{\ecal}}\doteqdot C_0 \quad\text{ if } \epss{\fcal}=0\\
						    -\frac{1}{\rkk{\ecal}}\doteqdot -C_1\quad\text{ if } \epss{\fcal}=1,
						  \end{cases}
\end{equation}
otherwise, if $\epss{\ecal}=0$ then also $\epss{\fcal}=0$ and so we get that $\mu(\qcal)<\mu(\ecal)$.\\

\begin{remark}\label{rem-def-ddeg-al-quoziente}
 Defining $\ddeg(\qcal)\doteqdot\ddeg(\ecal)-\ddeg(\fcal)$ and $\dmu(\qcal)=\ddeg(\qcal)/\rkk{\qcal}$ one easily gets that $\dmu(\fcal)>\dmu(\ecal)$ if and only if $\dmu(\qcal)\leq\dmu(\ecal)$. Note that in general it is not possible to define a morphism $\psi$ over $\qcal$ such that $(\qcal,\psi)$ is a decorated sheaf and $\eps{(\qcal,\psi)}+\eps{(\fcal,\rest{\varphi}{\fcal})}=\eps{(\ecal,\varphi)}$. In fact it is possible to define a morphism to the quotient satisfying such properties if and only if $\kk{\fcal}{\ecal}=0$ or $\kk{\fcal}{\ecal}=a$. This is because only in these two cases the morphism $\overline{\varphi}$:
 \begin{equation*}
  \xymatrix{
  \fcal\ab \ar@{^{(}->}[r] & \ecal\ab \ar[r]^{\varphi} \ar@{->>}[d] & \linebu \\
   & \ecal\ab/\fcal\ab \ar[r]_{t} & (\ecal/\fcal)\ab \ar@{.>}[u]_{\overline{\varphi}}
  }
 \end{equation*}
 is well defined and so it is possible to give a well-defined structure of decorated sheaf to $(\ecal/\fcal)$.
\end{remark}

Analogously, if $(\fcal,\rest{\varphi}{\fcal})$ de-semistabilizes $(\ecal,\varphi)$ with respect to the \fr-semistability, i.e., if
\begin{equation*}
 \hilpolred{\fcal}-a\delta\frac{\epss{\fcal}}{\rkk{\fcal}} \succ \hilpolred{\ecal}-\frac{a\delta}{\rkk{\ecal}},
\end{equation*}
 then similar calculations show that
\begin{equation}\label{eq-equiv-destab-su-quoz-polinomio}
 \hilpolred{\qcal}\prec\hilpolred{\ecal}+a\delta\cdot\begin{cases}
						  C_0 \quad\text{ if } \epss{\fcal}=0\\
						  -C_1\quad\text{ if } \epss{\fcal}=1\\
						\end{cases}
\end{equation}
Note that condition \eqref{eq-equiv-destab-su-quoz-slope} implies condition \eqref{eq-equiv-destab-su-quoz-polinomio}, conversely, if $\hilpolred{\qcal}\prec\hilpolred{\ecal}+\delta C$ then $\mu(\qcal)\leq\mu(\ecal)+\deltabar C$.\\

Let $(\ecal,\varphi)$ be a flat family of decorated sheaves over the fibre of a projective morphism $f:X\to S$. Let $\hilpol{}=\hilpol{\ecal_s}$ and $\hilpolred{}=\hilpolred{\ecal_s}$ the Hilbert polynomial and, respectively, the reduced Hilbert polynomial of $\ecal$ (which are constant because the family is flat over $S$). Define:
\begin{enumerate}
 \item $\mathfrak{F}$ as the family (over the fibre of $f$) of saturated subsheaves $\fcal\hookrightarrow\ecal_s$ such that the induced torsion free quotient $\ecal_s\twoheadrightarrow\qcal$ satisfy $\mu(\qcal)\leq\mu(\ecal_s)+a\deltabar C_0$;
 \item $\mathfrak{F_0}$ as the family of decorated subsheaves $(\fcal,\rest{\varphi}{\fcal})\hookrightarrow(\ecal_s,\rest{\varphi}{\ecal_s})$ such that:\begin{itemize}
	 \item[-] $\eps{(\fcal,\rest{\varphi}{\fcal})}=0$;
	 \item[-] $\hilpolred{\fcal}^\eps=\hilpolred{\fcal}\succ\hilpolred{\ecal}^\eps$ (i.e., $\hilpolred{\qcal}\prec\hilpolred{}+a\delta C_0$ with $\qcal\doteqdot\text{coker}(\fcal\hookrightarrow\ecal_s)$);
	 \item[-] $\fcal$ is a saturated subsheaf of $\ecal_s$;
      \end{itemize}
 \item $\mathfrak{F_1}$ as the family of decorated subsheaves $(\fcal,\rest{\varphi}{\fcal})\hookrightarrow(\ecal_s,\rest{\varphi}{\ecal_s})$ such that:\begin{itemize}
	 \item[-] $\eps{(\fcal,\rest{\varphi}{\fcal})}=1$;
	 \item[-] $\hilpolred{\fcal}^\eps\succ\hilpolred{\ecal}^\eps$  (i.e., $\hilpolred{\qcal}\prec\hilpolred{}-a\delta C_1$ with $\qcal\doteqdot\text{coker}(\fcal\hookrightarrow\ecal_s)$);
	 \item[-] $\fcal$ is a saturated subsheaf of $\ecal_s$;
      \end{itemize}
\end{enumerate}

We want to prove that the set of Hilbert polynomials of destabilizing decorated subsheaves of a flat family $(\ecal,\varphi)$ of decorated sheaves over the fibre of a projective morphism $f:X\to S$ is a finite set. From this we conclude that the semistability condition is an open condition, i.e., the set $\{s\in S \,|\, (\ecal_s,\varphi_s) \text{ is slope \fr-semistable}\}$ is open in $S$. In order to prove this result we first need to recall some facts.

\begin{definition}\label{def-bounded-family}
 A family of isomorphism classes of coherent sheaves on a projective scheme $Y$ over $k$ is \textbf{bounded} if there is a $k$-scheme $S$ of finite type and a coherent $\ocal_{S\times Y}$-sheaf $\gcal$ such that the given family is contained in the following set: $\{ \rest{\gcal}{\text{Spec}(k(s))\times Y} \,|\, s \text{ is a closed point in } S\}$.
\end{definition}

\begin{definition}\label{de-m-regular}
A sheaf $\acal$ over $Y$ is said \textbf{$m$-regular} if
\begin{equation*}
 H^i(Y,\acal(m-i))=0 \text{ for all }i>0.
\end{equation*}
Define the \textbf{Mumford-Castelnuovo regularity} of $\acal$ as 
\begin{equation*}
 \text{reg}(\acal)\doteqdot\inf\{m\in\zz \,|\, \acal \text{ is }m\text{-regular}\}
\end{equation*}
\end{definition}

Then the following statements hold:

\begin{lemma}[Lemma 1.7.2 \cite{HLbook}]\label{lemma-equiv-m-regular}
 If $\acal$ is $m$-regular, then
 \begin{enumerate}
  \item[i)] $\acal$ is $m'$-regular for all integers $m'\geq m$.
  \item[ii)] $\acal(m)=\acal\otimes\amplebu{m}$ is globally generated.
  \item[iii)] For all $n\geq0$ the natural homomorphisms
  \begin{equation*}
   H^0(X,\acal(m))\otimes H^0(X,\amplebu{n})\longrightarrow H^0(X,\acal(m+n))
  \end{equation*}
  are surjective.
 \end{enumerate}

\end{lemma}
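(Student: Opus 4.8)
The plan is to prove all three assertions simultaneously by induction on $d\doteqdot\dim\text{Supp}(\acal)$, with restriction to a general hyperplane as the engine. Working in the standard setting where $\amplebu{1}$ is very ample, so that $X\subseteq\mathbb{P}(V)$ and a general section $s\in V\subseteq H^0(\amplebu{1})$ cuts out a hyperplane $H=\{s=0\}$: since $k$ is algebraically closed, hence infinite, a general $H$ avoids every one of the finitely many associated points of $\acal$ (each such point is a closed point, avoided directly, or the generic point of a positive-dimensional subvariety whose closure is not contained in a general hyperplane). Thus $s$ is a non-zero-divisor on $\acal$ and the restriction sequence
\[
0\longrightarrow\acal(-1)\xrightarrow{\;\cdot s\;}\acal\longrightarrow\acal_H\longrightarrow0
\]
is exact, with $\acal_H\doteqdot\acal\otimes\ocal_H$ supported in dimension $d-1$. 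Securing this sequence is the key step, and the genericity argument is where the hypotheses on $k$ and on the ampleness of $\amplebu{1}$ enter. The base case $d=0$ is immediate: a sheaf on points has no higher cohomology, so it is $m$-regular for every $m$, is globally generated after any twist, and the multiplication maps are trivially surjective.

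For the inductive step I would first record, by chasing the long exact cohomology sequence of the twists of the restriction sequence, that $\acal_H$ is $m$-regular on $H$: for $i>0$ the segment $H^i(\acal(m-i))\to H^i(\acal_H(m-i))\to H^{i+1}(\acal(m-i-1))$ has both outer terms vanishing by $m$-regularity of $\acal$. By the induction hypothesis applied to $\acal_H$ (part (i) in dimension $d-1$), $\acal_H$ is then $m'$-regular for all $m'\geq m$. Now part (i) for $\acal$ follows: to see $H^i(\acal(m+1-i))=0$ for $i>0$, examine $H^i(\acal(m-i))\to H^i(\acal(m+1-i))\to H^i(\acal_H(m+1-i))$, where the left term vanishes by $m$-regularity of $\acal$ and the right term vanishes because it is precisely the index-$i$ condition for $(m+1)$-regularity of $\acal_H$, already established. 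Iterating gives $m'$-regularity of $\acal$ for all $m'\geq m$.

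With part (i) in hand I would turn to the multiplication maps, which it suffices to treat for $n=1$: the general $n$ follows by composing the degree-one maps $H^0(\acal(m+k))\otimes H^0(\amplebu{1})\to H^0(\acal(m+k+1))$ for $k=0,\dots,n-1$, each being the $n=1$ statement applied to the $(m+k)$-regular sheaf $\acal$, and factoring the resulting surjection through $H^0(\acal(m))\otimes H^0(\amplebu{n})$. For $n=1$ I would run a diagram chase relating multiplication by global sections with restriction to $H$: $m$-regularity gives surjectivity of $H^0(\acal(m))\to H^0(\acal_H(m))$ (since $H^1(\acal(m-1))=0$) and, via part (i), of $H^0(\acal(m+1))\to H^0(\acal_H(m+1))$ (since $H^1(\acal(m))=0$), the latter with kernel exactly $s\cdot H^0(\acal(m))$; the induction hypothesis supplies surjectivity of the degree-one multiplication map for $\acal_H$ on $H$. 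Given $v\in H^0(\acal(m+1))$, I write its restriction to $H$ as a sum of products of restricted sections, lift these to $H^0(\acal(m))$ and to global sections of $\amplebu{1}$, subtract the corresponding product from $v$, and observe that the difference restricts to $0$ on $H$, hence lies in $s\cdot H^0(\acal(m))$; this exhibits $v$ in the image of $H^0(\acal(m))\otimes H^0(\amplebu{1})\to H^0(\acal(m+1))$, proving (iii).

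Finally, part (ii) drops out of part (iii): the graded module $M=\bigoplus_{n\geq0}H^0(\acal(m+n))$ over the homogeneous coordinate ring $R=\bigoplus_{n\geq0}H^0(\amplebu{n})$ is, by (iii), generated in degree $0$ by $H^0(\acal(m))$, so sheafifying the surjection $H^0(\acal(m))\otimes_k R\twoheadrightarrow M$ yields a surjection $H^0(\acal(m))\otimes_k\ox\twoheadrightarrow\acal(m)$, i.e.\ $\acal(m)$ is globally generated. I expect the genuinely delicate point to be the hyperplane choice in the base setup; everything after it is standard long-exact-sequence and diagram bookkeeping, the only care needed being to keep the twists aligned so that each vanishing invoked is exactly the regularity established at the previous stage.
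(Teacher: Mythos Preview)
The paper does not prove this lemma; it merely quotes it from \cite{HLbook} without argument, so there is nothing to compare your approach against. Your proof is the classical Castelnuovo--Mumford argument by induction on $\dim\text{Supp}(\acal)$ via restriction to a general hyperplane, which is precisely the proof given in the cited reference (and originally in Mumford's \emph{Lectures on curves on an algebraic surface}).

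One small point to watch in your step for (iii): when you ``lift these to $H^0(\acal(m))$ and to global sections of $\amplebu{1}$'', the lift of the $\acal$-sections is guaranteed by $H^1(\acal(m-1))=0$, but lifting sections of $\ocal_H(1)$ to $H^0(X,\amplebu{1})$ requires surjectivity of the restriction map, which amounts to $H^1(X,\ocal_X)=0$ and is not assumed. The standard fix is either to push $\acal$ forward along the closed immersion $X\hookrightarrow\pp(V)$ (regularity and cohomology are unchanged, and on $\pp(V)$ the restriction $H^0(\ocal(1))\to H^0(\ocal_H(1))$ is trivially surjective), or to strengthen the inductive statement to surjectivity of $H^0(\acal(m))\otimes V\to H^0(\acal(m+1))$, where $V\subseteq H^0(\amplebu{1})$ is the embedding subspace, so that the lift from $V/(k\cdot s)$ to $V$ is automatic. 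Either tweak closes the gap; the overall strategy is correct.
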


\begin{lemma}[Lemma 1.7.6 of \cite{HLbook}]\label{lemma-equiv-boundness-conditions}
 The following properties of families of sheaves $\{\acal_i\}_{i\in I}$ are equivalent:
\begin{enumerate}
 \item[i)] the family is bounded;
 \item[ii)] the set of Hilbert polynomials $\{\hilpol{\acal_i}\}_{i\in I}$ is finite and there is a uniform bound for $\text{reg}(\acal_i)\leq C$ for all $i\in I$
 \item[iii)] the set of Hilbert polynomials $\{\hilpol{\acal_i}\}_{i\in I}$ is finite and there is a coherent sheaf $\acal$ such that all $\acal_i$ admit surjective morphisms $\acal\to\acal_i$.
\end{enumerate}
\end{lemma}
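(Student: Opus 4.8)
The plan is to prove the cyclic chain of implications $(i)\Rightarrow(ii)\Rightarrow(iii)\Rightarrow(i)$, using the regularity criterion of Lemma \ref{lemma-equiv-m-regular} to convert regularity into global generation and Grothendieck's Quot scheme to close the loop back to boundedness.

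For $(i)\Rightarrow(ii)$ I would start from a finite type $k$-scheme $S$ and a coherent $\odi{S\times Y}$-sheaf $\gcal$ whose closed fibres contain the family, and apply the flattening stratification: after writing $S$ as a finite disjoint union of locally closed subschemes, $\gcal$ becomes flat over each stratum. On a flat family over a connected base the Hilbert polynomial is locally constant, so only finitely many Hilbert polynomials occur across the finitely many strata. For the uniform regularity bound I would use that, over a Noetherian base of finite type, the higher direct images $R^i\pi_*(\gcal(m))$ under the projection $\pi\colon S\times Y\to S$ vanish for $m\gg0$, uniformly in $i$ by quasi-compactness; cohomology and base change then produce a single integer $C$ with $H^i(Y_s,\gcal_s(C-i))=0$ for all $i>0$ and all $s$, giving $\text{reg}(\acal_i)\leq C$.

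For $(ii)\Rightarrow(iii)$, assuming every member is $m$-regular for one fixed $m$, Lemma \ref{lemma-equiv-m-regular}(ii) tells me that each $\acal_i(m)$ is globally generated, so there is a surjection $H^0(Y,\acal_i(m))\otimes\odi{Y}(-m)\twoheadrightarrow\acal_i$. By $m$-regularity the higher cohomology of $\acal_i(m)$ vanishes, whence $\dim H^0(Y,\acal_i(m))=\hilpol{\acal_i}(m)$, a quantity taking only finitely many values; letting $N$ be their maximum and padding the unused generators by zero yields a surjection from the single sheaf $\acal\doteqdot\odi{Y}(-m)^{\oplus N}$ onto every $\acal_i$.

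Finally, for $(iii)\Rightarrow(i)$ I would write each $\acal_i$ as a quotient of the fixed sheaf $\acal$ with Hilbert polynomial in the finite set $\{P_1,\dots,P_k\}$, so that $\acal_i$ is a $k$-point of $\quot(\acal,P_j)$ for some $j$. By Grothendieck's theorem each $\quot(\acal,P_j)$ is projective, hence of finite type, and carries a universal quotient sheaf; the finite disjoint union of these Quot schemes together with its universal quotient exhibits the family as bounded in the sense of Definition \ref{def-bounded-family}. I expect this last implication to be the main obstacle, since it rests on the existence, projectivity and finite-typeness of the Quot scheme — the genuinely deep input — while the first two implications amount to cohomological bookkeeping organised around the regularity lemma.
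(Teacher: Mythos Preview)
The paper does not prove this lemma: it is quoted verbatim as Lemma~1.7.6 of \cite{HLbook} and used as a black box, so there is no in-paper argument to compare against. Your cyclic proof $(i)\Rightarrow(ii)\Rightarrow(iii)\Rightarrow(i)$ is essentially the standard argument given in Huybrechts--Lehn, and each step is correct as you outline it; in particular your use of flattening stratification for finiteness of Hilbert polynomials, the regularity lemma for global generation, and the Quot scheme for the return to boundedness are exactly the ingredients of the original proof.
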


\begin{definition}
 Let $\acal$ be a coherent sheaf. We call \textbf{hat-slope} the rational number
\begin{equation*}
 \hat{\mu}(\acal)\doteqdot\frac{\beta_{\dim \acal-1}(\acal)}{\beta_{\dim \acal}(\acal)},
\end{equation*}
where $\beta_i(\acal)$ is defined as the coefficient of $x^i$ of the Hilbert polynomial of $\acal$ multiplied by $i!$, i.e., if $\hilpol{\ecal}(x)=\sum_{i=0}^{\dim\acal}\beta_i \frac{x^i}{i!}$ then $\beta_i(\acal)\doteqdot\beta_i$.
\end{definition}

\begin{lemma}[Lemma $2.5$ in \cite{Grothendieck1}]
Let $f:Y\to S$  be a projective morphism of Noetherian schemes and denote by $\ocal_Y(1)$ a line bundle on $Y$, which is very ample relative to $S$. Let $\acal$ be a coherent sheaf on $Y$ and $\mathfrak{Q}$ the set of isomorphism classes of quotients sheaves $\qcal$ of $\acal_s$ for $s$ running over the points of $S$. Suppose that the dimension of $Y_s$ is $\leq r$ for all $s$. Then the coefficient $\beta_r(\qcal)$ is bounded from above and below, and $\beta_{r-1}(\qcal)$ is bounded from below. If $\beta_{r-1}(\qcal)$ is bounded from above, then the family of sheaves $\qcal/T(\qcal)$ is bounded.
\end{lemma}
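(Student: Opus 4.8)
The plan is to deduce the four assertions from the additivity of Hilbert polynomials in the defining exact sequence, together with a single uniform Castelnuovo--Mumford regularity bound which then feeds into the boundedness criterion of Lemma \ref{lemma-equiv-boundness-conditions}. Write $\fcal\doteqdot\ker(\acal_s\to\qcal)$, so that $0\to\fcal\to\acal_s\to\qcal\to0$ is exact and every coefficient is additive: $\beta_i(\acal_s)=\beta_i(\fcal)+\beta_i(\qcal)$. I would first record that the Hilbert polynomials $\hilpol{\acal_s}$ take only finitely many values as $s$ ranges over $S$: by generic flatness $\acal$ is flat over a dense open of each component of $S_{\mathrm{red}}$, where $\hilpol{\acal_s}$ is constant, and one recurses by Noetherian induction on the closed complement. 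In particular each $\beta_i(\acal_s)$ is uniformly bounded above and below.

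Granting this, the first two assertions are essentially formal. The numbers $\beta_r(\acal_s)$, $\beta_r(\fcal)$, $\beta_r(\qcal)$ are nonnegative multiplicities, so $0\le\beta_r(\qcal)\le\beta_r(\acal_s)$ is bounded on both sides. Additivity gives $\beta_{r-1}(\qcal)=\beta_{r-1}(\acal_s)-\beta_{r-1}(\fcal)$, and since $\beta_{r-1}(\acal_s)$ is bounded, a lower bound for $\beta_{r-1}(\qcal)$ is exactly an upper bound for $\beta_{r-1}(\fcal)$, uniform over all subsheaves $\fcal\subseteq\acal_s$.

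The crux is this upper bound, together with a regularity bound for the final assertion; I would obtain both by induction on the fibre dimension $r$ via generic hyperplane sections. For a section $H$ of $\ocal_Y(1)$ that is a non-zero-divisor on the sheaves in play, the sequence $0\to\fcal(-1)\to\fcal\to\fcal_H\to0$ gives $\hilpol{\fcal_H}(m)=\hilpol{\fcal}(m)-\hilpol{\fcal}(m-1)$, whence $\beta_{r-1}(\fcal_H)=\beta_r(\fcal)$ and $\beta_{r-2}(\fcal_H)=\beta_{r-1}(\fcal)-\tfrac12\beta_r(\fcal)$. Because $\beta_r(\fcal)$ is already controlled, bounding $\beta_{r-1}(\fcal)$ from above is equivalent to bounding the leading-minus-one coefficient of the subsheaf $\fcal_H\subseteq\acal_{s,H}$ on $Y_s\cap H$, a scheme of dimension $\le r-1$ --- exactly the inductive hypothesis, the base case $r\le1$ being handled directly on the fibre curve via its maximal slope. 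Running the same hyperplane induction with the now two-sided bounds on $\beta_r(\qcal)$ and $\beta_{r-1}(\qcal)$ produces a uniform bound $\text{reg}(\qcal/T(\qcal))\le C$; by Lemma \ref{lemma-equiv-m-regular} this regularity pins down all remaining Hilbert coefficients, so the polynomials $\hilpol{\qcal/T(\qcal)}$ form a finite set, and Lemma \ref{lemma-equiv-boundness-conditions} then yields boundedness of the family $\{\qcal/T(\qcal)\}$.

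I expect the main obstacle to be making the hyperplane section uniform across the infinite family of varying subsheaves $\fcal$ and quotients $\qcal$: a general $H$ is innocuous for one fixed sheaf, but here the sheaves move. I would circumvent this by passing to the universal hyperplane, i.e. replacing $S$ by a stratification of the relative projective space of sections of $\ocal_Y(1)$ and working over $Y\times_S\pp$ with the tautological section, so that ``general $H$'' becomes a generic point of a new Noetherian base to which generic flatness and Noetherian induction apply afresh. The remaining technical point is to control associated points and torsion so that $H$ is a non-zero-divisor and $\fcal_H\hookrightarrow\acal_{s,H}$ stays injective off a closed locus of strictly smaller dimension, which is precisely where the purity reduction to $\qcal/T(\qcal)$ is genuinely used.
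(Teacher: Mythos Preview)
The paper does not give its own proof of this lemma: it is quoted verbatim as Lemma~2.5 of \cite{Grothendieck1} and used as a black box, with Proposition~\ref{prop-limitatezza-fam-quozienti-slope-buonded} immediately drawn as a corollary. So there is nothing in the paper to compare your argument against.

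That said, your outline is the standard route to Grothendieck's lemma: additivity of the $\beta_i$ in the short exact sequence $0\to\fcal\to\acal_s\to\qcal\to0$, Noetherian induction via generic flatness to pin down the finitely many $\hilpol{\acal_s}$, and then a hyperplane-section induction on the fibre dimension to propagate the bound on $\beta_{r-1}(\fcal)$ and the regularity estimate. The one place to be careful is exactly the obstacle you flag: making the choice of hyperplane uniform over the family. Passing to the universal hyperplane and stratifying the new base is the correct fix, and it is essentially how the argument is organised in Grothendieck's original and in the treatment in \cite{HLbook}.
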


\begin{proposition}\label{prop-limitatezza-fam-quozienti-slope-buonded}
 Let $\acal$ be a flat family of coherent sheaves on the fibres of a projective morphism $f:Y\to S$ of Noetherian schemes. Then the family of torsion free quotient $\qcal$ of $\acal_s$ for $s\in S$ with hat slope bounded from above is a bounded family.
\end{proposition}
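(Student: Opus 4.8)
The plan is to reduce the statement to the quoted boundedness lemma of Grothendieck (Lemma $2.5$ in \cite{Grothendieck1}), the only real work being to translate the bound on the hat-slope into a bound on the subleading Hilbert coefficient. Since $f$ is projective and $S$ is Noetherian, the fibre dimensions are uniformly bounded; I will assume, as holds in the intended applications, that the fibres are equidimensional of dimension $r$, so that every nonzero torsion free quotient $\qcal$ of $\acal_s$ satisfies $\dim\qcal=r$. In particular $\beta_r(\qcal)>0$ and the hat-slope takes the form $\hat{\mu}(\qcal)=\beta_{r-1}(\qcal)/\beta_r(\qcal)$.

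First I would record the elementary observation that drives the reduction: a torsion free sheaf has no torsion, so $T(\qcal)=0$ and hence $\qcal/T(\qcal)=\qcal$. Consequently Grothendieck's lemma, whose conclusion concerns the family $\{\qcal/T(\qcal)\}$, will directly yield boundedness of $\{\qcal\}$ itself once its hypothesis — that $\beta_{r-1}(\qcal)$ be bounded from above — is verified.

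Next I would verify that hypothesis. Applying Grothendieck's lemma to the coherent sheaf $\acal$ and to the set of isomorphism classes of quotients of the $\acal_s$, the leading coefficient $\beta_r(\qcal)$ is bounded both from above and from below, say $\beta_r(\qcal)\in[m_0,M_0]$ with $0<m_0\leq M_0$, the positivity of $m_0$ coming from $\dim\qcal=r$. Writing $\mu_0$ for a uniform upper bound of the hat-slopes in the family, we have $\beta_{r-1}(\qcal)=\hat{\mu}(\qcal)\,\beta_r(\qcal)\leq\mu_0\,\beta_r(\qcal)$, and since $\beta_r(\qcal)$ varies in $[m_0,M_0]$ the right-hand side is at most $\max\{\mu_0 m_0,\,\mu_0 M_0\}$, a constant independent of $s$ and of $\qcal$. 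Thus $\beta_{r-1}(\qcal)$ is bounded from above, and a second application of Grothendieck's lemma gives that $\{\qcal/T(\qcal)\}=\{\qcal\}$ is bounded, as required. One could equally conclude via Lemma \ref{lemma-equiv-boundness-conditions}, since the bounds on $\beta_r$ and $\beta_{r-1}$ confine the Hilbert polynomials to a finite set.

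The hard part is not the boundedness machinery but the dimension bookkeeping: one must be sure that every torsion free quotient has dimension exactly $r$, so that the hat-slope is read off from $\beta_r$ and $\beta_{r-1}$ and so that $\beta_r(\qcal)$ stays bounded away from $0$. If the fibres of $f$ fail to be equidimensional this breaks down, and the fix is to stratify $S$ by the value of $\dim Y_s$, run the argument on each of the finitely many strata, and combine the resulting bounded families.
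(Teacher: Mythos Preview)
Your argument is correct and is exactly the reduction the paper has in mind: the paper's own proof consists of the single sentence ``It is an easy corollary of Lemma~2.5 in \cite{Grothendieck1},'' and you have spelled out precisely that corollary --- torsion free forces $\qcal/T(\qcal)=\qcal$, the bound on $\hat\mu$ together with the two-sided bound on $\beta_r$ yields an upper bound on $\beta_{r-1}$, and Grothendieck's lemma then gives boundedness.

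One small slip worth flagging: your aside that ``the bounds on $\beta_r$ and $\beta_{r-1}$ confine the Hilbert polynomials to a finite set'' is not right as stated, since bounding only the top two coefficients of a degree-$r$ polynomial does not control the remaining $r-1$ coefficients; finiteness of the Hilbert polynomials is a \emph{consequence} of boundedness (via Lemma~\ref{lemma-equiv-boundness-conditions}), not an independent route to it. This does not affect your main argument, which already concludes via Grothendieck's lemma directly.
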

\begin{proof}
 It is an easy corollary of Lemma 2.5 in \cite{Grothendieck1}.
\end{proof}

Thanks to Proposition \ref{prop-limitatezza-fam-quozienti-slope-buonded} the family $\mathfrak{F}$ is bounded. Due the previous considerations both families $\mathfrak{F_0}$ and $\mathfrak{F_1}$ can be regarded as subfamilies of $\mathfrak{F}$ and therefore $\mathfrak{F_0}$ and $\mathfrak{F_1}$ are bounded families as well. Thanks to Proposition \ref{lemma-equiv-boundness-conditions} the sets $\{\hilpol{\fcal} \,|\, \fcal\in\mathfrak{F_0}\}$ and $\{\hilpol{\fcal} \,|\, \fcal\in\mathfrak{F_1}\}$ are finite.
\subsection{Quot schemes}\label{sec-quot-schemes}
Let $\acal$ be a coherent sheaf over $X$ flat over the fibres of $f:X\to S$. Let $P\in\qq[x]$ be a polynomial. Define a functor 
\begin{equation*}
\qttfunctor\doteqdot\quotfunctor_{X/S}(\acal,P):(\text{Sch}/S)\to (\text{Sets})
\end{equation*}
as follows: if $T\to S$ is scheme over $S$ let $\qttfunctor(T)$ be the set of all $T$-flat coherent quotient sheaves $\acal_T\twoheadrightarrow\qcal$ with Hilbert polynomial $P$, where $\acal_T$ denotes the sheaf over $X_T=X\times_{S}T$ induced by $\acal$. If $g:T'\to T$ is an $S$-morphism, let $\qttfunctor(g):\qttfunctor(T)\to\qttfunctor(T')$ be the map that sends $\acal_T\twoheadrightarrow\qcal$ to $\acal_{T'}\twoheadrightarrow g_X^*\qcal$, where $g_X:X_{T'}\to X_T$ is the map induced by $g$.
\begin{theorem}[Theorem 2.2.4 in \cite{HLbook}]\label{teo-quot-scheme}
The functor $\quotfunctor_{X/S}(\acal,P)$ is represented by a projective $S$-scheme $\pi:\quot_{X/S}(\acal,P)\to S$.
\end{theorem}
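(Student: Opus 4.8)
The statement is Grothendieck's representability theorem for the relative Quot functor, and the strategy I would follow is the classical one: embed $\qttfunctor$ into a relative Grassmannian and realize its image as a closed subscheme. First I would establish boundedness. Since the Hilbert polynomial $P$ is fixed, the hat-slope of every quotient $\acal_s\twoheadrightarrow\qcal$ with Hilbert polynomial $P$ is constant, so the family of all such quotients (as $s$ ranges over points of $S$) is bounded by Proposition \ref{prop-limitatezza-fam-quozienti-slope-buonded} together with Lemma \ref{lemma-equiv-boundness-conditions}. Boundedness yields a uniform regularity bound, hence an integer $m$ such that for every such quotient both $\qcal$ and its kernel are $m$-regular. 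Enlarging $m$ if necessary and invoking cohomology-and-base-change against the flatness of $\acal$ over $S$, I would arrange that $\vecspa\doteqdot f_*(\acal(m))$ is locally free on $S$, commutes with base change, and has rank $\hilpol{\acal_s}(m)$.

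Next comes the Grassmannian embedding. For an $S$-scheme $T$ and a class $[\acal_T\twoheadrightarrow\qcal]\in\qttfunctor(T)$, pushing the twist $\qcal(m)$ forward along $f_T$ gives, by $m$-regularity and base change, a locally free quotient $\vecspa_T\twoheadrightarrow f_{T*}(\qcal(m))$ of rank $P(m)$. This defines a natural transformation $\qttfunctor\to\underline{\mathrm{Grass}}(\vecspa,P(m))$ into the functor of rank-$P(m)$ quotients of $\vecspa$, which is represented by a projective $S$-scheme $G$. The transformation is injective: because $\qcal(m)$ is globally generated (Lemma \ref{lemma-equiv-m-regular}) and $R^if_{T*}(\qcal(m))=0$ for $i>0$, the canonical surjection $f_T^*\vecspa_T=f_T^*f_{T*}(\acal_T(m))\to\acal_T(m)$ recovers $\acal_T(m)\twoheadrightarrow\qcal(m)$ from the quotient on global sections, so the original quotient is determined by its image in $G$.

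Finally I would identify this image as a closed subscheme. On $G$ there is a universal exact sequence $0\to\mathcal{S}\to\vecspa_G\to\Quni\to0$; pulling back to $X_G$ and composing $f_G^*\mathcal{S}\to f_G^*\vecspa_G\to\acal_G(m)$ produces a universal quotient of $\acal_G(m)$, whose cokernel I untwist by $-m$ to obtain a sheaf $\qcal^{\mathrm{univ}}$ on $X_G$. Applying Grothendieck's flattening stratification to $\qcal^{\mathrm{univ}}$ over $G$, together with semicontinuity of Hilbert polynomials, the locus $G_P\subseteq G$ where $\qcal^{\mathrm{univ}}$ is $G$-flat with Hilbert polynomial $P$ is locally closed; the uniform regularity from the boundedness input forces it to be closed, and $G_P\to S$ is projective since $G\to S$ is. It then remains to verify that $G_P$ represents $\qttfunctor$, i.e. that the universal quotient construction is a two-sided inverse to the Grassmannian embedding on $T$-points, where the $m$-regularity of the kernel is used once more to guarantee the tautological quotient restricts correctly on every fibre.

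The hard part will be this last step: proving that the relevant locus is genuinely closed rather than merely locally closed, which rests essentially on the uniform regularity coming from boundedness, and then checking that the functor of points of $G_P$ matches $\qttfunctor(T)$ for arbitrary, possibly non-reduced, test schemes $T$ — the flattening stratification and the compatibility of pushforward with base change over such $T$ are the delicate technical ingredients.
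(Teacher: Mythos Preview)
The paper does not prove this theorem; it is stated as a citation of Theorem~2.2.4 in \cite{HLbook} and used as a black box. Your sketch follows the classical Grothendieck--Mumford strategy (uniform regularity, Grassmannian embedding, flattening stratification), which is indeed the argument in the cited reference, so in spirit you are aligned with what the paper defers to.

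One small caveat on your boundedness step: Proposition~\ref{prop-limitatezza-fam-quozienti-slope-buonded} as stated in the paper concerns \emph{torsion-free} quotients with bounded hat-slope, whereas the Quot functor parametrizes arbitrary coherent quotients with fixed Hilbert polynomial, which need not be torsion free. The uniform regularity bound in the textbook proof comes from a slightly different argument (iterated hyperplane restriction and Mumford's regularity lemma, cf.\ \cite{HLbook} Lemma~1.7.9), not directly from that proposition. This does not affect the overall architecture of your sketch, but you should not cite that particular proposition for this step.
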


Consider now a decorated coherent sheaf $(\acal,\varphi)$ over $X$, flat over the fibre of $f:X\to S$ and let $P\in\qq[x]$ be a polynomial. Define the functor
\begin{equation*}
\qttfunctor^\mathfrak{0}\doteqdot\quotfunctor^\mathfrak{0}_{X/S}(\acal,\varphi,P):(\text{Sch}/S)\to (\text{Sets})
\end{equation*}
as follows: if $T\to S$ is scheme over $S$ let $\qttfunctor^\mathfrak{0}(T)$ be the set of all $T$-flat coherent quotient sheaves $\acal_T\twoheadrightarrow\qcal$ with Hilbert polynomial $P$ such that $\eps(\rest{\varphi_T}{\ker(\acal_T\twoheadrightarrow\qcal)})=0$, where $\acal_T$ denotes the sheaf over $X_T=X\times_{S}T$ induced by $\acal$ and $\varphi_T:(\acal_T)\ab\to\acal\ab\stackrel{\varphi}{\longrightarrow}\linebu$ is the morphism induced by $\varphi$. If $g:T'\to T$ is an $S$ morphism, let $\qttfunctor^\mathfrak{0}(g):\qttfunctor^\mathfrak{0}(T)\to\qttfunctor^\mathfrak{0}(T')$ be the map that sends $\acal_T\twoheadrightarrow\qcal$ to $\acal_{T'}\twoheadrightarrow g_X^*\qcal$, note that $g_X^*\varphi_T$ is zero if restricted on $\ker(\acal_{T'}\twoheadrightarrow g_X^*\qcal)$.
\begin{theorem}
The functor $\quotfunctor^\mathfrak{0}_{X/S}(\acal,\varphi,P)$ is represented by a projective $S$-scheme $\pi^\mathfrak{0}:\quot^\mathfrak{0}_{X/S}(\acal,\varphi,P)\to S$ that is a closed subscheme of $\quot_{X/S}(\acal,P)$.
\end{theorem}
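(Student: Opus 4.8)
The plan is to realize $\quot^\mathfrak{0}_{X/S}(\acal,\varphi,P)$ as the scheme-theoretic vanishing locus, inside the ordinary Quot scheme, of the homomorphism obtained by restricting $\varphi$ to the $a$-th power of the universal subsheaf. First I would invoke Theorem \ref{teo-quot-scheme} to produce the projective $S$-scheme $Q\doteqdot\quot_{X/S}(\acal,P)$ together with its universal quotient $\acal_Q\twoheadrightarrow\qcal$, whose kernel I denote $\fcal\subset\acal_Q$; recall that $\qcal$ is flat over $Q$. The decoration pulls back to $\varphi_Q:(\acal_Q)\ab\to\linebu_Q$, and I set
\[
\Psi\doteqdot\rest{\varphi_Q}{\fcal\ab}:\fcal\ab\longrightarrow\linebu_Q,
\]
the composite of $\varphi_Q$ with the natural map $\fcal\ab\to(\acal_Q)\ab$ induced by $\fcal\hookrightarrow\acal_Q$.

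The key preliminary observation is that, because $\qcal$ is flat over $Q$, for every $S$-scheme $g:T\to Q$ the pullback of $0\to\fcal\to\acal_Q\to\qcal\to0$ remains exact, so that $\ker(\acal_T\twoheadrightarrow g^*\qcal)=g^*\fcal$ and hence $g^*\Psi=\rest{(\varphi_T)}{(g^*\fcal)\ab}$. Therefore the subfunctor $\quotfunctor^\mathfrak{0}_{X/S}(\acal,\varphi,P)$ of $\quotfunctor_{X/S}(\acal,P)$ consists exactly of those $g$ for which $\eps(\rest{\varphi_T}{g^*\fcal})=0$, i.e. for which $g^*\Psi=0$. So everything reduces to showing that the locus where $\Psi$ vanishes is represented by a closed subscheme of $Q$.

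For this I would use the standard principle that vanishing of a homomorphism into a locally free sheaf is a closed condition. Writing $p:X_Q\doteqdot X\times_S Q\to Q$ for the projection, I choose $m\gg0$ — a single $m$ suffices since $Q$ is of finite type over the Noetherian scheme $S$ — so that $p_*\linebu_Q(m)$ is locally free and commutes with arbitrary base change (here $\linebu_Q$ is locally free, hence flat over $Q$), and so that $\fcal\ab(m)$ is globally generated relative to $p$. Pushing forward $\Psi(m)$ yields a morphism of coherent sheaves on $Q$
\[
v\doteqdot p_*\big(\Psi(m)\big):p_*\big(\fcal\ab(m)\big)\longrightarrow p_*\big(\linebu_Q(m)\big),
\]
whose target is locally free; I then let $\quot^\mathfrak{0}_{X/S}(\acal,\varphi,P)\hookrightarrow Q$ be the closed subscheme cut out by the ideal sheaf generated by the entries of $v$, equivalently by the image of $p_*(\fcal\ab(m))\otimes(p_*\linebu_Q(m))^{\vee}\to\ocal_Q$. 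A straightforward adjunction argument — using that relative global generation of $\fcal\ab(m)$ is preserved under pullback and that the counit $p^*p_*\fcal\ab(m)\twoheadrightarrow\fcal\ab(m)$ is surjective — shows that for every $g:T\to Q$ one has $g^*v=0$ if and only if $g^*\Psi(m)=0$, i.e. if and only if $g^*\Psi=0$. Hence $g$ factors through $\quot^\mathfrak{0}_{X/S}(\acal,\varphi,P)$ exactly when $g\in\quotfunctor^\mathfrak{0}(T)$, which proves representability; being a closed subscheme of the projective $S$-scheme $Q$, it is itself projective over $S$, with structure morphism $\pi^\mathfrak{0}$.

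The main obstacle is precisely this last base-change verification: the universal subsheaf $\fcal$, and hence $\fcal\ab$, need not be flat over $Q$, so $p_*(\fcal\ab(m))$ does not in general commute with base change and one cannot simply identify $g^*v$ with $p_{T*}(g^*\Psi(m))$. The resolution is that I never need base change on the source: I only use base change on the (locally free) target $\linebu_Q(m)$ together with the fact that relative global generation of $\fcal\ab(m)$ pulls back, and via adjunction these two facts alone suffice to detect the vanishing of $g^*\Psi$ by that of $g^*v$. Everything else is the formal transfer of the universal property from $Q$ to its closed subscheme.
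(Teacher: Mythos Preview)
Your proof is correct and follows the same approach as the paper: both show that the condition $\rest{\varphi}{\fcal\ab}=0$ on the universal kernel is closed, hence cuts out a closed (and therefore projective) subscheme of $\quot_{X/S}(\acal,P)$. The paper merely cites Theorem~1.6 of \cite{Sernesi} for this principle, whereas you spell out the construction explicitly as the vanishing locus of $p_*(\Psi(m))$ and handle the base-change subtlety via adjunction and relative global generation; your argument is thus a detailed implementation of what the paper leaves to the reference.
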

\begin{proof}
 The additional property is closed and therefore, using the same arguments of the proof of Theorem $1.6$ in \cite{Sernesi}, one can prove that $\quot^\mathfrak{0}_{X/S}(\acal,\varphi,P)=\{q\in\quot_{X/S}(\acal,P) \,|\, \eps(\rest{\varphi}{\ker(q)})=0\}$ is a closed projective subscheme of $\quot_{X/S}(\acal,P)$.
\end{proof}
\subsection{Openness of semistability condition}\label{sec-opennes-semistab-condition}
\begin{proposition}\label{prop-eps-semistability-open-condition}
 Let $f:X\to S$ be a projective morphism of Noetherian schemes and let $(\ecal,\varphi)$ be a flat family of decorated sheaves over the fibre of $f$. The set of points $s\in S$ such that $(\ecal_s,\varphi_s)$ is \fr-(semi)stable with respect to $\delta$ is open in $S$.
\end{proposition}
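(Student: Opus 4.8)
The plan is to prove openness by expressing the \emph{non}-semistable locus as the image of a proper morphism, hence closed. The key input is the finiteness result established just above: for a flat family $(\ecal,\varphi)$, the destabilizing decorated subsheaves fall into the bounded families $\mathfrak{F_0}$ and $\mathfrak{F_1}$, so the sets of their Hilbert polynomials $\{\hilpol{\fcal}\mid\fcal\in\mathfrak{F_0}\}$ and $\{\hilpol{\fcal}\mid\fcal\in\mathfrak{F_1}\}$ are finite. First I would dualize the problem: instead of parametrizing destabilizing subsheaves $\fcal\hookrightarrow\ecal_s$, I parametrize the torsion-free quotients $\qcal=\coker(\fcal\hookrightarrow\ecal_s)$, using the equivalences \eqref{eq-equiv-destab-su-quoz-slope} and \eqref{eq-equiv-destab-su-quoz-polinomio} which translate the destabilizing inequality for $\fcal$ into an inequality on $\hilpolred{\qcal}$.

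The core construction uses the relative Quot schemes of Section \ref{sec-quot-schemes}. For each of the finitely many polynomials $P$ arising as a Hilbert polynomial of a quotient by a member of $\mathfrak{F_1}$ (the case $\epss{\fcal}=1$), the kernel $\fcal$ satisfies $\epss{\fcal}=1$, so there is no constraint forcing $\varphi$ to vanish on $\fcal\ab$; here I would use the ordinary relative Quot scheme $\quot_{X/S}(\ecal,P)\to S$ from Theorem \ref{teo-quot-scheme}. For each polynomial $P$ arising from a member of $\mathfrak{F_0}$ (the case $\epss{\fcal}=0$), the kernel satisfies $\epss{\fcal}=0$, i.e.\ $\varphi$ vanishes on $\fcal\ab$, which is exactly the closed condition cutting out $\quot^\mathfrak{0}_{X/S}(\ecal,\varphi,P)\hookrightarrow\quot_{X/S}(\ecal,P)$. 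Over each such Quot scheme there is a locally closed subscheme where the universal quotient is torsion free and the relevant inequality on $\hilpolred{\qcal}$ holds; since the inequality compares reduced Hilbert polynomials, on a flat family with fixed $P$ it is a union of connected components, hence closed. The disjoint union of these finitely many Quot schemes maps to $S$ by the structure morphisms $\pi$ and $\pi^\mathfrak{0}$, which are \emph{projective} by Theorem \ref{teo-quot-scheme}, hence proper.

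The conclusion then follows from properness: the locus of $s\in S$ for which there exists a destabilizing decorated subsheaf of $(\ecal_s,\varphi_s)$ is precisely the image under these proper morphisms of the closed subschemes just described, and the image of a closed set under a proper map is closed. Therefore the non-semistable locus is closed and the semistable locus is open. For the distinction between semistable and stable one replaces the strict inequality $\succ$ by $\succeq$ (equivalently $\ssucceq$ as in the Notation), which changes a closed locus by components where equality holds but leaves the argument structurally identical.

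The main obstacle I anticipate is bookkeeping around $\epss{\fcal}$ rather than any deep geometry. The function $\epss{\fcal}$ records whether $\rest{\varphi}{\fcal\ab}\not\equiv0$, and the destabilizing inequality takes a genuinely different form in the two cases $\epss{\fcal}=0$ and $\epss{\fcal}=1$ (note the two constants $C_0$ and $C_1$ in \eqref{eq-equiv-destab-su-quoz-slope}); one must treat these on separate Quot schemes and verify that $\epss{\fcal}=0$ is the closed condition represented by $\quot^\mathfrak{0}$ while $\epss{\fcal}=1$ imposes no extra condition. The care point is that passing to the quotient $\qcal$ loses the decoration (as noted in Remark \ref{rem-def-ddeg-al-quoziente}, a morphism on $\qcal$ need not exist), so one must phrase the closed condition on the kernel, not the quotient; using the functor $\quotfunctor^\mathfrak{0}$ of Section \ref{sec-quot-schemes}, whose defining condition is imposed precisely on $\ker(\acal_T\twoheadrightarrow\qcal)$, resolves this cleanly.
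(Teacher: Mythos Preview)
Your proposal is correct and follows essentially the same approach as the paper: reduce to finitely many Hilbert polynomials via the boundedness of $\mathfrak{F_0}$ and $\mathfrak{F_1}$, parametrize the corresponding quotients by the projective schemes $\quot_{X/S}(\ecal,P'')$ (for the $\epss{\fcal}=1$ case) and $\quot^{\mathfrak{0}}_{X/S}(\ecal,\varphi,P'')$ (for the $\epss{\fcal}=0$ case), and conclude that the non-semistable locus is a finite union of the closed images $S(P'')$ and $S^{\mathfrak{0}}(P'')$. Your additional remarks on why the $\epss{\fcal}=0$ condition must be imposed on the kernel via $\quotfunctor^{\mathfrak{0}}$, and on handling the stable case by replacing $\succ$ with $\succeq$, match the paper's treatment exactly.
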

\begin{proof}
 Let $\hilpol{}=\hilpol{\ecal_s}$ and $\hilpolred{}=\hilpolred{\ecal_s}$ the Hilbert polynomial and, respectively, the reduced Hilbert polynomial of $\ecal$. 
We first consider the semistable case. Let
\begin{equation}
 A\doteqdot\{P''\in\qq[x] \,|\, \exists\, s\in S, \exists\, q:\ecal_s\twoheadrightarrow\qcal \text{ such that }\hilpol{\qcal}=P'' \text{ and } \ker(q)\in\mathfrak{F}\}
\end{equation}
and, for $i=0,1$, let
\begin{equation*}
 A_i\doteqdot\{P''\in\qq[x] \,|\, \exists\, s\in S, \exists\, q:\ecal_s\twoheadrightarrow\qcal \text{ such that }\hilpol{\qcal}=P'' \text{ and } \ker(q)\in\mathfrak{F_i}\}
\end{equation*}
The sets $A$, $A_0$ and $A_1$ are finite because the families $\mathfrak{F}$, $\mathfrak{F_0}$ and $\mathfrak{F_1}$ are bounded as proved in Section \ref{sec-families-quotients}. For any $P''\in A_1$ consider the Quot scheme $\pi:\quotfunctor_{X/S}(\ecal,P'')\to S$, while for $P''\in A_0$ consider the Quot scheme $\pi^\mathfrak{0}:\quotfunctor^\mathfrak{0}_{X/S}(\ecal,\varphi,P'')\to S$. Both images $S(P'')$ of $\pi$ (for $P''\in A_1$) and $S^{\mathfrak{0}}(P'')$ of $\pi^{\mathfrak{0}}$ (for $P''\in A_0$) are closed sets of $S$. Therefore the union
\begin{equation*}
 \left(\bigcup_{P''\in A_0} S^{\mathfrak{0}}(P'')\right) \cup \left(\bigcup_{P''\in A_1} S(P'')\right)
\end{equation*}
is a closed subset of $S$, in fact it is finite union of closed sets. Finally is easy to see that $(\ecal_s,\varphi_s)$ is semistable if and only if $s$ is \textit{not} in the above union.\\
The proof of the stable case is similar to the semistable case, it is indeed sufficient to consider, for $i=0,1$, the sets
\begin{align*}
 A^{\text{st}}_i\doteqdot\{P''\in A \,|\, \text{ with } \hilpolred{\qcal}\preceq\hilpolred{}+(1-i)(-a\delta C_0)+i(a\delta C_1)\}
\end{align*}
and continue as in the semistable case.
\end{proof}
\subsection{Relative maximal destabilizing subsheaf}\label{sec-relative-eps-maximal-dest-subsheaf}
\begin{theorem}\label{teo-rel-max-dest}
 Let $(X,\amplebu{1})$, $S$, $f:X\to S$ and $(\ecal,\varphi)$ as before. Then there is an integral $k$-scheme $T$ of finite type, a projective birational morphism $g:T\to S$, a dense open subset $U\subset T$ and a flat quotient $\qcal$ of $\ecal_T$ such that for all points $t\in U$, $\fcal_t\doteqdot\ker(\ecal_t\twoheadrightarrow\qcal_t)$ with the induced morphism $\rest{\varphi_t}{\fcal_t}$ is the maximal destabilizing subsheaf of $(\ecal_t,\varphi_t)$ or $\qcal_t=\ecal_t$.\\
 Moreover the pair $(g,\qcal)$ is universal in the sense that if $g':T'\to S$ is any dominant morphism of $k$-integral schemes and $\qcal'$ is a flat quotient of $\ecal_{T'}$, satisfying the same property of $\qcal$, there is an $S$-morphism $h:T'\to T$ such that $h^*_X(\qcal)=\qcal'$.
\end{theorem}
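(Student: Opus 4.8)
The plan is to follow the construction of the relative Harder--Narasimhan filtration (Theorem 2.3.2 in \cite{HLbook}), replacing the classical destabilizing subsheaf by the \fr-maximal destabilizing subsheaf of Proposition \ref{prop-maximal-dest-subsheaf}. Since $S$ is integral, I would work first over its generic point $\eta$, with function field $K=k(S)$. Passing to $\overline{K}$, Proposition \ref{prop-maximal-dest-subsheaf} produces a \emph{unique} \fr-maximal destabilizing subsheaf of $(\ecal_{\overline{\eta}},\varphi_{\overline{\eta}})$; by uniqueness it is invariant under $\mathrm{Gal}(\overline{K}/K)$ and hence (using $\mathrm{char}\,k=0$) descends to a saturated subsheaf $\fcal_\eta\subset\ecal_\eta$ over $K$, with torsion free quotient $\ecal_\eta\twoheadrightarrow\qcal_\eta$. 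If $(\ecal_\eta,\varphi_\eta)$ is already \fr-semistable, then $\fcal_\eta=0$ and one simply takes $T=S$, $g=\mathrm{id}$ and $\qcal=\ecal$, so that $\qcal_t=\ecal_t$ over the open \fr-semistable locus of Proposition \ref{prop-eps-semistability-open-condition}; I may therefore assume $\fcal_\eta\neq 0$.

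By the boundedness of the families $\mathfrak{F_0}$ and $\mathfrak{F_1}$ established in Section \ref{sec-families-quotients}, the Hilbert polynomial $P''\doteqdot\hilpol{\qcal_\eta}$ lies in a finite set of polynomials, so $[\qcal_\eta]$ is a $K$-rational point of the projective $S$-scheme $\quot_{X/S}(\ecal,P'')$ of Theorem \ref{teo-quot-scheme} (when $\epss{\fcal_\eta}=0$ one may even work inside the closed subscheme $\quot^{\mathfrak{0}}_{X/S}(\ecal,\varphi,P'')$). I would then let $T\doteqdot\overline{\{[\qcal_\eta]\}}$ with its reduced structure and let $g\colon T\to S$ be the restriction of the structural projection. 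Then $T$ is integral, $g$ is projective as the restriction of a projective morphism, and since the fibre of $g$ over $\eta$ is the single reduced $K$-point $[\qcal_\eta]$ the map $g$ is birational. Pulling back the universal quotient along $T$ yields a $T$-flat quotient $\ecal_T\twoheadrightarrow\qcal$, and as $\ecal_T$ and $\qcal$ are both $T$-flat the kernel $\fcal\doteqdot\ker(\ecal_T\twoheadrightarrow\qcal)$ is $T$-flat too, with $\fcal_t=\ker(\ecal_t\twoheadrightarrow\qcal_t)$ and $\fcal_t$ torsion free.

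To obtain $U$ I would characterise the property ``$\fcal_t$ is the \fr-maximal destabilizing subsheaf'' by the two conditions that (i) $(\fcal_t,\rest{\varphi_t}{\fcal_t})$ is \fr-semistable and (ii) $\hilpolred{\fcal_t}^\eps$ strictly dominates $\hilpolred{\wcal}^\eps$ for every subsheaf $\wcal$ of $\qcal_t$. After shrinking $T$ so that $\eps(\rest{\varphi_t}{\fcal_t})$ is locally constant, $\fcal$ becomes a flat family of decorated sheaves, and condition (i) is open by Proposition \ref{prop-eps-semistability-open-condition}; condition (ii) is controlled by Proposition \ref{prop-limitatezza-fam-quozienti-slope-buonded} together with the semicontinuity of the reduced \fr-Hilbert polynomial. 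Both hold at $\eta$ by construction, so their common locus $U$ is open, and it is dense since $T$ is integral. For the universal property I would again invoke uniqueness: given a second pair $(g',\qcal')$ with $g'\colon T'\to S$ dominant and $T'$ integral, flatness forces $\hilpol{\qcal'_{t'}}=P''$, and at the generic point $\eta'$ (which maps to $\eta$) the kernel $\fcal'_{\eta'}$ is the \fr-maximal destabilizing subsheaf of $\ecal_{\eta'}$, hence $\qcal'_{\eta'}=\qcal_\eta\otimes_K k(\eta')$ by uniqueness and compatibility of the destabilizer with field extension. Thus $\qcal'$ defines an $S$-morphism $T'\to\quot_{X/S}(\ecal,P'')$ sending $\eta'$ to the generic point $[\qcal_\eta]$ of $T$; since $T'$ is integral this morphism factors through the closure $T$, producing $h\colon T'\to T$ with $h^*_X(\qcal)=\qcal'$ by the universal property of the Quot scheme.

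The hard part will be condition (ii) in the construction of $U$, that is, the openness of the \emph{maximality} of the destabilizer. Unlike the classical situation one cannot reduce this to the \fr-semistability of a decorated quotient, because by Remark \ref{rem-def-ddeg-al-quoziente} the quotient $\qcal_t$ carries no decorated structure compatible with $\varphi$ in general. The delicate point is therefore to bound the \fr-reduced Hilbert polynomials of all competing subsheaves $\wcal\subset\qcal_t$ uniformly in $t$ using only the boundedness results of Section \ref{sec-families-quotients}, and to combine this with semicontinuity so that the maximality locus remains open; the local constancy of $\eps(\rest{\varphi_t}{\fcal_t})$ on $U$ must be verified simultaneously, so that $\fcal$ is genuinely a flat family of decorated sheaves in the sense of Definition \ref{def-flat-fam-over-fibre-morphism}.
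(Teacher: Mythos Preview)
Your strategy is sound but genuinely different from the paper's. You follow the classical generic--point route of \cite{HLbook}, Theorem 2.3.2: construct the \fr-maximal destabilizer over $\overline{K}$, descend by Galois invariance (using uniqueness), and take the closure of the resulting $K$-point in a Quot scheme. The paper instead works globally: it takes the finite sets $B_0,B_1$ of admissible quotient Hilbert polynomials, singles out the subsets $\check{B}_0,\check{B}_1$ of those whose Quot images cover $S$, equips them with an explicit order $\vartriangleleft$ encoding ``better destabilizer'', and picks the $\vartriangleleft$-minimal polynomial $P_-''$. The open set $U_-$ is then simply the complement of the union of the (finitely many, proper, closed) Quot images for polynomials strictly $\vartriangleleft$-smaller than $P_-''$; the remaining details are deferred to \cite{Frank-rest}. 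Your approach is conceptually cleaner and makes the universal property transparent via uniqueness; the paper's approach is more hands-on and bypasses the delicate openness question you flag in (ii) by building $U$ directly as a complement of closed Quot images.

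One point in your sketch needs correction: condition (ii) as stated, ``$\hilpolred{\fcal_t}^\eps$ strictly dominates $\hilpolred{\wcal}^\eps$ for every subsheaf $\wcal$ of $\qcal_t$'', does not make sense, because subsheaves of $\qcal_t$ carry no decoration induced by $\varphi$ (this is exactly the obstruction of Remark \ref{rem-def-ddeg-al-quoziente} you yourself cite). What you need instead is that no subsheaf $\wcal\subset\ecal_t$ has $\hilpolred{\wcal}^\eps\succ\hilpolred{\fcal_t}^\eps$, and none with equality has rank exceeding $\rk{\fcal_t}$. To make this open, you cannot argue via a decorated structure on $\qcal_t$; you must use the finiteness of the sets $A_0,A_1$ from Section \ref{sec-families-quotients}: for each of the finitely many polynomials $P'$ that would beat $\hilpol{\fcal_t}$ in the relevant ordering, the locus of $t$ admitting such a subsheaf is the (closed) image of the corresponding Quot or $\quot^{\mathfrak 0}$ scheme, and these images are proper since they miss $\eta$. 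This is precisely the mechanism the paper uses, so your ``hard part'' is in fact resolved by the same finite-polynomial bookkeeping; once you phrase (ii) this way the two arguments converge.
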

\begin{proof}
In the proof we apply the same arguments as in \cite{Frank-rest}. Define $B_1=A_1$ and $B_0=A_0'$, i.e.,
\begin{align*}
& B_0=\{P''\in A \,|\, \hilpolred{\qcal}\preceq\hilpolred{}-a\delta C_0\}\\
& B_1=\{P''\in A \,|\, \hilpolred{\qcal}\prec\hilpolred{}+a\delta C_1\}
\end{align*}
Then define
\begin{align*}
 & \check{B}_0\doteqdot\{ P''\in B_0 \,|\, \pi^{\mathfrak{0}}(\quot^{\mathfrak{0}}_{X/S}(\ecal,\varphi,P''))=S\}\\
 & \check{B}_1\doteqdot\{ P''\in B_1 \,|\, \pi(\quot_{X/S}(\ecal,P''))=S \text{ and } \;\forall s\in S\;\; \pi^{-1}(s)\not\subset\quot^{\mathfrak{0}}_{X/S}(\ecal,\varphi,P''))\}
\end{align*}
Note that $B_0\cup B_1$ and $\check{B}_0\cup\check{B}_1$ are nonempty. We want to define an order relation on $B_0,\check{B}_0,B_1$ and $\check{B}_1$ but first we need the following costruction: let $P_1'',P_2''$ be polynomials in $B_0,\check{B}_0,B_1$ or $\check{B}_1$; then there exist surjective morphisms $q_i:\ecal_s\to \qcal_i$ ($i=1,2$) such that $P_i''=\hilpol{\qcal_i}$. Define, for $i=1,2$, $P_i\doteqdot\hilpol{\ker(q_i)}$, $r_i\doteqdot\rk{\ker(q_i)}$ and $p_i=P_i/r_i$. We will say that the polynomials $P_i$ are \textit{associated} with the polynomials $P_i''$.\\

If $P_i''\in B_0$ or $\check{B}_0$ define the following ordering relation:
\begin{equation*}
 P_1''\vartriangleleft P_2'' \Longleftrightarrow p_1\succ p_2 \quad\text{or}\quad p_1=p_2 \text{ and } r_1>r_2,
\end{equation*}
otherwise, if $P_i''\in B_1$ or $\check{B}_1$, define:
\begin{equation*}
 P_1''\vartriangleleft P_2'' \Longleftrightarrow  p_1 - \frac{a\delta}{r_1}\succ p_2 - \frac{a\delta}{r_2} \quad\text{or}\quad p_1 - \frac{a\delta}{r_1} = p_2 - \frac{a\delta}{r_2}\text{ and } r_1>r_2
\end{equation*}
Let $P_-^{''i}$, for $i=0,1$, be a $\vartriangleleft$-minimal polynomial among all polynomials in $\check{B}_i$ and $P_-^i$ the associated polynomials. Then consider the following cases:
\begin{itemize}
 \item[Case $1$:] $p_-^0\succ p_-^1-\frac{a\delta}{r_-^1}$;
 \item[Case $2$:] $p_-^0\prec p_-^1-\frac{a\delta}{r_-^1}$;
 \item[Case $3$:] $p_-^0 = p_-^1-\frac{a\delta}{r_-^1}$ and $r_-^0 > r_-^1$;
 \item[Case $4$:] $p_-^0 = p_-^1-\frac{a\delta}{r_-^1}$ and $r_-^0 < r_-^1$;
\end{itemize}
In the first and third case define $P_-^{''}=P_-^{''0}$, in the second and fourth case put $P_-^{''}=P_-^{''1}$. Note that the set
\begin{equation*}
 U_-^\prime\doteqdot\left( \bigcup_{P''\in B_0, P''\vartriangleleft P_-^{''0}}  \pi^{\mathfrak{0}}(\quot^{\mathfrak{0}}_{X/S}(\ecal,\varphi,P'')) \right) \cup  \left( \bigcup_{P''\in B_1, P''\vartriangleleft P_-^{''1}}  \pi(\quot_{X/S}(\ecal,P'')) \right)
\end{equation*}
is a proper closed subscheme of $S$. In fact it is proper and closed because it is a finite union of closed proper subschemes of $S$. Call $U_-$ its complement in $S$.\\

Suppose that $P_-^{''}\in\check{B}_0$. By definition the projective morphism
\begin{equation*}
 \pi^{\mathfrak{0}}(\quot^{\mathfrak{0}}_{X/S}(\ecal,\varphi,P_-''))\to S
\end{equation*}
is surjective and for any point $s\in S$ the fibre of $\pi^{\mathfrak{0}}$ at $s$ parametrizes possible quotients with Hilbert polynomial $P_-^{''}$. The associated subsheaf of any such quotient is, by costruction, the maximal decorated destabilizing subsheaf. The case that $P_-^{''}\in\check{B}_1$ is similar. Finally by re-adapting the techniques used in the proof of the corresponding result in \cite{Frank-rest}, one concludes.
\end{proof}
\subsection{Restriction theorem}\label{sec-restriction-theorem}
Let $X$ be a smooth projective variety and $\amplebu{1}$ be a fixed ample line bundle. Let $(\ecal,\varphi)$ be a decorated sheaf of type $(a,b,\linebu)$ over $X$ with non-zero decoration morphism.
For a fixed positive integer $\ai\in\nn^+$, we define:
\begin{itemize}
 \item[-] $\Pi_\ai\doteqdot|\amplebu{\ai}|$ the complete linear system of degree $\ai$ in $X$;
 \item[-] $Z_\ai\doteqdot\{(D,x)\in\Pi_\ai\times X \, | \, x\in D \}$ the incidence variety with projections
\begin{equation*}
\xymatrix{\Pi_\ai\times X & Z_\ai \ar@{_{(}->}[l]\ar[r]^{q_\ai} \ar[d]_{p_\ai} & X\\
& \Pi_\ai
}
\end{equation*}
\end{itemize}
One can prove (see Section $2$ of \cite{Mehta-Ram}) that:
\begin{equation}\label{eq-pic}
 \pic(Z_\ai)=q_\ai^*\pic(X)\oplus p_\ai^*\pic(\Pi_\ai).
\end{equation}

For any sheaf $\gcal$ over $X$ one has $\rest{P_{\gcal}}{D}(n)=P_{\gcal}(n)-P_{\gcal}(n-\ai)$, therefore, given a decorated sheaf $(\ecal,\varphi)$ over $X$ with decoration of type $\tipo=(a,b,\linebu)$, for all $D\in\Pi_\ai$ the restrictions $\rest{\ecal}{D}$ and $\rest{\linebu}{D}$ have constant Hilbert polynomials. Since $\Pi_\ai$ is reduced, as remarked at the beginning of Section \ref{sec-fam-of-decorated-sheaves}, it follows that $q_{\ai}^*\linebu$ and $q_{\ai}^*\ecal$ are flat families of sheaves on the fibre of $p_\ai:Z_\ai\to\Pi_\ai$.\\

\begin{remark}
 If for any $D\in\Pi_\ai$ ${\rest{\varphi_\ai}{\rest{(q_\ai^*\ecal)}{p_\ai^{-1}(D)}}}=\rest{\varphi}{\left(\rest{\ecal}{D}\right)}\neq0$, the family of decorated sheaves $(q_{\ai}^*\ecal,q_{\ai}^*\varphi)$ is flat. Otherwise, since to be nonzero is open condition, there exists a dense open subset of $\Pi_\ai$ over which $(q_{\ai}^*\ecal,q_{\ai}^*\varphi)$ is flat.
\end{remark}

Thanks to this remark and Theorem \ref{teo-rel-max-dest}, there exist a dense open subset $\vai$ of $\Pi_\ai$ and a torsion-free sheaf $\qcal_\ai$ over $Z_\vai\doteqdot Z_\ai\times_{\Pi_\ai}\vai$ such that:
\begin{itemize}
 \item $(\ecal_\ai,\varphi_\ai)\doteqdot(q_{\ai}^*\ecal,q_{\ai}^*\varphi)$ is flat over $\vai$;
 \item $\qcal_\ai$ is flat over $\vai$;
 \item $\fcal_\ai\doteqdot\ker(\ecal_\ai\to\qcal_\ai)$, with the induced morphism $\rest{\varphi_\ai}{\fcal_\ai}$, is the relative maximal decorated destabilizing subsheaf of $(\ecal_\ai,\varphi_\ai)$; i.e., for any $D\in\vai$ $\rest{\fcal_\ai}{p_\ai^{-1}(D)}$ (with the induced morphism) de-semistabilize $\rest{(\ecal_\ai,\varphi_\ai)}{p_\ai^{-1}(D)}$.
 \end{itemize}
 Recall that:
 \begin{itemize}
 \item by construction of the relative maximal decorated destabilizing subsheaf, the quantity
\begin{equation*}
 \eps\left( \rest{\fcal_\ai}{p_\ai^{-1}(D)},\rest{\varphi_\ai}{\left(\rest{\fcal_\ai}{p_\ai^{-1}(D)}\right)} \right)
\end{equation*}
depends only on $\ai$ and not on $D\in\vai$ and for this reason from now on we will denote by $\eps(\ai)$;
 \item $(\ecal_\ai,\varphi_\ai)$, $(\fcal_\ai,\rest{\varphi_\ai}{\fcal_\ai})$ and $\qcal_\ai$ are flat families of decorated sheaves (resp. sheaves) over $\vai$.
\end{itemize}

Let $\deter$ be a line bundle which extends $\det(\qcal_\ai)$ to all $Z_\ai$; in view of \eqref{eq-pic} the line bundle $\deter$ can be uniquely decomposed as $\deter=q_\ai^*L_\ai\otimes p_\ai^* M_\ai=L_\ai\boxtimes M_\ai$ with $L_\ai\in\pic(X)$ and $M_\ai\in\pic(\Pi_\ai)$. Note that $\deg(\rest{\qcal_\ai}{p_\ai^{-1}(D)})=\ai\deg(L_\ai)$.\\

For a general divisor $D\in\Pi_\ai=|\amplebu{\ai}|$, let $\deg(\ai)$, $\rk{\ai}$ and $\mu(\ai)$ denote the degree, rank and slope of the maximal decorated destabilizing subsheaf $\rest{(\fcal_\ai,\rest{\varphi_\ai}{\fcal_\ai})}{p_\ai^{-1}D}$ of $(\rest{\ecal_\ai}{p_\ai^{-1}D},\rest{\varphi_\ai}{p_\ai^{-1}D})$. Let $\dmu(\ai)=\mu(\ai)-\frac{a\deltabar\eps(\ai)}{\rk{\ai}}$, $\degq(\ai)=\deg(\rest{\ecal_\ai}{p_\ai^{-1}D})-\deg(\ai)$, $\rkq{\ai}=\rk{\rest{\ecal_\ai}{p_\ai^{-1}D}}-\rk{\ai}$ and $\eps^q(\ai)=\epss{\rest{\ecal_\ai}{p_\ai^{-1}D}}-\eps(\ai)$. Finally $\muq(\ai)=\frac{\degq(\ai)}{\rkq{\ai}}$, $\ddegq(\ai)=\deg(\ai)-a\deltabar\eps^q(\ai)$ and $\dmuq(\ai)=\muq(\ai)-a\delta\frac{\eps^q(\ai)}{\rkq{\ai}}=\frac{\ddegq(\ai)}{\rkq{\ai}}$.\\

Let $U_\ai\subset\vai$ denote the dense open set of points $D\in V_\ai$ such that $D$ is smooth.\\

\begin{lemma}[Lemma 7.2.3 in \cite{HLbook}]\label{lem-esistono-D_i}
 Let $\ai_1,\dots,\ai_l$ be positive integers, $\ai=\sum_i\ai_i$ and $D_i\in U_{\ai_i}$ divisors such that $D=\sum_i D_{\ai_i}$ is a divisor with normal crossing. Then there is a smooth locally closed curve $C\subset\Pi_\ai$ containing the point $D$ such that $C\smallsetminus\{D\}\subset U_\ai$ and $Z_C\doteqdot C\times_{\Pi_\ai}Z_\ai$ is smooth in codimension $2$. 
\end{lemma}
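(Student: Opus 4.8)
The plan is to realize $C$ as a locally closed piece of a pencil in $\Pi_\ai=|\amplebu{\ai}|$ through the point $D$, and to establish the three required properties separately: smoothness of $C$ is automatic because a pencil is a line, the condition $C\smallsetminus\{D\}\subset U_\ai$ reduces to a soft finiteness statement, and the smoothness in codimension $2$ of $Z_C$ follows from an explicit local computation in which the normal crossing hypothesis enters decisively. Throughout I work over the algebraically closed characteristic $0$ field $k$ (so regular $=$ smooth), and I may assume $\amplebu{\ai}$ very ample, as in the Mehta--Ramanathan setting of \cite{Mehta-Ram}.

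First I would set up the pencil. Choose $s_i\in H^0(X,\amplebu{\ai_i})$ with divisor $D_i$, so that $s\doteqdot\prod_i s_i\in H^0(X,\amplebu{\ai})$ has divisor $D$. Since $U_\ai\subset\vai$ is dense open in $\Pi_\ai$, I would pick a general $t\in H^0(X,\amplebu{\ai})$ with $[t]\in U_\ai$, with $D_t\doteqdot\mathrm{div}(t)$ smooth, and with $D_t$ meeting every stratum of $D$ transversally; all of these are dense open conditions on $t$ by Bertini. Let $\tilde C\subset\Pi_\ai$ be the line spanned by $[s]=D$ and $[t]=D_t$, i.e. the pencil $\{[\lambda s+\mu t]\}_{[\lambda:\mu]\in\pp^1}$. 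To dispose of the condition $C\smallsetminus\{D\}\subset U_\ai$, write $W\doteqdot\Pi_\ai\smallsetminus U_\ai$, a proper closed subset. Since $D$ is singular one has $D\in W$, whereas $[t]\in\tilde C$ lies in $U_\ai$; hence $\tilde C\not\subset W$ and $\tilde C\cap W$ is a finite set containing $D$. Setting $C\doteqdot\tilde C\smallsetminus\bigl((\tilde C\cap W)\smallsetminus\{D\}\bigr)$ gives a smooth locally closed curve with $D\in C$ and $C\smallsetminus\{D\}\subset U_\ai$. As $Z_C$ is an open subscheme of $Z_{\tilde C}$ of the same dimension $n=\dim X$, it suffices to show that $Z_{\tilde C}$ is smooth in codimension $2$.

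The heart of the argument is this last point. The variety $Z_{\tilde C}\subset\pp^1\times X$ is cut out by the single section $F=\lambda s+\mu t$ of $\ocal_{\pp^1}(1)\boxtimes\amplebu{\ai}$ and has dimension $n$. Working in the chart $\lambda\neq0$ with $\nu=\mu/\lambda$ and $F=s+\nu t$, a direct computation of $dF$ shows that a point $(\nu,x)\in Z_{\tilde C}$ is singular if and only if $x$ lies in the base locus $B\doteqdot D\cap D_t$ and $ds_x+\nu\,dt_x=0$ in $T^*_xX$ (over $x\notin B$ the total space is the smooth graph of the pencil). I would then stratify $B$. On $B\cap(D\smallsetminus\mathrm{Sing}(D))$ one has $ds_x\neq0$, and transversality of $D_t$ makes $ds_x,dt_x$ independent, so $ds_x+\nu\,dt_x=0$ has no solution and there are no singular points. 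On $B\cap\mathrm{Sing}(D)$ the normal crossing form of $D$ gives $ds_x=0$, so the equation forces $\nu=0$ (since $dt_x\neq0$ there for general $t$), contributing singular points of dimension $\dim\bigl(\mathrm{Sing}(D)\cap D_t\bigr)$. The normal crossing hypothesis guarantees $\dim\mathrm{Sing}(D)=n-2$, and a general $D_t$ cuts this down to $n-3$. Thus the singular locus of $Z_{\tilde C}$ has dimension $\leq n-3$, i.e. codimension $\geq3$, which is exactly smoothness in codimension $2$; the estimate is inherited by the open subscheme $Z_C$.

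The main obstacle is to control simultaneously the two sources of singularities of the total space: the degeneration along the base locus of the pencil and the singular (non-reduced) special fibre over $D$. The local computation shows that these interact only along $\mathrm{Sing}(D)\cap D_t$, and it is precisely the normal crossing assumption, together with Bertini general position for $t$, that confines this interaction to codimension $\geq3$. The remaining verifications — that the several open conditions defining a general $t$ are compatible, and that passing from $\tilde C$ to the locally closed $C$ preserves both the dimension count and the incidence structure — are routine and may be carried out exactly as in the corresponding argument of \cite{HLbook}.
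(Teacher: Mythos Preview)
The paper does not provide its own proof of this lemma: it is stated with a direct citation to \cite{HLbook}, Lemma 7.2.3, and no argument is given. Your proposal is correct and is precisely the standard pencil argument carried out in \cite{HLbook}, so there is nothing substantively different to compare; one small inaccuracy is the claim that $D\in W=\Pi_\ai\smallsetminus U_\ai$, which fails when $l=1$ (then $D=D_1\in U_\ai$), but your definition $C=\tilde C\smallsetminus\bigl((\tilde C\cap W)\smallsetminus\{D\}\bigr)$ works regardless, so the argument goes through unchanged.
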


\begin{lemma}
 Let $\ai_1,\dots,\ai_l$ be positive integers and $\ai=\sum_i\ai_i$. Then
 \begin{itemize}
  \item $\mu(\ai)\leq\sum_i\mu(\ai_i)$,
  \item $\muq(\ai)\geq\sum_i\muq(\ai_i)$,
  \item $\dmuq(\ai)\geq\sum_i\dmuq(\ai_i)$
 \end{itemize}
 and in case of equality $\rkq{\ai}\leq\min_i\rkq{\ai_i}$, or equivalently $\rk{\ai}\geq\max_i\rk{\ai_i}$.
\end{lemma}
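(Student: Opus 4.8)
The plan is to adapt the degeneration argument of Mehta--Ramanathan (the proof of Lemma~7.2.3/7.2.2 in \cite{HLbook}) to the decorated setting: I push the relative maximal destabilizing data from a smooth divisor of degree $\ai$ onto the reducible divisor $\sum_i D_i$ and read it off on each component. First I would choose, for each $i$, a divisor $D_i\in U_{\ai_i}$ generic enough that the restriction of $(\fcal_{\ai_i},\rest{\varphi_{\ai_i}}{\fcal_{\ai_i}})$ is the maximal decorated destabilizing subsheaf of $\rest{(\ecal,\varphi)}{D_i}$, and such that $D=\sum_i D_i$ is a normal crossing divisor. Lemma~\ref{lem-esistono-D_i} then yields a smooth locally closed curve $C\subset\Pi_\ai$ through $D$ with $C\smallsetminus\{D\}\subset U_\ai$ and $Z_C=C\times_{\Pi_\ai}Z_\ai$ smooth in codimension $2$.

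Over $C\smallsetminus\{D\}$ the relative maximal destabilizing subsheaf $\fcal_\ai$ and its flat quotient $\qcal_\ai$ constructed before the lemma (Theorem~\ref{teo-rel-max-dest}) are defined. Using the properness of the relative Quot scheme $\quot_{Z_C/C}(\ecal,\cdot)$ over the smooth curve $C$ (Theorem~\ref{teo-quot-scheme} together with the valuative criterion), I would extend $\qcal_\ai$ to a $C$-flat quotient $\bar\qcal$ of $\ecal_{Z_C}$ with $C$-flat kernel $\bar\fcal$. On the generic fibre these restrict to the destabilizing pair $(\fcal_\ai,\qcal_\ai)$, while on the special fibre over $D$ they restrict, on each component $D_i$, to a subsheaf $\bar\fcal_i\doteqdot\rest{\bar\fcal}{D_i}\subseteq\rest{\ecal}{D_i}$ and a quotient $\bar\qcal_i$, of constant relative rank $\rk{\ai}$, resp.\ $\rkq{\ai}$.

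The heart of the argument is then numerical. Flatness over $C$ keeps the relative Hilbert polynomial constant, so $\rk{\ai}$ and $\deg(\ai)$ computed on the generic fibre agree with the invariants of $\rest{\bar\fcal}{D}$; the class decomposition $[D]=\sum_i[D_i]$ together with the codimension-$2$ smoothness of $Z_C$ (which kills the contribution of the loci $D_i\cap D_j$) makes the degree additive, giving $\deg(\ai)=\sum_i\deg_{D_i}(\bar\fcal_i)$ and hence $\mu(\ai)=\sum_i\deg_{D_i}(\bar\fcal_i)/\rk{\ai}$, and dually $\muq(\ai)=\sum_i\deg_{D_i}(\bar\qcal_i)/\rkq{\ai}$. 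Because $\bar\fcal_i$ is a subsheaf of $\rest{\ecal}{D_i}$ with maximal destabilizing subsheaf $\fcal_{\ai_i}$, property~(1) of Lemma~\ref{lemma-maximal-dest-subsheaf} bounds each summand by the extremal slope on $D_i$, and summing yields $\mu(\ai)\le\sum_i\mu(\ai_i)$; the dual minimality of $\qcal_{\ai_i}$ among quotients gives $\muq(\ai)\ge\sum_i\muq(\ai_i)$, and carrying the $\eps$-correction through gives $\dmuq(\ai)\ge\sum_i\dmuq(\ai_i)$. For the equality clause I would invoke the uniqueness in property~(2) of Lemma~\ref{lemma-maximal-dest-subsheaf}: equality forces each $\bar\fcal_i$ to attain the extremal slope on $D_i$, hence to be comparable with $\fcal_{\ai_i}$, and comparing ranks produces the relation between $\rkq{\ai}$ and the $\rkq{\ai_i}$.

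The main obstacle will be the decoration bookkeeping. The degree, and therefore the plain-slope identities above, passes additively through the reducible special fibre, but $\eps$ does not, and I must control how $\eps(\ai)$, $\eps(\bar\fcal_i)$ and $\eps(\ai_i)$ are related under the specialization $D'\rightsquigarrow\sum_iD_i$. The points to pin down are that the vanishing of $\rest{\varphi}{\bar\fcal}$ is a \emph{closed} condition on $Z_C$ (so $\eps(\ai)=0$ propagates to every component) while its nonvanishing is only open, and that these constraints are exactly what allow the $\eps$-corrected inequality $\dmuq(\ai)\ge\sum_i\dmuq(\ai_i)$ to survive rather than merely its plain-slope shadow. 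A second delicate point is the precise \emph{direction} of the rank comparison in the equality case: the naive comparison of $\bar\fcal_i$ with $\fcal_{\ai_i}$ as subsheaves and the comparison of $\bar\qcal_i$ with $\qcal_{\ai_i}$ as quotients a priori push in opposite ways, so establishing that equality indeed forces $\rkq{\ai}\le\min_i\rkq{\ai_i}$ (equivalently $\rk{\ai}\ge\max_i\rk{\ai_i}$) is where I would concentrate the careful work, using the uniqueness statement of Lemma~\ref{lemma-maximal-dest-subsheaf} in the form adapted to the $\eps$-slope.
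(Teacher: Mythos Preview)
Your approach is the same as the paper's: degenerate via Lemma~\ref{lem-esistono-D_i}, extend the relative quotient flatly over the curve $C$, restrict to the components $D_i$, and compare with the extremal destabilizers there. Two technical points deserve more care than you give them.

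First, the restrictions $\rest{\bar\qcal}{D}$ and $\rest{\bar\qcal}{D_i}$ may acquire torsion, so your $\bar\fcal_i$ need not be saturated subsheaves of $\rest{\ecal}{D_i}$. The paper divides out torsion at each step, setting $\overline{\qcal}_D\doteqdot\rest{\qcal_C}{D}/T(\rest{\qcal_C}{D})$ and $\qcal_i\doteqdot\rest{\overline{\qcal}_D}{D_i}/T(\cdot)$; this is what makes the slope inequalities go the right way and, crucially, what settles your $\eps$-bookkeeping worry: since torsion lies in $\ker\varphi$ (Remark~\ref{rem-semistabilita}), passing to the saturation leaves $\eps$ unchanged, and one then checks directly that $\sum_i\epss{\fcal_i}\geq\eps(\ai)\geq\epss{\fcal_i}$ for each $i$.

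Second, your claim that codimension-$2$ smoothness of $Z_C$ ``kills the contribution of the loci $D_i\cap D_j$'' and makes the degree additive is not quite right. For an arbitrary sheaf on $D$ the intersections \emph{do} contribute correction terms to the slope formula (cf.\ \cite{HLbook} Lemma~7.2.5), involving the possibly jumping ranks $\rk{\rest{\overline{\qcal}_D}{D_i\cap D_j}}$; one only gets an \emph{inequality} $\mu(\overline{\qcal}_D)\geq\sum_i\mu(\qcal_i)$, not an equality, and this is precisely what is needed. Once these two points are handled, the equality clause follows as you indicate, by the uniqueness property of the maximal destabilizer on each $D_i$.
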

\begin{proof}
 Let $D_i\in U_{\ai_i}$, for $i=1,\dots,l$, be divisors satisfying the requirements of Lemma \ref{lem-esistono-D_i}, be $D\doteqdot\sum_i D_i$ and let $C$ be a curve with the properties of Lemma \ref{lem-esistono-D_i}. There exists over $\vai$ a maximal decorated destabilizing subsheaf $\fcal_\ai$ with the associated torsion free quotient $\rest{\ecal_\ai}{Z_\vai}\to\qcal_\ai$. Recall that both sheaves are flat over $\vai$. Its restriction to $\vai\cap C$ can uniquely be extended to a $C$ flat quotient $\rest{\ecal_\ai}{Z_C}\to\qcal_C$ and let $\fcal_C=\ker(\rest{\ecal_\ai}{Z_C}\to\qcal_C)$, then also $\fcal_C$ extends $\rest{\fcal_\ai}{\vai\cap C}$ to all $C$. Note that also $\fcal_C$ is flat over $C$ and so $\hilpol{\rest{\fcal_C}{D}}=\hilpol{\rest{\fcal_C}{c}}$ for any $c\in C$. Therefore $\mu(\rest{\fcal_C}{D})=\mu(\ai)$, $\rk{\rest{\fcal_C}{D}}=\rk{\ai}$ and $\eps(\rest{\fcal_C}{D})=\eps(\ai)$.
 Let
\begin{itemize}
 \item $\overline{\qcal}_D\doteqdot\rest{\qcal_C}{D}/T(\rest{\qcal_C}{D})$ and $\overline{\fcal}_D\doteqdot\ker(\rest{\ecal_\ai}{D}\to\overline{\qcal}_D)$, i.e., they fit in the exact sequence
\begin{equation*}
 0\longrightarrow\overline{\fcal}_D\longrightarrow\rest{\ecal_\ai}{D}\longrightarrow\frac{\rest{\qcal_C}{D}}{T(\rest{\qcal_C}{D})}\longrightarrow0;
\end{equation*}
 \item $\qcal_i\doteqdot\rest{\overline{\qcal}_D}{D_i}/T(\rest{\overline{\qcal}_D}{D_i})$ and $\fcal_i\doteqdot\ker(\rest{(\rest{\ecal_\ai}{D})}{D_i}\to\qcal_i)$, i.e., they fit in the exact sequence
\begin{equation*}
 0\longrightarrow\fcal_i\longrightarrow\rest{\ecal_\ai}{D_i}\longrightarrow\frac{\rest{\overline{\qcal}_D}{D_i}}{T(\rest{\overline{\qcal}_D}{D_i})}\longrightarrow0;
\end{equation*}
\end{itemize}
 
Then one gets
 \begin{itemize}
  \item $\rk{\ai}=\rk{\rest{\fcal_C}{D}}=\rk{\overline{\fcal}_D}=\rk{\rest{\overline{\fcal}_D}{D_i}}=\rk{\fcal_i}$ and $\rkq{\ai}=\rk{\rest{\qcal_C}{D}}=\rk{\overline{\qcal}_D}=\rk{\rest{\overline{\qcal}_D}{D_i}}=\rk{\qcal_i}$;
  \item $\muq(\ai)=\mu(\rest{\qcal_C}{D})\geq\mu(\overline{\qcal}_D)$ and $\mu(\ai)=\mu(\rest{\fcal_C}{D})\leq\mu(\overline{\fcal}_D)$;
  \item $\mu(\rest{\overline{\qcal}_D}{D_i})\geq\mu(\qcal_i)$ and $\mu(\rest{\overline{\fcal}_D}{D_i})\leq\mu(\fcal_i)$.
 \end{itemize}

Since $\rest{\ecal}{D}$ and $\overline{\qcal}_D$ are pure, and the sequences
  \begin{align*}
  &  0\longrightarrow\overline{\qcal}_D\longrightarrow\bigoplus_{i}\rest{(\overline{\qcal}_D)}{D_i}\longrightarrow\bigoplus_{i < j}\rest{(\overline{\qcal}_D)}{D_i\cap D_j}\longrightarrow0\\
  & 0\longrightarrow\rest{\ecal}{D}\longrightarrow\bigoplus_{i}\rest{(\rest{\ecal}{D})}{D_i}\longrightarrow\bigoplus_{i < j}\rest{(\rest{\ecal}{D})}{D_i\cap D_j}\longrightarrow0
  \end{align*}
  are exact modulo sheaves of dimension $n-3$, following the same calculations of Lemma $7.2.5$ in \cite{HLbook}, one gets that
\begin{align*}
 & \mu(\overline{\qcal}_D)=\sum_i\left( \mu(\rest{(\overline{\qcal}_D)}{D_i})-\frac{1}{2}\sum_{j\neq i}\left( \frac{\rk{\rest{(\overline{\qcal}_D)}{D_i\cap D_j}}}{\rkq{\ai}} - 1 \right)\ai_i\ai_j \right)\\
& \mu(\rest{\ecal}{D})=\sum_i\left( \mu(\rest{(\rest{\ecal}{D})}{D_i})-\frac{1}{2}\sum_{j\neq i}\left( \frac{\rk{\rest{(\rest{\ecal}{D})}{D_i\cap D_j}}}{\rk{\rest{\ecal}{D}}} - 1 \right)\ai_i\ai_j \right).
\end{align*}
and
\begin{equation*}
 \mu(\qcal_i)\leq\mu(\rest{(\overline{\qcal}_D)}{D_i})-\frac{1}{2}\sum_{j\neq i}\left( \frac{\rk{\rest{(\overline{\qcal}_D)}{D_i\cap D_j}}}{\rk{\overline{\qcal}_D}} - 1 \right)\ai_i\ai_j\\
\end{equation*}
Therefore $\mu(\overline{\qcal}_D)\geq\sum_i \mu(\qcal_i)$, $\deg(\rest{\ecal}{D})\leq\sum_i \deg(\rest{(\rest{\ecal}{D})}{D_i})$ and so easy calculations show that $\mu(\overline{\fcal}_D)\leq\sum_i \mu(\fcal_i)$.\\
 
Note that, since $T_F\doteqdot\overline{\fcal}_D/\rest{(\fcal_C)}{D}$ is pure torsion, $\rest{\varphi}{T_F}=0$ (see Remark \ref{rem-semistabilita}) and so $\eps(\rest{\varphi}{\rest{(\fcal_C)}{D}})=\eps(\rest{\varphi}{\overline{\fcal}_D})$. For the same reason $\eps(\rest{\varphi}{\rest{(\overline{\fcal}_D)}{D_i}})=\eps(\rest{\varphi}{\fcal_i})$. Moreover, if $\eps(\ai)=\eps(\rest{\fcal_C}{D})=0$ then obviously also $\epss{\fcal_i}=0$ for all $i$; conversely if $\eps(\ai)=1$ then there exists at least one $i$ such that $\epss{\fcal_i}=1$. Therefore $\sum_i\epss{\fcal_i}\geq\eps(\ai)\geq\epss{\fcal_i}$.\\ 

Therefore, defining $\epss{\qcal_i}=(1-\epss{\fcal_i})$ and $\epss{\overline{\qcal}_D}=(1-\epss{\overline{\fcal}_D})$ as in Remark \ref{rem-def-ddeg-al-quoziente}, thanks to the previous inequalities and considerations, one gets $\sum_i\epss{\qcal_i}\geq\epss{\overline{\qcal}_D}\geq\epss{\qcal_i}$ and so
\begin{equation*}
 \dmuq(\ai)\geq\dmu(\overline{\qcal}_D)\geq\sum_i\dmu(\qcal_i)\geq\sum_i\dmuq(\ai_i).
\end{equation*}

If $\dmuq(\ai)=\sum_i\dmuq(\ai_i)$ it follows that $\dmuq(\qcal_i)=\dmuq(\ai_i)$. Since $\dmuq(\ai)$ is the decorated slope of the minimal destabilizing quotient (i.e., its kernel is the maximal decorated destabilizing subsheaf), we have $\rkq{\ai}=\rk{\qcal_i}\geq\rkq{\ai_i}$ for all $i$.
\end{proof}

\begin{corollary}\label{cor-constant-slope}
$\rkq{\ai}$, $\frac{\eps^q(\ai)}{\ai}$, $\frac{\muq(\ai)}{\ai}$, $\frac{\dmuq(\ai)}{\ai}$, $\frac{\mu(\ai)}{\ai}$, $\dmu(\ai)$, $\rk{\ai}$ and $\eps(\ai)$ are constant for $\ai\gg0$.
\end{corollary}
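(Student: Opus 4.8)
The plan is to deduce the statement from the previous lemma by the same mechanism that produces the analogous result for ordinary torsion-free sheaves in \cite[\S 7.2]{HLbook} and in \cite{Frank-rest}; the only genuinely new feature is the bookkeeping of the decoration invariants $\eps(\ai)$ and $\eps^q(\ai)$, which the previous lemma has already absorbed into its estimates for $\muq$ and $\dmuq$. First I would read the three inequalities of that lemma as (super/sub)additivity with respect to arbitrary decompositions $\ai=\sum_i\ai_i$: the assignment $\ai\mapsto\mu(\ai)$ is subadditive, while $\ai\mapsto\muq(\ai)$ and $\ai\mapsto\dmuq(\ai)$ are superadditive; restricting to two-term decompositions, each satisfies the hypotheses of Fekete's lemma. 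Since the degree on a divisor $D\in|\amplebu{\ai}|$ of any sheaf obtained by restriction grows linearly in $\ai$, while $\rk{\ai},\rkq{\ai}\in\{1,\dots,\rk{\ecal}-1\}$ and $\eps(\ai),\eps^q(\ai)\in\{0,1\}$ are bounded, the normalized quantities $\frac{\mu(\ai)}{\ai}$, $\frac{\muq(\ai)}{\ai}$ and $\frac{\dmuq(\ai)}{\ai}$ are bounded. Fekete's lemma then gives that $\frac{\mu(\ai)}{\ai}$ converges to $\inf_\ai\frac{\mu(\ai)}{\ai}$ and that $\frac{\muq(\ai)}{\ai}$, $\frac{\dmuq(\ai)}{\ai}$ converge to the respective suprema.

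The substantive step is to upgrade these convergences to genuine eventual constancy, and here the equality clause of the previous lemma is indispensable, since sub/superadditivity alone only forces a limit, which could in principle be approached without ever being attained. The lever is the boundedness of $\rk{\ai}$: it attains a maximal value $\rho$, and whenever a decomposition $\ai=\sum_i\ai_i$ meets the equality case the clause $\rk{\ai}\geq\max_i\rk{\ai_i}$ shows that this maximal rank is absorbing. Tracking this through decompositions into a fixed optimal piece --- which is exactly the combinatorial argument of \cite[\S 7.2]{HLbook} and \cite{Frank-rest} --- forces $\rk{\ai}$ to equal $\rho$, hence to be constant, for all $\ai\gg0$; consequently $\rkq{\ai}=\rk{\ecal}-\rk{\ai}$ is constant as well. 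The decoration invariants stabilize along with the rank: the inequalities $\sum_i\epss{\fcal_i}\geq\eps(\ai)\geq\epss{\fcal_i}$ recorded in the proof of the previous lemma, combined with the rank stabilization, pin $\eps(\ai)$, and therefore $\eps^q(\ai)$, to a constant value.

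Once $\rk{\ai}$, $\rkq{\ai}$, $\eps(\ai)$ and $\eps^q(\ai)$ are constant for $\ai\gg0$, the equality case forces the decorated degree of the maximal destabilizing subsheaf to be \emph{exactly} linear in $\ai$, so the convergent normalized slopes $\frac{\mu(\ai)}{\ai}$, $\frac{\muq(\ai)}{\ai}$, $\frac{\dmuq(\ai)}{\ai}$ become literally constant rather than merely convergent. Every remaining entry of the list is then a $\qq$-linear combination of these normalized slopes with the now-constant integer invariants $\rk{\ai}$, $\rkq{\ai}$, $\eps(\ai)$, $\eps^q(\ai)$, and is therefore constant too.

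The hard part is precisely this passage from Fekete-convergence to exact eventual constancy: it cannot be obtained from the additivity estimates alone, and must exploit the discreteness of the admissible slopes --- the quotient $\dmuq(\ai)=\ddegq(\ai)/\rkq{\ai}$ has denominator bounded uniformly (the rank is bounded and $\deltabar\in\qq$), so only finitely many slope values of each type occur per unit of $\ai$ --- together with the rank monotonicity supplied by the equality clause. I would carry out that reduction faithfully, first obtaining stabilization along arithmetic subsequences and then extending it to all $\ai\gg0$ exactly as in \cite[\S 7.2]{HLbook} and \cite{Frank-rest}, with the decoration terms tracked as in the previous lemma.
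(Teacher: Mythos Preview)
Your proposal is correct and follows essentially the same route as the paper: both feed the superadditivity and rank-monotonicity equality clause of the previous lemma into the combinatorial argument of \cite[Corollary~7.2.6]{HLbook} to obtain eventual constancy. The only difference is the order of deduction: the paper runs that argument twice, once for $\frac{\muq(\ai)}{\ai}$ and once for $\frac{\dmuq(\ai)}{\ai}$, and then reads off $\frac{\eps^q(\ai)}{\ai}$ constant from the algebraic identity $\dmuq(\ai)=\muq(\ai)-a\deltabar\,\eps^q(\ai)/\rkq{\ai}$, whereas you attempt to pin down $\eps(\ai)$ directly from the $\eps$-inequalities inside the previous lemma's proof --- the paper's order is marginally cleaner, but the substance is the same.
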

\begin{proof}
 The quantities $\rkq{\ai}$ and $\frac{\muq(\ai)}{\ai}$ are constant as proved in \cite{HLbook} Corollary $7.2.6$. The same arguments show that $\frac{\dmuq(\ai)}{\ai}$ is constant as well. Therefore $\frac{\eps^q(\ai)}{\ai}$ has to be constant too and easy calculations show that also $\eps(\ai)$, $\frac{\mu(\ai)}{\ai}$ and $\dmu(\ai)$ are constant.
\end{proof}

\begin{corollary}
 For $\ai>>0$ or $\eps^q(\ai)=0$ and $\eps(\ai)=1$ or $\eps(\ecal_\ai,\varphi_\ai)=0$.
\end{corollary}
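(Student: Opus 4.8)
The plan is to extract both equalities directly from Corollary~\ref{cor-constant-slope}, using nothing more than the fact that $\eps^q(\ai)$ is a uniformly bounded integer. First I would recall that, by definition, $\eps^q(\ai)=\epss{\rest{\ecal_\ai}{p_\ai^{-1}(D)}}-\eps(\ai)$, and that each of the two summands takes values in $\{0,1\}$; hence $|\eps^q(\ai)|\leq1$ for every $\ai$.

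Next I would invoke Corollary~\ref{cor-constant-slope}, which guarantees that $\frac{\eps^q(\ai)}{\ai}$ equals a fixed rational constant $c$ for all $\ai\geq\ai_0$, so that $\eps^q(\ai)=c\,\ai$ on this range. Since the left-hand side stays bounded while $\ai\to\infty$, the constant $c$ must vanish; therefore $\eps^q(\ai)=0$, equivalently $\epss{\rest{\ecal_\ai}{p_\ai^{-1}(D)}}=\eps(\ai)$, for all $\ai\gg0$. This single identity already carries all the content of the statement, and it is the place where the whole mechanism sits: a decoration-value that scales linearly in $\ai$ yet can only be $0$ or $1$ is forced to stabilize.

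It then remains to split on the common value of the two epsilons. If the restricted decoration survives — which, by the openness of the non-vanishing condition noted above, is the generic behaviour on $\vai$ whenever $\varphi\neq0$ — then $\epss{\rest{\ecal_\ai}{p_\ai^{-1}(D)}}=1$ for general $D\in\vai$, and the identity forces $\eps(\ai)=1$; together with $\eps^q(\ai)=0$ this is the first alternative. In the complementary degenerate case the restricted decoration is identically zero, which is exactly $\eps(\ecal_\ai,\varphi_\ai)=0$, the second alternative.

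I do not expect a genuine obstacle here, since all the hard input — the constancy of $\frac{\eps^q(\ai)}{\ai}$, of $\rk{\ai}$, of $\dmu(\ai)$, and of the remaining quantities — is already supplied by the preceding lemma and by Corollary~\ref{cor-constant-slope}. The only point deserving attention is bookkeeping: ensuring that the dichotomy between ``the destabilizing subsheaf absorbs the decoration'' and ``the decoration dies on restriction'' is exhaustive, and that the constant furnished by Corollary~\ref{cor-constant-slope} is genuinely the one appearing in $\eps^q(\ai)=c\,\ai$.
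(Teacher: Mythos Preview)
Your proposal is correct and follows essentially the same route as the paper: use Corollary~\ref{cor-constant-slope} to see that $\frac{\eps^q(\ai)}{\ai}$ is eventually constant, combine this with the uniform bound $|\eps^q(\ai)|\leq1$ to force $\eps^q(\ai)=0$, and then read off the dichotomy from $\eps^q(\ai)=\eps(\ecal_\ai,\varphi_\ai)-\eps(\ai)$. The paper's own proof is the same two-line argument, just stated more tersely.
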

\begin{proof}
 Since $\frac{\eps^q(\ai)}{\ai}$ is definitively constant, $\eps^q(\ai)=0$ for $\ai>>0$. Since $\eps^q(\ai)=\eps(\ecal_\ai,\varphi_\ai)-\eps(\ai)$ or they are (definitively) both zero or both one.
\end{proof}

\begin{lemma}[Lemma 7.2.7 \cite{HLbook}]\label{lem-constant-lin-bun}
 There exist $\ai_0\in\nn$ and a line bundle $L\in\mbox{Pic}(X)$ such that $L_\ai\simeq L$ for any $\ai>\ai_0$.
\end{lemma}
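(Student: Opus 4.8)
The plan is to translate the statement about the line bundle $L_\ai$ into one about the determinant of the maximal destabilizing subsheaf, and then to exploit the compatibility of these subsheaves under restriction to reducible divisors. Set $N_\ai\doteqdot\det\ecal\otimes L_\ai^{-1}\in\pic(X)$. Since $\det(\ecal|_D)=\det(\fcal_\ai|_D)\otimes\det(\qcal_\ai|_D)$ and $\det(\qcal_\ai|_D)=L_\ai|_D$ for a general $D\in\Pi_\ai$ (the $M_\ai$ part being trivial on a fibre of $p_\ai$), one has $\det(\fcal_\ai|_D)\cong N_\ai|_D$; thus showing $L_\ai$ constant is equivalent to showing $N_\ai$ constant. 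First I would record that the \emph{degree} is already under control: by Corollary \ref{cor-constant-slope} the quantities $\rkq{\ai}$ and $\muq(\ai)/\ai$ are constant for $\ai\gg0$, so $\deg(L_\ai)=\degq(\ai)/\ai=\rkq{\ai}\,\muq(\ai)/\ai$ is constant, and hence so is $\deg(N_\ai)$. The real content is therefore to pin down the isomorphism class of $N_\ai$, not merely its degree.

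Next I would set up the comparison between different values of $\ai$. Fix $\ai_1,\ai_2\gg0$ and put $\ai=\ai_1+\ai_2$. Because $\dmuq(\ai)/\ai$ is constant (Corollary \ref{cor-constant-slope}), we get $\dmuq(\ai)=\dmuq(\ai_1)+\dmuq(\ai_2)$, i.e. we are in the equality case of the slope-additivity lemma established above. Choosing $D_i\in U_{\ai_i}$ so that $D=D_1+D_2$ has normal crossings (Lemma \ref{lem-esistono-D_i}), the equality case together with the uniqueness of the maximal destabilizing subsheaf (Propositions \ref{prop-maximal-dest-subsheaf} and \ref{rem-maximal-dest-sheaf-for-coherent}) shows that the restriction of $\overline{\fcal}_D$ to each component $D_i$ coincides, up to torsion supported on $D_1\cap D_2$, with the maximal destabilizing subsheaf $\fcal_{\ai_i}|_{D_i}$ of $\ecal|_{D_i}$.

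I would then compare determinants. The torsion by which $\overline{\fcal}_D|_{D_i}$ and $\fcal_{\ai_i}|_{D_i}$ differ is supported on $D_1\cap D_2$, which as a divisor in $D_i$ is linearly equivalent to $\ai_j\amplebu{1}|_{D_i}$; hence its determinant is a power $\amplebu{m}|_{D_i}$. Restricting the identity $\det(\overline{\fcal}_D)\cong N_\ai|_D$ to $D_i$ then yields $N_\ai|_{D_i}\cong N_{\ai_i}|_{D_i}\otimes\amplebu{m}|_{D_i}$ for some integer $m$. Comparing degrees on $D_i$ and using $\deg(N_\ai)=\deg(N_{\ai_i})$ (both constant, from the first paragraph) forces $m\deg(\amplebu{1})=0$, whence $m=0$, so that $N_\ai|_{D_i}\cong N_{\ai_i}|_{D_i}$ already as line bundles on $D_i$.

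Finally, since $D_i$ is a general, sufficiently positive ample divisor, the restriction map $\pic(X)\to\pic(D_i)$ is injective (Grothendieck--Lefschetz), so $N_\ai\cong N_{\ai_i}$ on $X$. As this holds for every decomposition with $\ai_1,\ai_2\gg0$, fixing $\ai_1$ and letting the complementary part vary shows that $N_\ai$, and therefore $L_\ai=\det\ecal\otimes N_\ai^{-1}$, is independent of $\ai$ for $\ai$ large; this common value is the desired line bundle $L$, which gives $\ai_0$. I expect the main obstacle to be the determinant bookkeeping in the third paragraph: one must verify that the torsion correction is genuinely a multiple of $\amplebu{1}|_{D_i}$ (so the degree computation can eliminate it) rather than an arbitrary class, and one must secure the injectivity of $\pic(X)\to\pic(D_i)$, which is where a hypothesis on $\dim X$ (or a separate treatment of the surface case) enters.
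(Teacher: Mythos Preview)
The paper does not supply a proof: the lemma is quoted from \cite{HLbook} (their Lemma 7.2.7) and used as a black box, so there is nothing here to compare against except the Huybrechts--Lehn argument itself. Your outline is that argument, up to the cosmetic choice of tracking $N_\ai=\det\ecal\otimes L_\ai^{-1}$ rather than $L_\ai$ directly.

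Regarding the two obstacles you flag. The determinant bookkeeping simplifies once you exploit the equality case more fully: each inequality in the chain
\[
\muq(\ai)=\mu(\qcal_C|_D)\geq\mu(\overline{\qcal}_D)\geq\sum_i\mu(\qcal_i)\geq\sum_i\muq(\ai_i)
\]
arises from discarding a torsion sheaf, and equality forces each such torsion sheaf to have degree zero. Since $c_1$ of a torsion sheaf is an effective class, and an effective class of degree zero against an ample polarization is zero, the torsion contributes nothing to the determinant; hence $L_\ai|_{D_i}\cong L_{\ai_i}|_{D_i}$ on the nose, with no $\amplebu{m}$ correction term to eliminate. For the injectivity of $\pic(X)\to\pic(D_i)$ you should not invoke Grothendieck--Lefschetz, which in its usual form needs $\dim X\geq3$ and would exclude precisely the surface case treated in Theorem~\ref{teo-rest}; the correct input is Lemma 7.2.2 of \cite{HLbook}, which the paper itself cites in the proof of Theorem~\ref{teo-rest} and which gives injectivity for general $D_i\in|\amplebu{\ai_i}|$ with $\ai_i\gg0$ in any dimension $\geq2$, via Serre vanishing applied to the ideal-sheaf sequence of $D_i$.
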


In this way we have proved that for $\ai>>0$ an extension of $\det(\qcal_\ai)$ is of the form $L\boxtimes M_\ai$ with $L\in\pic(X)$ and $\deg(\rest{\qcal_\ai}{D})=\ai\deg(L)$ for any $D\in\vai$. Now we can state and prove the main theorem of this section:

\begin{theorem}\label{teo-rest}
Let $X$ be a smooth projective surface and $\amplebu{1}$ be a very ample line bundle. Let $(\ecal,\varphi)$ be a slope \fr-semistable decorated sheaf. Then there is an integer $\ai_0$ such that for all $\ai\geq \ai_0$ there is a dense open subset $U_\ai\subset |\amplebu{\ai}|$ such that for all $D\in U_\ai$ the divisor $D$ is smooth and $\rest{(\ecal,\varphi)}{D}$ is slope \fr-semistable.
\end{theorem}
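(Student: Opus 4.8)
The plan is to argue by contradiction, transplanting the Mehta--Ramanathan strategy in the form used by Flenner and by Huybrechts--Lehn (\cite{HLbook}, Theorem 7.2.8; see also \cite{Mehta-Ram}, \cite{Frank-rest}) to the decorated setting, and relying on the relative machinery assembled above. Suppose the conclusion fails; then for arbitrarily large $\ai$ the restriction $\rest{(\ecal,\varphi)}{D}$ is not slope \fr-semistable for the general $D\in|\amplebu{\ai}|$. By Theorem \ref{teo-rel-max-dest} we obtain, over a dense open $\vai\subset|\amplebu{\ai}|$, the relative maximal decorated destabilizing subsheaf $\fcal_\ai\subset\ecal_\ai=q_\ai^*\ecal$ with torsion-free quotient $\qcal_\ai$, restricting on each fibre $p_\ai^{-1}(D)\cong D$ to the destabilizing subsheaf of $\rest{(\ecal_\ai,\varphi_\ai)}{D}$. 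Corollary \ref{cor-constant-slope} makes $\rk{\ai}$, $\eps(\ai)$ and the normalized slopes constant for $\ai\gg0$, and Lemma \ref{lem-constant-lin-bun} fixes the line bundle $L$ in $\det\qcal_\ai=L\boxtimes M_\ai$. The aim is to show that this rigidity forces $\fcal_\ai$ to descend to a subsheaf $\fcal\subset\ecal$ on $X$ that de-semistabilizes $(\ecal,\varphi)$, contradicting the hypothesis.

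The numerical translation is what makes the surface case ($n=2$) immediate. For any coherent $\gcal$ on $X$ and a smooth curve $D\in|\amplebu{\ai}|$ one has $\deg_D(\rest{\gcal}{D})=\ai\deg\gcal$, so degrees and slopes on $D$ are exactly $\ai$ times those on $X$; accordingly the weight used on $D$ is $\ai\deltabar$, in accordance with the restriction rule $\rest{P_\gcal}{D}(n)=P_\gcal(n)-P_\gcal(n-\ai)$ applied to $\delta$. Since $\rest{(\det\qcal_\ai)}{D}=\rest{L}{D}$ we get $\deg_D(\rest{\qcal_\ai}{D})=\ai\deg L$, hence $\deg_D(\rest{\fcal_\ai}{D})=\ai(\deg\ecal-\deg L)$, both linear in $\ai$ and independent of $D$. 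As $\varphi\neq0$ we have $\eps(\ecal_\ai,\varphi_\ai)=1$, so the dichotomy in the corollary following Corollary \ref{cor-constant-slope} yields $\eps^q(\ai)=0$ and $\eps(\ai)=1$ for $\ai\gg0$: the decoration is carried entirely by the destabilizing subsheaf, and by openness of nonvanishing $\eps(\fcal)=\eps(\ai)=1$ for the general $D$. Dividing the destabilizing inequality $\dmu(\rest{\fcal_\ai}{D})>\dmu(\rest{\ecal_\ai}{D})$ by $\ai$ then gives $(\deg\ecal-\deg L)/\rk{\ai}-a\deltabar/\rk{\ai}>\dmu(\ecal)$, which is exactly the assertion $\dmu(\fcal)>\dmu(\ecal)$ for any subsheaf $\fcal\subset\ecal$ of rank $\rk{\ai}$ with $\deg\fcal=\deg\ecal-\deg L$ and $\eps(\fcal)=1$. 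It thus remains to realize such an $\fcal$ inside $\ecal$.

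The crux, and the step I expect to be the main obstacle, is this descent: producing a single saturated $\fcal\subset\ecal$ with $\rest{\fcal}{D}=\rest{\fcal_\ai}{D}$ for the general $D$, equivalently showing $\fcal_\ai=q_\ai^*\fcal$. Here I would combine the universality clause of Theorem \ref{teo-rel-max-dest} with the subadditivity $\dmuq(\ai)\geq\sum_i\dmuq(\ai_i)$ proved above: writing $\ai=\ai_1+\ai_2$ with both summands large and taking $D=D_1+D_2$ of normal-crossing type as in Lemma \ref{lem-esistono-D_i}, constancy of $\dmuq(\ai)/\ai$ forces equality, whence the rank bound $\rk{\ai}\geq\max_i\rk{\ai_i}$ becomes sharp and the intermediate inequalities collapse, so that the destabilizing quotient over $D$ restricts compatibly (modulo torsion) to those over $D_1,D_2$. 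Propagating this compatibility over the covering family of such divisors---whose general members sweep out $X$ because $\amplebu{1}$ is very ample---and using the fixed rank $\rkq{\ai}$ and determinant $L$, one glues the $\rest{\qcal_\ai}{D}$ into a torsion-free quotient $\ecal\twoheadrightarrow\qcal$ on $X$ with $\det\qcal=L$ and $\rk{\qcal}=\rkq{\ai}$; its kernel is the desired $\fcal$. Carrying out the gluing and checking flatness and torsion-freeness is precisely the point at which the arguments of \cite{HLbook} (7.2.8) and \cite{Frank-rest} are invoked; the only novelty in our situation is to verify that the decoration descends compatibly---that $\eps$ is additive along the gluing---which is guaranteed by the dichotomy $\eps^q(\ai)=0$, $\eps(\ai)=1$ above. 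Once $\fcal$ is obtained, $\dmu(\fcal)>\dmu(\ecal)$ contradicts the slope \fr-semistability of $(\ecal,\varphi)$, completing the proof.
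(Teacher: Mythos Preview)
Your overall strategy---contradiction, relative maximal destabilizing subsheaf, stabilization of rank, determinant and $\eps$ via Corollary \ref{cor-constant-slope} and Lemma \ref{lem-constant-lin-bun}, then descent to $X$---matches the paper's. The divergence is in how the descent is actually carried out, and there your mechanism is not the one that works. You propose to glue the fibrewise quotients $\rest{\qcal_\ai}{D}$ into a global $\qcal$ on $X$ by exploiting the subadditivity equality on reducible divisors $D=D_1+D_2$ and ``propagating compatibility over the covering family''. This is not how the argument in \cite{HLbook} (Theorem 7.2.1) runs, and making such a gluing rigorous would be delicate: compatibility of quotients on a family of curves does not by itself assemble into a subsheaf of $\ecal$ on the surface. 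The paper instead linearizes the problem via the Pl\"ucker map. The rank-$\rqq$ quotient $\rest{\ecal}{D}\twoheadrightarrow Q_D$ induces $\sigma_D\colon\Lambda^{\rqq}\rest{\ecal}{D}\to L_D=\rest{L}{D}$, and Serre duality together with Serre vanishing give $\Hom(\Lambda^{\rqq}\ecal,L)\xrightarrow{\ \sim\ }\Hom(\Lambda^{\rqq}\rest{\ecal}{D},\rest{L}{D})$ for $\ai\gg0$; so $\sigma_D$ extends uniquely to $\sigma\colon\Lambda^{\rqq}\ecal\to L$ on $X$. One then checks, as in the end of the proof of \cite{HLbook} Theorem 7.2.1, that the associated rational map $X\dashrightarrow\pp(\Lambda^{\rqq}\ecal)$ is a morphism factoring through $\mathrm{Grass}(\ecal,\rqq)$, which produces the global quotient $\ecal\twoheadrightarrow\qcal$ with $\det\qcal=L$ and kernel $\fcal$.

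For the decoration, the paper does not rest on the dichotomy $\eps(\ai)=1$ and ``openness of nonvanishing'' alone; that openness goes the wrong direction, since what you need is that $\rest{\varphi}{\fcal\ab}\neq0$ on $X$ is \emph{detected} by its restriction to $D$. The paper secures this by a second vanishing argument: Fujita's vanishing theorem gives $\mbox{Ext}^j(\fcal\ab,\linebu(-\ai))=0$ for $j=0,1$ and $\ai\gg0$, hence $\Hom(\fcal\ab,\linebu)\xrightarrow{\ \sim\ }\Hom(\rest{\fcal\ab}{D},\rest{\linebu}{D})$ (and similarly for $\ecal$), forcing $\eps(\rest{\varphi}{\fcal})=\eps(\ai)$. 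With this in hand $(\fcal,\rest{\varphi}{\fcal})$ slope-\fr-destabilizes $(\ecal,\varphi)$, giving the contradiction.
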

\begin{proof}
We proof the theorem by reduction to absurd. Suppose the theorem is false: thanks to the previous constructions there exists a line bundle $L_{\ai}$ such that
\begin{equation*}
\frac{\deg(L_\ai)-a\deltabar\eps^q(\ai)}{\rkq{\ai}}<\dmu(\ecal)
\end{equation*}
and $1\leq\rkq{\ai}\leq\rk{\ecal}$. We recall that $\rkq{\ai}$ and $L_\ai$ are constant for $\ai$ greater than a certain constant $\ai_0$, so from now on we suppose that $\ai$ is so and we call $L_\ai=L$ and $\rkq{\ai}=\rqq$. We want to construct a rank $\rqq$ quotient $\qcal$ of $\ecal$ such that 
%$\fcal\doteqdot\ker(\ecal\twoheadrightarrow\qcal)$ has induced decoration equal to zero and 
$\det(\qcal)=L$.\\
Let $\ai$ be a sufficiently large integer, $D\in U_\ai$ and let $(\sudi{\fcal},\rest{\varphi}{\sudi{\fcal}})$ be the maximal decorated destabilizing subsheaf of $\rest{(\ecal,\varphi)}{D}$ and $\sudi{\qcal}\doteqdot\coker(\sudi{\fcal}\hookrightarrow\rest{\ecal}{D})$ the associated minimal decorated destabilizing quotient. Put $L_D\doteqdot\det Q_D$ and note that $L_D=\rest{L}{D}$ (by uniqueness of the maximal destabilizing subsheaf and so of the minimal destabilizing quotient). The surjective morphism $\rest{\ecal}{D}\to Q_D$ induces a surjective homomorphism $\sigma_D\colon\Lambda^\rqq\rest{\ecal}{D}\to L_D$ and morphisms
\begin{equation*}
\rest{i}{D}\colon D\longrightarrow\mbox{Grass}(\rest{\ecal}{D},\rqq)\longrightarrow\pp(\Lambda^\rqq\rest{\ecal}{D}).
\end{equation*}
Consider the exact sequence
\begin{equation*}
\Hom(\Lambda^\rqq\ecal,L(-\ai))\to\Hom(\Lambda^\rqq\ecal,L)\stackrel{f}{\longrightarrow}\Hom(\Lambda^ \rqq\rest{\ecal}{D},\rest{L}{D})\to\mbox{Ext}^1(\Lambda^\rqq\ecal,L(-\ai)).
\end{equation*}
By Serre's theorem and Serre duality one has that for $i=0,1$ and $\ai\gg0$
\begin{equation*}
\mbox{Ext}^i(\Lambda^\rqq\ecal,L(-\ai))=H^{n-i}(X,\Lambda^\rqq\ecal\otimes L^{\vee}\otimes\omega_X(\ai))=0.
\end{equation*}
Hence if $\ai$ is big enough $f$ is bijective and $\sigma_D$ extends uniquely to a homomorphism $\sigma\in\Hom(\Lambda^\rqq\ecal,L)$. Using the same arguments of the final part of the proof of Theorem 7.2.1 in \cite{HLbook}, $\sigma$ induces a morphism $i\colon X\to \pp(\Lambda^\rqq\ecal)$ that factorize thorough $\mbox{Grass}(\ecal,\rqq)$ and so we obtain a quotient $q\colon\ecal\to\qcal$. Since $\rest{\det\qcal}{D}\equiv L_D=\rest{L}{D}$ for all $D\in U_\ai$, by Lemma 7.2.2 \cite{HLbook}, we get $L=\det\qcal$. Define $\fcal\doteqdot\ker(\ecal\to\qcal)$ and note that $\rest{\fcal}{D}=\sudi{\fcal}$. Finally, thanks to Fujita's vanishing theorem (\cite{Lazar} pg 66),
\begin{equation*}
H^i(X,\fcal\ab\otimes\linebu^{\vee}\otimes\omega_X(\ai))=0
\end{equation*}
for $i>0$ and $\ai$ big enough. Therefore
\begin{equation*}
 \mbox{Ext}^j(\fcal\ab,\linebu(-\ai))= H^{n-j}(X,\fcal\ab\otimes\linebu^{\vee}\otimes\omega_X(\ai))=0
\end{equation*}
for $j=0,1$. The same holds also for $\ecal$ and so the following diagram is commutative:
\begin{equation*}
\xymatrix{
 \Hom(\fcal\ab,\linebu) \ar@{<->}[r] & \Hom(\rest{\fcal\ab}{D},\rest{\linebu}{D})\\
 \Hom(\ecal\ab,\linebu) \ar[u]\ar@{<->}[r] & \Hom(\rest{\ecal\ab}{D},\rest{\linebu}{D}) \ar[u]\\
}
\end{equation*}
which proves that we can extend to all $\fcal\ab$ the morphism we have over $\rest{\fcal\ab}{D}$ in such a way that $\eps(\rest{\varphi}{\fcal})=\eps(\ai)$. By construction $(\fcal,\rest{\varphi}{\fcal})$ destabilizes, with respect to the slope \fr-semistability, the decorated sheaf $(\ecal,\varphi)$ and this contradicts the hypothesis.
\end{proof}
\section[Mehta-Ramanathan for slope $\kkvoid$-semistability]{Mehta-Ramanathan theorem for slope $\kkvoid$-semistable decorated sheaves of rank $2$ and $3$}\label{sec-mehta-ramanathan-2}
\begin{notation}
 Let $X$ be a smooth projective variety, $\amplebu{1}$ a fixed ample line bundle over $X$, $k$ an algebraic closed field of characteristic $0$, $S$ an integral $k$-scheme of finite type and $f:X\to S$ a projective flat morphism. Note that $\amplebu{1}$ is also $f$-ample.
\end{notation}

\begin{proposition}[\textbf{Properties of $\kk{\fcal}{\ecal}$}]\label{prop-proprieta-di-k}
Let $(\ecal,\varphi)$ be a decorated sheaf of type $(a,b,\linebu)$ and rank $r$. Let $\gcal,\fcal$ be subsheaves of $\ecal$. Then the following statements hold:
\begin{enumerate}
\item There exist an open subset $U\subseteq X$ and complex vector spaces $V'$ and $V$ of dimension $\rk{\fcal}$ and $r$ (respectively) such that $\rest{\fcal}{U}\simeq V'\otimes\ocal_U$, $\rest{\ecal}{U}\simeq V\otimes\ocal_U$ and $\kk{\fcal}{\ecal}=\kk{\rest{\fcal}{U}}{\rest{\ecal}{U}}$.
\item If there exists an open subset $U$ of $X$ such that $\rest{\fcal}{U}$ is isomorphic to $\rest{\gcal}{U}$, then $\kk{\fcal}{\ecal}=\kk{\gcal}{\ecal}$.
\item $\kk{\fcal+\gcal}{\ecal}\geq\max\{\kk{\fcal}{\ecal} \,,\, \kk{\gcal}{\ecal} \}$.
\item $\kk{\fcal\cap\gcal}{\ecal}\leq\min\{\kk{\fcal}{\ecal} \,,\, \kk{\gcal}{\ecal} \}$.
\item $\kk{\fcal}{\ecal}+\kk{\gcal}{\ecal}\geq\kk{\fcal+\gcal}{\ecal}$.
\item If $\kk{\fcal\cap\gcal}{\ecal}=\kk{\fcal\cap\gcal}{\fcal+\gcal}$ then
\begin{equation}\label{eq-disuguaglianza-tra-i-kk}
 \kk{\fcal+\gcal}{\ecal}+\kk{\fcal\cap\gcal}{\ecal}\leq\kk{\fcal}{\ecal}+\kk{\gcal}{\ecal}
\end{equation}
in particular 
\begin{equation*}
 \kk{\fcal+\gcal}{\fcal+\gcal}+\kk{\fcal\cap\gcal}{\fcal+\gcal}\leq\kk{\fcal}{\fcal+\gcal}+\kk{\gcal}{\fcal+\gcal}.
\end{equation*}
\end{enumerate}
\end{proposition}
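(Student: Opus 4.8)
The plan is to reduce the whole statement to multilinear algebra over a single dense open set and then to derive each item from two elementary principles about the fixed morphism $\varphi$: \emph{enlargement}, i.e.\ enlarging one factor of a mixed tensor product preserves the nonvanishing of $\varphi$ (a nonzero restriction stays nonzero on a bigger subsheaf); and \emph{expansion}, i.e.\ if $\varphi$ is nonvanishing on a product having a factor equal to $\fcal+\gcal$, then it is already nonvanishing when that factor is replaced either by $\fcal$ or by $\gcal$, since $\fcal+\gcal$ is generated by $\fcal\cup\gcal$ and the tensor slot decomposes accordingly.

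First I would prove (1). On the smooth $X$ the torsion-free sheaves $\ecal,\fcal,\gcal$ together with their sum and intersection are locally free on a common dense open $U$, where $\rest\ecal U\simeq V\otimes\odi U$ and $\rest\fcal U\simeq V'\otimes\odi U$ for vector spaces $V'\subseteq V$ of the stated dimensions. Since $\linebu$ is torsion-free, for any coherent $W$ the sheaf $\homsheaf(W,\linebu)$ is torsion-free, so its sections inject into their restrictions to the dense open $U$; hence $\rest\varphi W\equiv0$ if and only if $\rest{(\rest\varphi U)}{\rest W U}\equiv0$ for every mixed tensor product $W$ of copies of $\fcal$ and $\ecal$. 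Consequently the integer produced by \eqref{eq-def-kk} is unchanged by restriction to $U$, which is (1). Then (2) follows at once: after shrinking $U$ so that (1) applies, $\kk\fcal\ecal$ is determined by the embedded restriction together with $\rest\varphi U$, so it agrees for $\fcal$ and $\gcal$ once these coincide there.

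From here everything is linear algebra on $U$, where $\kk\fcal\ecal$ is the largest number of tensor slots that can be set equal to $V'$ (the others equal to $V$) while keeping $\varphi$ nonvanishing, and similarly for the remaining subspaces and ambients in the statement. Enlargement gives monotonicity in the first argument, $\fcal\subseteq\fcal'\Rightarrow\kk\fcal\ecal\le\kk{\fcal'}\ecal$, whence (3) (apply it to $\fcal,\gcal\subseteq\fcal+\gcal$) and (4) (apply it to $\fcal\cap\gcal\subseteq\fcal,\gcal$). For (5) I take an optimal configuration for $\kk{\fcal+\gcal}\ecal$ and expand each of its factors equal to $\fcal+\gcal$; this yields a nonvanishing configuration with $p$ factors $\fcal$, $q$ factors $\gcal$ and the rest $\ecal$, where $p+q=\kk{\fcal+\gcal}\ecal$. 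Enlarging the $\gcal$-factors to $\ecal$ gives $\kk\fcal\ecal\ge p$, and symmetrically $\kk\gcal\ecal\ge q$, so (5) holds.

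The crux, and the main obstacle, is (6), where the hypothesis $\kk{\fcal\cap\gcal}\ecal=\kk{\fcal\cap\gcal}{\fcal+\gcal}$ is indispensable. Put $m\doteqdot\kk{\fcal\cap\gcal}{\fcal+\gcal}$. If $\rest\varphi{(\fcal+\gcal)\ab}\equiv0$ then $m=0$, so by hypothesis $\kk{\fcal\cap\gcal}\ecal=0$ and (6) collapses to (5). Otherwise I start from an optimal configuration for $m$, consisting of $m$ factors $\fcal\cap\gcal$ and $a-m$ factors $\fcal+\gcal$ with $\varphi$ nonvanishing, and expand the $a-m$ factors $\fcal+\gcal$ into a set $S'$ of $\fcal$-factors and a set $S''$ of $\gcal$-factors, so that $\varphi$ stays nonvanishing and $|S'|+|S''|=a-m$. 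Since $\fcal\cap\gcal\subseteq\fcal$, enlarging the $m$ factors $\fcal\cap\gcal$ to $\fcal$ and the $S''$-factors to $\ecal$ leaves a nonvanishing configuration with $m+|S'|$ factors equal to $\fcal$, whence $\kk\fcal\ecal\ge m+|S'|$; symmetrically $\kk\gcal\ecal\ge m+|S''|$. Adding the two bounds gives $\kk\fcal\ecal+\kk\gcal\ecal\ge 2m+|S'|+|S''|=a+m$; since $\kk{\fcal+\gcal}\ecal\le a$ and, by hypothesis, $\kk{\fcal\cap\gcal}\ecal=m$, this is at least $\kk{\fcal+\gcal}\ecal+\kk{\fcal\cap\gcal}\ecal$, which is exactly \eqref{eq-disuguaglianza-tra-i-kk}. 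The delicate point — and the reason the hypothesis cannot be dropped — is that the $m$ overlap-factors are charged \emph{simultaneously} to both $\fcal$ and $\gcal$; this double counting is legitimate only because the detection of $\fcal\cap\gcal$ is measured inside $\fcal+\gcal$, not inside the larger $\ecal$. Finally, the ``in particular'' statement is (6) applied with $\ecal$ replaced by $\fcal+\gcal$, for which the hypothesis holds tautologically.
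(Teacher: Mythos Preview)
Your proof is correct and follows essentially the same route as the paper: reduce to a trivializing open set via the torsion-freeness of $\homsheaf(W,\linebu)$, deduce (2)--(4) from monotonicity, and obtain (5) and (6) by breaking each $\fcal+\gcal$ tensor slot into an $\fcal$-slot or a $\gcal$-slot. The paper implements this last step through an explicit binomial expansion of $(f+g)^{\otimes t}$ together with the vanishing forced by the value of $\kk{\fcal}{\ecal}$, whereas your ``enlargement/expansion'' formulation packages the same multilinear argument more transparently; in particular your treatment of (6) makes explicit why the hypothesis $\kk{\fcal\cap\gcal}{\ecal}=\kk{\fcal\cap\gcal}{\fcal+\gcal}$ is what licenses counting the $\fcal\cap\gcal$-slots toward both $\kk{\fcal}{\ecal}$ and $\kk{\gcal}{\ecal}$, a point the paper leaves implicit.
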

\begin{proof}
 \begin{enumerate}
  \item Let $U_{\fcal}$ be a maximal open subset where $\fcal$ is locally free and admits a trivialization. Suppose that $\kk{\fcal}{\ecal}=k$, then there exist an open subset $U'\subseteq X$, $k$ local sections $f_1,\dots,f_k\in H^0(U',\rest{\fcal}{U'})$ and $a-k$ local sections $e_1,\dots,e_{a-k}\in H^0(U',\rest{\ecal}{U'})$ such that 
\begin{equation*}
 \varphi((f_1,\dots,f_t,e_1,\dots,e_{a-t})^{\oplus b})\neq0.
\end{equation*}
 Let $U\doteqdot U'\cap U_\fcal$, then $\rest{\ecal}{U}\simeq V\otimes\ocal_U$ and $\kk{\fcal}{\ecal}=\kk{\rest{\fcal}{U}}{\rest{\ecal}{U}}$.
  \item The statement follows directly from $(1)$.
  \item Follows from the fact that $\fcal,\gcal\subseteq\fcal+\gcal$.
  \item Since $\fcal\cap\gcal\subseteq\fcal,\gcal$, it is easy to see that the statement holds.
  \item Let $t=\kk{\fcal+\gcal}{\ecal}$ and $k=\kk{\fcal}{\ecal}$. Thanks to the first point, we can suppose that $\fcal\cap\gcal,\fcal,\gcal$ and $\fcal+\gcal$ are trivial sheaves. Let $f_1+g_1,\dots f_t+g_t$ and $e_1,\dots e_{a-t}$ be sections of $\fcal+\gcal$ and $\ecal$ respectively such that $f_i$ are sections of $\fcal$, $g_i$ are sections of $\gcal$ and $\varphi((f_1+g_1,\dots f_t+g_t,e_1,\dots e_{a-t})^{\oplus b})\neq 0$. Let $f=\sum f_i$, $g=\sum g_i$ and $e=\sum e_i$, then also $\varphi(((f+g)^{\otimes t}\otimes e^{\otimes a-t})^{\oplus b})\neq 0$. But
  \begin{equation*}
   (f+g)^{\otimes t}\otimes e^{\otimes a-t}=\left(\sum_{i=0}^{t} \binom{t}{i} f^{\otimes t-i}\otimes g^{\otimes i}\right)\otimes e^{\otimes a-t}.
  \end{equation*}
  Since $k\leq t$ there exists $i_0\geq0$ such that $t-i_0=k$. Then for any $0\leq i<i_0$ one has $\varphi((f^{\otimes t-i}\otimes g^{\otimes i}\otimes e^{\otimes a-t})^{\oplus b})=0$, since $\kk{\fcal}{\ecal}=k$ and $t-i>k$. Therefore 
  \begin{equation*}
   \varphi\left(\left(\sum_{i=i_0}^{t} \binom{t}{i} f^{\otimes t-i}\otimes g^{\otimes i}\otimes e^{\otimes a-t}\right)^{\oplus b}\right)\neq0
  \end{equation*}
  and so $\kk{\gcal}{\ecal}\geq i_0=t-k=\kk{\fcal+\gcal}{\ecal}-\kk{\fcal}{\ecal}$.
  \item Let $s=\kk{\fcal\cap\gcal}{\ecal}=\kk{\fcal\cap\gcal}{\fcal+\gcal}$. If $s=0$ there is nothing to prove. Otherwise, similarly to the proof of the previous point, we can choose a section $h$ of $\fcal\cap\gcal$ and sections $f$ and $g$ of $\fcal$ and $\gcal$ respectively such that $f+g$ is a section of $\fcal+\gcal$ and $\varphi((h^{\oplus s}\oplus(f+g)^{\oplus a-s})^{\oplus b})\neq 0$. In particular note that $\kk{\fcal+\gcal}{\ecal}=\kk{\fcal+\gcal}{\fcal+\gcal}=a$. Then is easy to see that $\kk{\gcal}{\ecal}\geq a - \kk{\fcal}{\ecal} + s$.
\end{enumerate}
\end{proof}

Let $(\ecal,\varphi)$ be a decorated sheaf and $\fcal$ a subsheaf of $\ecal$. As usual denote
\begin{align*}
 & \hilpol{\fcal}^{\kkvoid} \doteqdot \hilpol{\fcal}-\delta\kk{\fcal}{\ecal},\\
 & \hilpolred{\fcal}^\kkvoid \doteqdot \hilpol{\fcal}^\kkvoid/\rk{\fcal},\\
 & \deg^\kkvoid(\fcal)\doteqdot\deg(\fcal)-\deltabar\kk{\fcal}{\ecal},\\
 & \mu^\kkvoid(\fcal)\doteqdot\deg(\fcal)^\kkvoid/\rk{\fcal}.
\end{align*}
We recall that $(\ecal,\varphi)$ is \textbf{$\kkvoid$-(semi)stable}, respectively \textbf{slope $\kkvoid$-(semi)stable}, if and only if for any $\fcal\subset\ecal$
\begin{equation*}
 \hilpolred{\fcal}^\kkvoid\ppreceq\hilpolred{\ecal}^\kkvoid,
\end{equation*}
or
\begin{equation*}
 \mu^\kkvoid(\fcal)\lleq\mu^\kkvoid(\ecal),
\end{equation*}
respectively.
\subsection{Maximal destabilizing subsheaf}\label{maximal-dest-subsheaf2}
\begin{notation}
 In this section, unless otherwise stated, any decorated sheaf will have rank $r\leq3$.
\end{notation}

\begin{proposition}\label{prop-k-maximal-dest-subsheaf}
 Let $(\ecal,\varphi)$ be a decorated sheaf of type $(a,b,\linebu)$ and rank $r=2$ or $r=3$. If $(\ecal,\varphi)$ is not slope $\kkvoid$-semistable then there exists a unique, $\kkvoid$-slope-semistable subsheaf $\fcal$ of $\ecal$ such that:
\begin{enumerate}
 \item $\mu^\kkvoid(\fcal)\geq\mu^\kkvoid(\wcal)$ for any $\wcal\subset\ecal$.
 \item If $\mu^\kkvoid(\fcal)=\mu^\kkvoid(\wcal)$ then $\wcal\subseteq\fcal$.
\end{enumerate}
The subsheaf $\fcal$, with the induced morphism $\rest{\varphi}{\fcal}$, is called the \textbf{maximal slope $\kkvoid$-destabilizing subsheaf}.
\end{proposition}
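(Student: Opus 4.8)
The plan is to transcribe the proof of Proposition~\ref{prop-maximal-dest-subsheaf}, systematically replacing the reduced decorated Hilbert polynomial $\hilpolred{}^\eps$ by the slope $\mu^\kkvoid$ and the submodularity of the function $\eps$ by that of the function $\kk{}{}$ collected in Proposition~\ref{prop-proprieta-di-k}. On the set of subsheaves of $\ecal$ I would introduce the partial order
\begin{equation*}
 \fcal_1\preccurlyeq\fcal_2\quad\Longleftrightarrow\quad\fcal_1\subseteq\fcal_2\ \text{ and }\ \deg^\kkvoid(\fcal_1)\,\rk{\fcal_2}\leq\deg^\kkvoid(\fcal_2)\,\rk{\fcal_1},
\end{equation*}
that is $\fcal_1\subseteq\fcal_2$ together with $\mu^\kkvoid(\fcal_1)\leq\mu^\kkvoid(\fcal_2)$; this is a genuine partial order since, once $(\ecal,\varphi)$ is fixed, $\mu^\kkvoid(\cdot)$ is a well-defined number attached to every subsheaf. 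Exactly as in Proposition~\ref{prop-maximal-dest-subsheaf}, $\preccurlyeq$ satisfies the hypotheses of Zorn's Lemma, so I may take $\fcal$ to be a $\preccurlyeq$-maximal subsheaf of minimal rank. I would first observe that such an $\fcal$ is saturated: its saturation $\overline{\fcal}$ has the same rank, satisfies $\kk{\overline{\fcal}}{\ecal}=\kk{\fcal}{\ecal}$ by Proposition~\ref{prop-proprieta-di-k}(2) and has $\deg\overline{\fcal}\geq\deg\fcal$, so $\fcal\preccurlyeq\overline{\fcal}$ and maximality forces $\fcal=\overline{\fcal}$.

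The engine of the argument is the superadditivity of $\deg^\kkvoid$. Feeding the rank- and degree-additivity coming from the exact sequence $0\to\fcal\cap\gcal\to\fcal\oplus\gcal\to\fcal+\gcal\to0$ into the definition of $\deg^\kkvoid$ gives
\begin{equation*}
 \deg^\kkvoid(\fcal)+\deg^\kkvoid(\gcal)-\deg^\kkvoid(\fcal\cap\gcal)-\deg^\kkvoid(\fcal+\gcal)=-\deltabar\bigl(\kk{\fcal}{\ecal}+\kk{\gcal}{\ecal}-\kk{\fcal\cap\gcal}{\ecal}-\kk{\fcal+\gcal}{\ecal}\bigr).
\end{equation*}
Whenever the submodularity inequality $\kk{\fcal+\gcal}{\ecal}+\kk{\fcal\cap\gcal}{\ecal}\leq\kk{\fcal}{\ecal}+\kk{\gcal}{\ecal}$ of Proposition~\ref{prop-proprieta-di-k}(6) holds, the right-hand side is $\leq0$ because $\deltabar>0$, so $\deg^\kkvoid(\fcal)+\deg^\kkvoid(\gcal)\leq\deg^\kkvoid(\fcal\cap\gcal)+\deg^\kkvoid(\fcal+\gcal)$. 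Granting this superadditivity, the seesaw computation of Proposition~\ref{prop-maximal-dest-subsheaf} goes through verbatim: for any $\gcal$ with $\mu^\kkvoid(\gcal)\geq\mu^\kkvoid(\fcal)$ I replace $\gcal$ by its saturation and then by $\fcal\cap\gcal$, using that $\fcal\subsetneq\fcal+\gcal$ forces $\mu^\kkvoid(\fcal)>\mu^\kkvoid(\fcal+\gcal)$ by maximality, in order to deduce $\mu^\kkvoid(\fcal\cap\gcal)>\mu^\kkvoid(\gcal)$ with $\fcal\cap\gcal\subsetneq\fcal$; descending induction on the rank then yields properties (1) and (2), whence the uniqueness and the slope $\kkvoid$-semistability of $\fcal$ follow formally as in Proposition~\ref{prop-maximal-dest-subsheaf}.

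The one non-formal point, and the crux, is that Proposition~\ref{prop-proprieta-di-k}(6) carries the extra hypothesis $\kk{\fcal\cap\gcal}{\ecal}=\kk{\fcal\cap\gcal}{\fcal+\gcal}$, which fails in general; this is exactly where $r\leq3$ enters. In the seesaw step the relevant $\fcal$ and $\gcal$ are always \emph{saturated and incomparable} ($\fcal\subseteq\gcal$ would give $\fcal\preccurlyeq\gcal$, contradicting maximality, and the reduction already assumes $\gcal\not\subseteq\fcal$). I would then argue by a short case analysis. If $\rk{\fcal+\gcal}=r$, then $\fcal+\gcal$ and $\ecal$ coincide on a dense open set, so Proposition~\ref{prop-proprieta-di-k}(1) gives $\kk{\fcal\cap\gcal}{\fcal+\gcal}=\kk{\fcal\cap\gcal}{\ecal}$ and the hypothesis of (6) is automatic. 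If $\rk{\fcal+\gcal}<r$, then for $r\leq3$ two saturated incomparable subsheaves with non-full-rank sum must both be of rank $1$ with distinct saturations — the configurations $1+2$ and $2+2$ all force $\rk{\fcal+\gcal}=3$ once the summands are saturated and incomparable — so $\rk{\fcal\cap\gcal}=0$ and hence $\fcal\cap\gcal=0$ because $\ecal$ is torsion free; then $\kk{\fcal\cap\gcal}{\ecal}=0$ and submodularity reduces to Proposition~\ref{prop-proprieta-di-k}(5). In this degenerate branch the seesaw division by $\rk{\fcal\cap\gcal}$ is illegitimate, but superadditivity used directly gives $\mu^\kkvoid(\fcal+\gcal)\geq\mu^\kkvoid(\fcal)$, contradicting the maximality of $\fcal$, so the branch closes at once. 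For $r\geq4$ one genuinely encounters two saturated rank-$2$ subsheaves meeting in rank $1$ and spanning only rank $3$, where (6) is unavailable, and this is precisely why the statement is confined to ranks $2$ and $3$.
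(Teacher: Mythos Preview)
Your proof is correct and follows essentially the same strategy as the paper's: both transplant the Zorn's-lemma/seesaw argument of Proposition~\ref{prop-maximal-dest-subsheaf} and reduce the crux to verifying the submodularity inequality $\kk{\fcal+\gcal}{\ecal}+\kk{\fcal\cap\gcal}{\ecal}\leq\kk{\fcal}{\ecal}+\kk{\gcal}{\ecal}$ by a rank case analysis valid only for $r\leq 3$. The only organisational difference is that the paper runs through all possible rank configurations of $(\fcal\cap\gcal,\fcal,\gcal,\fcal+\gcal)$ directly (a short table), whereas you first observe that $\fcal$ and $\gcal$ may be taken saturated and incomparable, which prunes the degenerate equal-rank rows from the table and leaves just the two situations $\rk{\fcal+\gcal}=r$ (where the hypothesis of Proposition~\ref{prop-proprieta-di-k}(6) is automatic) and $\rk{\fcal\cap\gcal}=0$ (handled by Proposition~\ref{prop-proprieta-di-k}(5)); this is a mild streamlining but not a different idea.
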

\begin{proof}
 Define the following partial ordering on the set of decorated subsheaves of $\ecal$. Let $\fcal_1,\fcal_2$ be two subsheaves of $\ecal$; then
\begin{equation*}
 \fcal_1 \preccurlyeq^{\kkvoid} \fcal_2 \;\Longleftrightarrow\; \fcal_1\subseteq\fcal_2 \text{ and } \mu^\kkvoid(\fcal)\leq\mu^\kkvoid(\ecal).
\end{equation*}
The set of subsheaves of $\ecal$ with this ordering relation satisfies the hypotheses of Zorn's Lemma, so there exists a maximal element (not unique in general). Choose an element $\fcal$ in the following set:
\begin{equation*}
 \min_{\rk\gcal}\{\gcal\subset\ecal \;|\; \gcal \text{ is } \preccurlyeq^{\kkvoid}\text{-maximal}\}.
\end{equation*}
Then we claim that $(\fcal,\rest{\varphi}{\fcal})$ has the asserted properties.\\

By contradiction, suppose that there exists $\gcal\subset\ecal$ such that $ \mu^\kkvoid(\gcal)\geq\mu^\kkvoid(\fcal)$, i.e.,
\begin{equation*}
 \mu(\gcal)-\frac{\deltabar \, \kk{\gcal}{\ecal}}{\rkk{\gcal}}\geq\mu(\fcal)-\frac{\deltabar \, \kk{\fcal}{\ecal}}{\rkk{\fcal}}.
\end{equation*}

\textbf{Claim. } We can assume $\gcal\subseteq\fcal$ by replacing $\gcal$ by $\gcal\cap\fcal$.\\

Indeed, if $\gcal\not\subseteq\fcal$, $\fcal$ is a proper subsheaf of $\fcal+\gcal$ since (by the assumpions we made on the $\kkvoid$-slope of $\gcal$ and by maximality of $\fcal$) $\fcal\not\subset\gcal$. By maximality
\begin{equation*}
 \mu^\kkvoid(\fcal) > \mu^\kkvoid(\fcal+\gcal).
\end{equation*}
Using the exact sequence
\begin{equation*}
 0\longrightarrow\fcal\cap\gcal\longrightarrow\fcal\oplus\gcal\longrightarrow\fcal+\gcal\longrightarrow0
\end{equation*}
one finds, following calculations we made in the proof of Proposition \ref{prop-maximal-dest-subsheaf}, that
\begin{equation}\label{eq-deve-essere-minore-zero}
 \rkk{\fcal\cap\gcal}(\mu^\kkvoid(\gcal)-\mu^\kkvoid(\fcal\cap\gcal)) < \deltabar(\kk{\fcal+\gcal}{\ecal}+\kk{\fcal\cap\gcal}{\ecal}-\kk{\fcal}{\ecal}-\kk{\gcal}{\ecal}).
\end{equation}
Therefore \underline{if} $(\kk{\fcal+\gcal}{\ecal}+\kk{\fcal\cap\gcal}{\ecal}-\kk{\fcal}{\ecal}-\kk{\gcal}{\ecal})\leq0$ then $\mu^\kkvoid(\fcal\cap\gcal)\geq\mu^\kkvoid(\gcal)$ and the claim holds true.\\

First suppose that $r=2$.\\
Consider $\fcal\cap\gcal$. If $\rk{\fcal\cap\gcal}=0$ then $\kk{\fcal\cap\gcal}{\ecal}=0$ and, thanks to point $(5)$ of Proposition \ref{prop-proprieta-di-k}, the right part of equation \eqref{eq-deve-essere-minore-zero} is less or equal to zero, and the claim holds true. If $\rk{\fcal\cap\gcal}=2$ then $\rkk{\ecal}=\rkk{\gcal}=\rkk{\fcal}=\rkk{\fcal\cap\gcal}=\rkk{\fcal+\gcal}$, $\fcal$ coincides, up to a rank zero sheaf $\tcal=\ecal/\fcal$, with $\ecal$ and $\kk{\fcal+\gcal}{\ecal}=\kk{\fcal}{\ecal}=\kk{\gcal}{\ecal}=\kk{\fcal\cap\gcal}{\ecal}=\kk{\ecal}{\ecal}=a$. Since $\deg(\ecal)=\deg(\fcal)+\deg(\tcal)$ and $\kk{\fcal}{\ecal}=a=\kk{\ecal}{\ecal}$ one get that $\mu^\kkvoid(\fcal)\leq\mu^\kkvoid(\ecal)$ and $\fcal$ is not $\preccurlyeq^{\kkvoid}$-maximal, which is absurd. Therefore $\rk{\fcal\cap\gcal}=1$. If $\rkk{\fcal}$ of $\rkk{\gcal}$ are equal to $2$ then, as before, one easily gets that $\fcal$ is not maximal, against the assumptions. The only chance is that $\rkk{\fcal}=\rkk{\gcal}=\rkk{\fcal\cap\gcal}=\rkk{\fcal+\gcal}=1$ and so all these sheaves coincide with each other up to rank zero sheaves. Thus $\kk{\gcal}{\ecal}=\kk{\fcal}{\ecal}=\kk{\fcal\cap\gcal}{\ecal}=\kk{\fcal+\gcal}{\ecal}$. Therefore the inequality \ref{eq-disuguaglianza-tra-i-kk} holds true and $\mu^\kkvoid(\fcal\cap\gcal)\geq\mu^\kkvoid(\gcal)\geq\mu^\kkvoid(\fcal)$.\\
%so $\deg(\fcal\cap\gcal)\geq\deg(\gcal)\geq\deg(\fcal)$ but $\deg(\fcal\cap\gcal)=\deg(\fcal)+\deg(\tcal)$ for $\tcal=\fcal/(\fcal\cap\gcal)$ a torsion sheaf, which is absurd. So assertion $(1)$ is proved and the uniqueness and semistability of $\fcal$ easily follows.\\

Now suppose that $r=3$.\\ 
If $\rk{\fcal\cap\gcal}=0$ then $\kk{\fcal\cap\gcal}{\ecal}=0$ and the right part of equation \eqref{eq-deve-essere-minore-zero} is less or equal to zero and the claim holds true. If $\rk{\fcal\cap\gcal}=3$ as before we easily fall in contradiction. If $\rk{\fcal}=3$ then $\kk{\fcal}{\ecal}=a$ and $\fcal$ coincides, up to a rank zero sheaf, with $\ecal$; so $\mu^\kkvoid(\fcal)\leq\mu^\kkvoid(\ecal)$ and $\fcal$ is not maximal, that is absurd. Similarly if $\rk{\gcal}=3$, then $\mu^\kkvoid(\fcal)\leq\mu^\kkvoid(\gcal)\leq\mu^\kkvoid(\ecal)$, that is again absurd. Therefore the possible cases are the following:\\

\noindent\\
  \begin{tabular}{|c|c|c|c|c|}
 \hline
 $\rk{\fcal\cap\gcal}$ & $\rk{\fcal}$ & $\rk{\gcal}$ & $\rk{\fcal+\gcal}$ & implies\\
 \hline
 $1$ & $1$ & $1$ & $1$ & $\kk{\fcal\cap\gcal}{\ecal}=\kk{\fcal}{\ecal}=\kk{\gcal}{\ecal}=\kk{\fcal+\gcal}{\ecal}$\\
 \hline
 $1$ & $1$ & $2$ & $2$ & $\kk{\fcal\cap\gcal}{\ecal}=\kk{\fcal}{\ecal}$ and $\kk{\gcal}{\ecal}=\kk{\fcal+\gcal}{\ecal}$\\
 \hline
 $1$ & $2$ & $1$ & $2$ & $\kk{\fcal\cap\gcal}{\ecal}=\kk{\gcal}{\ecal}$ and $\kk{\fcal}{\ecal}=\kk{\fcal+\gcal}{\ecal}$\\
 \hline
 $1$ & $2$ & $2$ & $3$ & $\kk{\fcal+\gcal}{\ecal}=\kk{\ecal}{\ecal}=a$\\
 \hline
  $2$ & $2$ & $2$ & $2$ & $\kk{\fcal+\gcal}{\ecal}=\kk{\ecal}{\ecal}=a$\\
 \hline
  \end{tabular}
\noindent\\
\noindent\\

The non-trivial cases are the following: $\rk{\fcal\cap\gcal}=1$ and $\rk{\fcal}=\rk{\gcal}=2$ or $\rk{\fcal\cap\gcal}=\rk{\fcal}=\rk{\gcal}=2$. In the first case $\rk{\fcal+\gcal}=3$ and so $\kk{\fcal+\gcal}{\ecal}=a$ and $\kk{\fcal\cap\gcal}{\fcal+\gcal}=\kk{\fcal\cap\gcal}{\ecal}$, in the second case $\rk{\fcal+\gcal}=2$ and so $\kk{\fcal\cap\gcal}{\ecal}=\kk{\fcal+\gcal}{\ecal}=\kk{\fcal}{\ecal}=\kk{\gcal}{\ecal}$. Therefore in both cases equation \eqref{eq-disuguaglianza-tra-i-kk} holds true and equation \eqref{eq-deve-essere-minore-zero} holds with the less or equal than zero. Then $\mu^\kkvoid(\fcal\cap\gcal)\geq\mu^\kkvoid(\gcal)\geq\mu^\kkvoid(\fcal)$ and the claim holds true.\\ 

Since we have proved the claim, the proof may continue as the proof of Proposition \ref{prop-maximal-dest-subsheaf}.
\end{proof}

\begin{proposition}
 Let $(\ecal,\varphi)$ be a decorated sheaf of type $(a,b,\linebu)$ and rank $r=2$ or $r=3$. If $(\ecal,\varphi)$ is not $\kkvoid$-semistable then exists a unique, $\kkvoid$-semistable subsheaf $\fcal$ of $\ecal$ such that:
\begin{enumerate}
 \item $\hilpolred{\fcal}^\kkvoid\preceq\hilpolred{\fcal}^\kkvoid$ for any $\wcal\subset\ecal$.
 \item If $\hilpolred{\fcal}^\kkvoid=\hilpolred{\wcal}^\kkvoid$ then $\wcal\subseteq\fcal$.
\end{enumerate}
The subsheaf $\fcal$, with the induced morphism $\rest{\varphi}{\fcal}$, is called the $\kkvoid$-\textbf{maximal destabilizing subsheaf}.
\end{proposition}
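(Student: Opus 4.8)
The plan is to imitate, almost verbatim, the proof of Proposition \ref{prop-k-maximal-dest-subsheaf}, systematically replacing the slope $\mu^\kkvoid$ by the reduced Hilbert polynomial $\hilpolred{}^\kkvoid$, the $\kkvoid$-degree by the $\kkvoid$-Hilbert polynomial $\hilpol{}^\kkvoid$, and $\deltabar$ by $\delta$, while using ``$\preceq$'' in place of ``$\leq$''. First I would put a partial ordering on the decorated subsheaves of $\ecal$ by declaring $\fcal_1\preccurlyeq^\kkvoid\fcal_2$ if and only if $\fcal_1\subseteq\fcal_2$ and $\hilpolred{\fcal_1}^\kkvoid\preceq\hilpolred{\fcal_2}^\kkvoid$. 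As in the slope case this satisfies the hypotheses of Zorn's Lemma, so I may choose $\fcal$ to be a $\preccurlyeq^\kkvoid$-maximal subsheaf of minimal rank, and then claim that $(\fcal,\rest{\varphi}{\fcal})$ has the two asserted properties.

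Arguing by contradiction, suppose some $\gcal\subset\ecal$ satisfies $\hilpolred{\gcal}^\kkvoid\succeq\hilpolred{\fcal}^\kkvoid$. The heart of the matter is again the claim that one may assume $\gcal\subseteq\fcal$ by replacing $\gcal$ with $\fcal\cap\gcal$. If $\gcal\not\subseteq\fcal$, maximality forces $\hilpolred{\fcal}^\kkvoid\succ\hilpolred{\fcal+\gcal}^\kkvoid$, and feeding the short exact sequence
\begin{equation*}
 0\longrightarrow\fcal\cap\gcal\longrightarrow\fcal\oplus\gcal\longrightarrow\fcal+\gcal\longrightarrow0
\end{equation*}
into the additivity of Hilbert polynomials and ranks yields the polynomial analogue of inequality \eqref{eq-deve-essere-minore-zero}, namely
\begin{equation*}
 \rkk{\fcal\cap\gcal}\bigl(\hilpolred{\gcal}^\kkvoid-\hilpolred{\fcal\cap\gcal}^\kkvoid\bigr)\prec\delta\bigl(\kk{\fcal+\gcal}{\ecal}+\kk{\fcal\cap\gcal}{\ecal}-\kk{\fcal}{\ecal}-\kk{\gcal}{\ecal}\bigr).
\end{equation*}
Thus, exactly as before, it suffices to show that the bracketed combination of $\kkvoid$-values is $\leq 0$.

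This is precisely where the rank restriction $r\leq3$ is used, and here I can borrow the work already done: the sign of $\kk{\fcal+\gcal}{\ecal}+\kk{\fcal\cap\gcal}{\ecal}-\kk{\fcal}{\ecal}-\kk{\gcal}{\ecal}$ is a statement about ranks and the function $\kkvoid$ alone, so it does not see whether one works with slopes or with Hilbert polynomials. Consequently the entire case analysis of $(\rk{\fcal\cap\gcal},\rk{\fcal},\rk{\gcal},\rk{\fcal+\gcal})$ from the proof of Proposition \ref{prop-k-maximal-dest-subsheaf}, together with the identity $\kk{\fcal\cap\gcal}{\ecal}=\kk{\fcal\cap\gcal}{\fcal+\gcal}$ holding in each nontrivial configuration, carries over verbatim and gives inequality \eqref{eq-disuguaglianza-tra-i-kk}. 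Hence the right-hand side is $\preceq 0$, so $\hilpolred{\fcal\cap\gcal}^\kkvoid\succeq\hilpolred{\gcal}^\kkvoid\succeq\hilpolred{\fcal}^\kkvoid$ and the reduction claim holds. With $\gcal\subseteq\fcal$ secured, I would close the argument exactly as in Proposition \ref{prop-maximal-dest-subsheaf}: take $\gcal$ maximal inside $\fcal$, enlarge it to a $\preccurlyeq^\kkvoid$-maximal $\gcal'$ in $\ecal$, and contradict the minimality of $\rk{\fcal}$ via the same sum/intersection manipulation; uniqueness and $\kkvoid$-semistability of $\fcal$ then follow formally from properties (1) and (2).

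The main obstacle has therefore already been cleared in Proposition \ref{prop-k-maximal-dest-subsheaf}: it is the verification, through the finite rank-configuration table, that the $\kkvoid$-combination is non-positive, which is exactly what breaks down for $r\geq4$ and forces the hypothesis $r\leq3$. For the present statement the only genuinely new point is the bookkeeping check that passing from slopes to Hilbert polynomials preserves every step, and it does, since all the manipulations are additive over short exact sequences and monotone with respect to ``$\preceq$''.
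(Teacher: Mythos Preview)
Your proposal is correct and matches the paper's own approach exactly: the paper's proof consists of the single sentence ``The proof is similar to the proof of Proposition \ref{prop-k-maximal-dest-subsheaf},'' and what you have written is precisely the intended unpacking of that remark---replace $\mu^\kkvoid$ by $\hilpolred{}^\kkvoid$, $\deltabar$ by $\delta$, ``$\leq$'' by ``$\preceq$'', and observe that the rank--$\kkvoid$ case analysis is insensitive to this change.
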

\begin{proof}
 The proof is similar to the proof of Proposition \ref{prop-k-maximal-dest-subsheaf}.
\end{proof}

\begin{remark}
 As in the \fr-semistable case, if $(\ecal,\varphi)$ is $\kkvoid$-semistable (resp. slope $\kkvoid$-semistable) the maximal $\kkvoid$-destabilizing (resp. slope $\kkvoid$-destabilizing) subsheaf coincide with $\ecal$.
\end{remark}
\subsection{Restriction theorem}\label{sec-restriction-theorem2}
In the previous section we proved that, given a decorated sheaf $(\ecal,\varphi)$ of rank less or equal to $3$, there exists a unique maximal $\kkvoid$-destabilizing subsheaf $(\fcal,\rest{\varphi}{\fcal})$. Since, as we noticed in Section \ref{sec-families-quotients}, there is a one-to-one correspondence between decorated subsheaves of $(\ecal,\varphi)$ and quotients of $\ecal$, we will call \textbf{minimal $\kkvoid$-destabilizing quotient} the (unique) sheaf $\qcal\doteqdot\text{coker}(\fcal\hookrightarrow\ecal)$ such that $\fcal$ is the maximal $\kkvoid$-destabilizing subsheaf.\\

In analogy with Section \ref{sec-mehta-ramanathan}, we will say that a decorated sheaf $(\ecal,\varphi)$ over a Noetherian scheme $Y$ is \textbf{flat over the fibre of a morphism} $f:Y\to S$ of finite type between Noetherian schemes if and only if
\begin{itemize}
 \item[-] $\ecal$ and $\linebu$ are flat families of sheaves over the fibre of $f:Y\to S$;
 \item[-] $\kk{\ecal_s}{\ecal_s}$ is locally constant as a function of $s$, where $\ecal_s\doteqdot\rest{\ecal}{f^{-1}(s)}$.
\end{itemize}
Note that the above conditions imply that the $\kkvoid$-Hilbert polynomials $\hilpol{\ecal_s}^\kkvoid$ are locally constant for $s\in S$. The converse holds only if $S$ is irreducible: i.e., asking that $\hilpol{\ecal_s}^\kkvoid$ is locally constant as function of $s$ is equivalent to ask that $\hilpol{\ecal_s}$ and $\kk{\ecal_s}{\ecal_s}$ are locally constant as functions of $s$.\\

If $(\fcal,\rest{\varphi}{\fcal})$ slope de-semistabilizes (resp. de-semistabilizes) $(\ecal,\varphi)$ then $\mu^\kkvoid(\fcal)>\mu^\kkvoid(\ecal)$ (resp $\hilpolred{\fcal}^\kkvoid\succ\hilpolred{\ecal}^\kkvoid$). Let $\qcal\doteqdot\text{coker}(\fcal\hookrightarrow\ecal)$; then
\begin{equation*}
 \mu(\qcal)<\mu(\ecal)-\deltabar\left(\frac{\kk{\fcal}{\ecal}}{\rkk{\qcal}}-\frac{a\rkk{\fcal}}{\rkk{\ecal}\rkk{\qcal}}\right),
\end{equation*}
or
\begin{equation*}
 \hilpolred{\qcal}\prec\hilpolred{\ecal}-\delta\left(\frac{\kk{\fcal}{\ecal}}{\rkk{\qcal}}-\frac{a\rkk{\fcal}}{\rkk{\ecal}\rkk{\qcal}}\right),
\end{equation*}
respectively.

Define $C_{\kkvoid,i}\doteqdot\left(\frac{i}{\rkk{\qcal}}-\frac{a\rkk{\fcal}}{\rkk{\ecal}\rkk{\qcal}}\right)$ for $i=0,\dots,a$. Let $(\ecal,\varphi)$ be a flat family of decorated sheaves over the fibre of a projective morphism $f:X\to S$. Let $\hilpol{}=\hilpol{\ecal_s}$ and $\hilpolred{}=\hilpolred{\ecal_s}$ the Hilbert polynomial and, respectively, the reduced Hilbert polynomial of $\ecal_s$ (which are constant because the family is flat over $S$). Define:
\begin{enumerate}
 \item $\mathfrak{F}_{\kkvoid}$ as the family on $X$ parameterized by $S$ of saturated subsheaves $\fcal\hookrightarrow\ecal_s$ such that the induced torsion free quotient $\ecal_s\twoheadrightarrow\qcal$ satisfy $\mu(\qcal)\leq\mu(\ecal_s)+\deltabar C_{\kkvoid,0}$;
 \item $\mathfrak{F}_{\kkvoid,i}$ as the family of decorated subsheaves $(\fcal,\rest{\varphi}{\fcal})\hookrightarrow(\ecal_s,\rest{\varphi}{\ecal_s})$ such that:\begin{itemize}
	 \item[-] $\kk{\fcal}{\ecal_s}=i$;
	 \item[-] $\hilpolred{\fcal}^\kkvoid\succ\hilpolred{\ecal}^\kkvoid$ (i.e., $\hilpolred{\qcal}\prec\hilpolred{}+\delta C_{\kkvoid,i}$ with $\qcal\doteqdot\text{coker}(\fcal\hookrightarrow\ecal_s)$);
	 \item[-] $\fcal$ is a saturated subsheaf of $\ecal_s$,
      \end{itemize}
for $i=0,\dots,a$.
\end{enumerate}

It is easy to see that these families are bounded and therefore, using the same techniques used in Section \ref{sec-opennes-semistab-condition} one can prove that $\kkvoid$-semistability is open. More precisely
\begin{proposition}
  Let $f:X\to S$ be a projective morphism of Noetherian schemes and let $(\ecal,\varphi)$ be a flat family of decorated sheaves over the fibre of $f$. The set of points $s\in S$ such that $(\ecal_s,\rest{\varphi}{\ecal_s})$ is $\kkvoid$-(semi)stable with respect to $\delta$ is open in $S$.
\end{proposition}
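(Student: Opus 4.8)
The plan is to follow closely the proof of Proposition~\ref{prop-eps-semistability-open-condition}, the only structural difference being that $\kk{\fcal}{\ecal}$ now ranges over $\{0,1,\dots,a\}$ rather than over $\{0,1\}$, so that the two families $\mathfrak{F}_0,\mathfrak{F}_1$ of that argument are replaced by the $a+1$ families $\mathfrak{F}_{\kkvoid,i}$. First I would record, for each $i\in\{0,\dots,a\}$, the set of quotient Hilbert polynomials
\[
 A_{\kkvoid,i}\doteqdot\{P''\in\qq[x]\,|\,\exists\,s\in S,\ \exists\,q\colon\ecal_s\twoheadrightarrow\qcal \text{ with }\hilpol{\qcal}=P'' \text{ and }\ker(q)\in\mathfrak{F}_{\kkvoid,i}\}.
\]
Each $A_{\kkvoid,i}$ is finite: the families $\mathfrak{F}_{\kkvoid,i}$ are contained in the bounded family $\mathfrak{F}_\kkvoid$ (the destabilizing inequality bounds the hat-slope of the quotients from above, so Proposition~\ref{prop-limitatezza-fam-quozienti-slope-buonded} applies), and Lemma~\ref{lemma-equiv-boundness-conditions} then yields finiteness of the associated Hilbert polynomials. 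Writing $\hilpolred{}=\hilpolred{\ecal_s}$, note that every $P''\in A_{\kkvoid,i}$ satisfies the destabilizing bound $\hilpolred{\qcal}\prec\hilpolred{}-\delta C_{\kkvoid,i}$ attached to the value $i$.

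The decisive point is a monotonicity in $i$. If $q\colon\ecal_s\twoheadrightarrow\qcal$ has $\hilpol{\qcal}=P''$ and $\fcal=\ker(q)$ with $j\doteqdot\kk{\fcal}{\ecal_s}$, then $(\fcal,\rest{\varphi}{\fcal})$ $\kkvoid$-destabilizes $(\ecal_s,\varphi_s)$ exactly when $\hilpolred{\qcal}\prec\hilpolred{}-\delta C_{\kkvoid,j}$, the constants $C_{\kkvoid,j}$ and $C_{\kkvoid,i}$ being formed from the ranks $\rkk{\fcal},\rkk{\qcal}$ already fixed by $P''$. Since $C_{\kkvoid,j}-C_{\kkvoid,i}=(j-i)/\rkk{\qcal}$ and $\delta$ has positive leading coefficient, the inequality $j\leq i$ forces $-\delta C_{\kkvoid,j}\succeq-\delta C_{\kkvoid,i}$; hence, whenever $P''$ satisfies the $i$-bound and $\kk{\fcal}{\ecal_s}\leq i$, the subsheaf $\fcal$ automatically destabilizes. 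This is what lets me replace, for each $i\leq a-1$, the full Quot scheme by the closed subscheme
\[
 \quot^{\le i}_{X/S}(\ecal,\varphi,P'')\doteqdot\{q\in\quot_{X/S}(\ecal,P'')\,|\,\rest{\varphi}{(\ker q)^{\diamond(i+1)}\diamond\ecal^{\diamond(a-i-1)}}\equiv0\},
\]
i.e.\ the locus where $\kk{\ker q}{\ecal}\leq i$, while for $i=a$ one keeps all of $\quot_{X/S}(\ecal,P'')$. Each $\quot^{\le i}_{X/S}(\ecal,\varphi,P'')$ is projective over $S$ by the same argument that realizes $\quot^{\mathfrak{0}}_{X/S}(\ecal,\varphi,P'')$ as a closed subscheme of $\quot_{X/S}(\ecal,P'')$; indeed $\quot^{\mathfrak{0}}$ is precisely the case $i=a-1$.

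Finally I would form
\[
 Z\doteqdot\bigcup_{i=0}^{a}\ \bigcup_{P''\in A_{\kkvoid,i}}\pi^{\le i}\big(\quot^{\le i}_{X/S}(\ecal,\varphi,P'')\big),
\]
which is closed in $S$, being a finite union of images of projective $S$-schemes. By the monotonicity above every point of $Z$ carries a $\kkvoid$-destabilizing subsheaf, so $Z$ lies inside the non-$\kkvoid$-semistable locus; conversely, if $(\ecal_s,\varphi_s)$ fails to be $\kkvoid$-semistable it admits a destabilizing subsheaf, and replacing this subsheaf by its saturation leaves $\kk{\fcal}{\ecal_s}$ unchanged (Proposition~\ref{prop-proprieta-di-k}(2)) and can only increase $\hilpolred{\fcal}^\kkvoid$, so one may take it saturated, whence it lies in some $\mathfrak{F}_{\kkvoid,i}$ and places $s$ in the corresponding image. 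Thus $Z$ is exactly the non-$\kkvoid$-semistable locus, and its complement is the asserted open set. The $\kkvoid$-stable case is identical once each $A_{\kkvoid,i}$ is enlarged to $A^{\mathrm{st}}_{\kkvoid,i}\doteqdot\{P''\in A_\kkvoid\,|\,\hilpolred{\qcal}\preceq\hilpolred{}-\delta C_{\kkvoid,i}\}$, that is, once $\prec$ is weakened to $\preceq$. The step I expect to cost the most effort is checking that ``$\kk{\ker q}{\ecal}\leq i$'' cuts out a closed subscheme in the relative setting, but this is dealt with exactly as for $\quot^{\mathfrak{0}}$, through the closedness of the vanishing locus of the morphism induced by $\varphi$ on the tensor component $(\ker q)^{\diamond(i+1)}\diamond\ecal^{\diamond(a-i-1)}$.
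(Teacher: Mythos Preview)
Your proposal is correct and follows essentially the same approach as the paper, which merely defines the sets $A_{\kkvoid,i}$ and then defers to the argument of Proposition~\ref{prop-eps-semistability-open-condition}. Your explicit monotonicity observation and the introduction of the closed subschemes $\quot^{\le i}_{X/S}(\ecal,\varphi,P'')$ are exactly the natural way to make that deferral precise, generalizing the pair $(\quot,\quot^{\mathfrak{0}})$ of the $\eps$-case to the chain indexed by $i\in\{0,\dots,a\}$.
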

\begin{proof}
 Let
\begin{equation}
 A\doteqdot\{P''\in\qq[x] \,|\, \exists\, s\in S, \exists\, q:\ecal_s\twoheadrightarrow\qcal \text{ such that }\hilpol{\qcal}=P'' \text{ and } \ker(q)\in\mathfrak{F}_\kkvoid\}
\end{equation}
and, for $i=0,\dots,a$,
\begin{equation*}
 A_{\kkvoid,i}\doteqdot\{P''\in\qq[x] \,|\, \exists\, s\in S, \exists\, q:\ecal_s\twoheadrightarrow\qcal \text{ such that }\hilpol{\qcal}=P'' \text{ and } \ker(q)\in\mathfrak{F}_{\kkvoid,i}\}.
\end{equation*}
 Then, using the same techniques used in the proof of Proposition \ref{prop-eps-semistability-open-condition}, one concludes the proof.
\end{proof}

Thanks to the previous results and using the same arguments as in the proof of Theorem \ref{teo-rel-max-dest}, it is easy to see that the following theorem holds true:
\begin{theorem}[\textbf{Relative maximal $\kkvoid$-destabilizing subsheaf}]
 Let $X$, $\amplebu{1}$, $S$ and $f:X\to S$ as before. Let $(\ecal,\varphi)$ be a decorated sheaf of rank $r\leq3$. Then there is an integral $k$-scheme $T$ of finite type, a projective birational morphism $g:T\to S$, a dense open subset $U\subset T$ and a flat quotient $\qcal$ of $\ecal_T$ such that for all points $t\in U$, $\fcal_t\doteqdot\ker(\ecal_t\twoheadrightarrow\qcal_t)$ with the induced morphism $\rest{\varphi_t}{\fcal_t}$ is the maximal $\kkvoid$-destabilizing subsheaf of $(\ecal_t,\varphi_t)$ or $\qcal_t=\ecal_t$.\\
 Moreover the pair $(g,\qcal)$ is universal in the sense that if $g':T'\to S$ is any dominant morphism of $k$-integral schemes and $\qcal'$ is a flat quotient of $\ecal_T'$, satisfying the same property of $\qcal$, there is an $S$-morphism $h:T'\to T$ such that $h^*_X(\qcal)=\qcal'$.
\end{theorem}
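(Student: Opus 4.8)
The plan is to replay the proof of Theorem~\ref{teo-rel-max-dest} almost verbatim, the one structural change being that the two strata indexed by $\eps\in\{0,1\}$ are replaced by the $a+1$ strata indexed by the possible values $i\in\{0,1,\dots,a\}$ of $\kk{\fcal}{\ecal}$. First I would record the two fibrewise inputs that drive the argument and that are already in place: by the boundedness of $\mathfrak{F}_{\kkvoid}$ and of the $\mathfrak{F}_{\kkvoid,i}$ the sets $A$ and $A_{\kkvoid,i}$ of Hilbert polynomials of $\kkvoid$-destabilizing quotients are finite, and for every geometric fibre the maximal slope $\kkvoid$-destabilizing subsheaf exists and is \emph{unique} by Proposition~\ref{prop-k-maximal-dest-subsheaf}. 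It is precisely the latter that forces the hypothesis $r\le 3$, and it is the analogue of the uniqueness used silently in the $\eps$-proof.

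Next I would set up the Quot-theoretic scaffolding. For each $i$ I would introduce a subscheme $\quot^{\le i}_{X/S}(\ecal,\varphi,P'')\subseteq\quot_{X/S}(\ecal,P'')$ parametrising those quotients whose kernel $\fcal$ satisfies $\kk{\fcal}{\ecal_s}\le i$; since this amounts to the vanishing of $\varphi$ on $\fcal^{\diamond(i+1)}\diamond\ecal^{\diamond(a-i-1)}$, it is a closed condition, so $\quot^{\le i}_{X/S}$ is a closed projective $S$-subscheme, exactly as $\quot^{\mathfrak{0}}_{X/S}=\quot^{\le 0}_{X/S}$ was obtained. Intersecting with the open locus $\kk{\fcal}{\ecal_s}\ge i$ isolates the stratum $\kk{\fcal}{\ecal_s}=i$. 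With these schemes available I would define, for $i=0,\dots,a$, the finite sets $B_{\kkvoid,i}$ from $A_{\kkvoid,i}$ together with their dominant refinements $\check{B}_{\kkvoid,i}$, i.e.\ those $P''$ whose associated Quot scheme dominates $S$, mimicking the passage from $A_0,A_1$ to $\check{B}_0,\check{B}_1$.

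I would then order each $\check{B}_{\kkvoid,i}$ by comparing the $\kkvoid$-reduced polynomials of the kernels: writing $p_j=P_j/r_j$ for the reduced Hilbert polynomial of $\ker(q_j)$, set $P_1''\vartriangleleft P_2''$ when $p_1-\frac{\delta\,i}{r_1}\succ p_2-\frac{\delta\,i}{r_2}$, with ties broken by larger rank, the shift $\frac{\delta\,i}{r}$ being read off from the destabilizing inequality $\mu(\qcal)<\mu(\ecal)-\deltabar C_{\kkvoid,i}$. Choosing a $\vartriangleleft$-minimal $P_-^{''i}$ in each $\check{B}_{\kkvoid,i}$ and then the global minimum $P_-''$ across the $a+1$ strata, I would cut out the proper closed $U_-'\subset S$ as the union of the images of all Quot schemes attached to polynomials strictly below the chosen minima and set $U_-$ to be its complement; over $U_-$ the Quot scheme for $P_-''$ dominates $S$ and its generic quotient extends to a flat quotient $\qcal$ whose kernel is fibrewise the maximal $\kkvoid$-destabilizing subsheaf, the uniqueness of Proposition~\ref{prop-k-maximal-dest-subsheaf} guaranteeing that this is the correct object. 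The integral $T$, the projective birational $g\colon T\to S$ and the universality of $(g,\qcal)$ are then produced, unchanged from the $\eps$-case, by re-adapting the functorial construction of \cite{Frank-rest}.

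The main obstacle is the global comparison across the $a+1$ strata. In the $\eps$-setting there were only two candidates, and the proof closed with the four explicit cases (Cases~$1$--$4$) comparing $p_-^0$ with $p_-^1-\frac{a\delta}{r_-^1}$; here one must instead select the true minimum among $a+1$ competitors whose $\kkvoid$-slopes are displaced by the $i$-dependent constants $C_{\kkvoid,i}$. I expect the delicate point not to be the bookkeeping but the verification that the winning stratum really furnishes the maximal $\kkvoid$-destabilizing subsheaf on each fibre; this is where one leans decisively on the uniqueness in Proposition~\ref{prop-k-maximal-dest-subsheaf}, hence on $r\le 3$, to exclude that two distinct strata tie and yield genuinely different minimal quotients.
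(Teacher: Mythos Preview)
Your proposal is correct and follows exactly the approach the paper takes: the paper's own proof is a single sentence saying that ``using the same arguments as in the proof of Theorem~\ref{teo-rel-max-dest}'' one obtains the result, and you have faithfully unpacked what this means in the $\kkvoid$-setting, replacing the two $\eps$-strata by the $a{+}1$ strata indexed by the values of $\kk{\fcal}{\ecal}$ and invoking Proposition~\ref{prop-k-maximal-dest-subsheaf} for the fibrewise uniqueness. Your discussion of the cross-stratum comparison is in fact more detailed than anything the paper spells out.
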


Finally, following the costructions made in Section \ref{sec-restriction-theorem} and replacing $\kkvoid$ with $\eps$, one can prove the following
\begin{theorem}[\textbf{Mehta-Ramanathan for slope $\kkvoid$-semistable decorated sheaves}]
Let $X$ be a smooth projective surface and $\amplebu{1}$ be, as usual, a very ample line bundle. Let $(\ecal,\varphi)$ be a slope $\kkvoid$-semistable decorated sheaf of rank $r\leq3$. Then there is an integer $\ai_0$ such that for all $\ai\geq \ai_0$ there is a dense open subset $U_\ai\subset |\amplebu{\ai}|$ such that for all $D\in U_\ai$ the divisor $D$ is smooth and $\rest{(\ecal,\varphi)}{D}$ is slope $\kkvoid$-semistable.
\end{theorem}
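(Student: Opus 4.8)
The plan is to argue by contradiction, transcribing the strategy of Theorem \ref{teo-rest} with every slope \fr-invariant replaced by its $\kkvoid$-counterpart. Suppose the conclusion fails. By the relative maximal $\kkvoid$-destabilizing subsheaf theorem stated just above (the $\kkvoid$-analogue of Theorem \ref{teo-rel-max-dest}), for each $\ai$ there is a dense open $\vai\subset|\amplebu{\ai}|$ carrying the relative maximal $\kkvoid$-destabilizing subsheaf $\fcal_\ai\hookrightarrow\ecal_\ai\doteqdot q_\ai^*\ecal$ with torsion-free quotient $\qcal_\ai$, such that for every $D\in\vai$ the fibre $\rest{\fcal_\ai}{p_\ai^{-1}(D)}$ is the maximal slope $\kkvoid$-destabilizing subsheaf of $\rest{(\ecal,\varphi)}{D}$. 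Here is precisely where the rank hypothesis enters: Proposition \ref{prop-k-maximal-dest-subsheaf} guarantees existence and uniqueness of this subsheaf only for $r\leq 3$, so the whole relative machinery is available exactly in our range.

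First I would establish the constancy results. Reproducing the subadditivity lemma that precedes Corollary \ref{cor-constant-slope}, but now for the $\kkvoid$-slope of the minimal destabilizing quotient, requires replacing the bound $\sum_i\epss{\fcal_i}\geq\eps(\ai)\geq\epss{\fcal_i}$ by the corresponding control on the integer $\kk{\fcal}{\ecal}$ supplied by Proposition \ref{prop-proprieta-di-k}. Concretely, splitting $D=\sum_i D_i$ into a normal-crossing sum via Lemma \ref{lem-esistono-D_i} and restricting, one must prove the $\kkvoid$-analogues of the three slope inequalities together with the matching statement that $\kk{\rest{\fcal_\ai}{D}}{\rest{\ecal}{D}}$ behaves subadditively; the sum/intersection estimates of Proposition \ref{prop-proprieta-di-k}(3)--(6) are exactly what make the restriction-to-$D_i$ computation close. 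From this, as in Corollary \ref{cor-constant-slope}, the rank $\rkq{\ai}=\rqq$, the normalized $\kkvoid$-slope, and the fibrewise value $\kk{\rest{\fcal_\ai}{D}}{\rest{\ecal}{D}}$ are all constant for $\ai\gg0$; Lemma \ref{lem-constant-lin-bun} then gives a fixed $L\in\pic(X)$ with $\det(\rest{\qcal_\ai}{D})=\rest{L}{D}$ and $\deg(\rest{\qcal_\ai}{D})=\ai\deg(L)$.

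With these invariants frozen, I would run the extension argument of Theorem \ref{teo-rest} without change. For a smooth $D\in U_\ai$, the surjection $\rest{\ecal}{D}\to\sudi{\qcal}$ yields $\sigma_D\colon\Lambda^\rqq\rest{\ecal}{D}\to\rest{L}{D}$; Serre's theorem and Serre duality force $\mbox{Ext}^i(\Lambda^\rqq\ecal,L(-\ai))=0$ for $i=0,1$ and $\ai\gg0$, so $\sigma_D$ extends uniquely to $\sigma\in\Hom(\Lambda^\rqq\ecal,L)$, which factors through $\mbox{Grass}(\ecal,\rqq)$ and produces a quotient $q\colon\ecal\to\qcal$ with $\det\qcal=L$ (by Lemma 7.2.2 of \cite{HLbook}). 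Setting $\fcal\doteqdot\ker q$ one has $\rest{\fcal}{D}=\sudi{\fcal}$, and Fujita's vanishing (\cite{Lazar}) gives $\mbox{Ext}^j(\fcal\ab,\linebu(-\ai))=0$ for $j=0,1$ and $\ai\gg0$, which lets the decoration $\rest{\varphi}{\rest{\fcal\ab}{D}}$ be extended over $\fcal\ab$.

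The main obstacle is the final compatibility step. In the \fr-case it sufficed that the \emph{single bit} $\eps(\rest{\varphi}{\fcal})$ equal $\eps(\ai)$; here one must instead ensure that the \emph{integer} $\kk{\fcal}{\ecal}$ computed on $X$ agrees with the fibrewise value $\kk{\rest{\fcal}{D}}{\rest{\ecal}{D}}$ used in the destabilizing inequality. This is exactly the purpose of Proposition \ref{prop-proprieta-di-k}(1)--(2): $\kk{\fcal}{\ecal}$ is detected on a dense open $U\subseteq X$ where $\fcal$ and $\ecal$ trivialize, so for a general $D\in U_\ai$ meeting $U$ the non-vanishing of $\varphi$ on the relevant tensor of local sections transfers between $X$ and $D$, forcing $\kk{\fcal}{\ecal}=\kk{\rest{\fcal}{D}}{\rest{\ecal}{D}}$. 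Once this equality is secured, the fibrewise strict inequality $\mu^\kkvoid(\sudi{\fcal})>\mu^\kkvoid(\rest{\ecal}{D})$ propagates to $\mu^\kkvoid(\fcal)>\mu^\kkvoid(\ecal)$, so $(\fcal,\rest{\varphi}{\fcal})$ slope $\kkvoid$-destabilizes $(\ecal,\varphi)$, contradicting the hypothesis and completing the proof.
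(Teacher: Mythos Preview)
Your proposal is correct and takes essentially the same approach as the paper: the paper's own proof is the single sentence ``following the constructions made in Section \ref{sec-restriction-theorem} and replacing $\eps$ with $\kkvoid$, one can prove the following,'' and what you have written is precisely a careful unpacking of that sentence. You even flag the two points the paper leaves implicit---where the hypothesis $r\leq3$ enters (via Proposition \ref{prop-k-maximal-dest-subsheaf}) and why the integer invariant $\kk{\fcal}{\ecal}$ is compatible with restriction (via Proposition \ref{prop-proprieta-di-k}(1)--(2))---so your write-up is strictly more detailed than the paper's, but not a different argument.
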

\subsection{Decorated sheaves of rank $2$}\label{sec-decorated-sheaves-rank-2}
\begin{lemma}\label{lemma-decorati-rk2-kss-uguale-ss}
 Let $(\ecal,\varphi)$ be a decorated sheaf of rank $r=2$. Then the following conditions are equivalent:
\begin{itemize}
 \item $(\ecal,\varphi)$ is (semi)stable (in the sense of Definition \ref{def-ss});
 \item $(\ecal,\varphi)$ is $\kkvoid$-(semi)stable.
\end{itemize}
\end{lemma}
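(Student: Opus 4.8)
The plan is to establish the two implications separately, noting that one of them is already available in full generality. The implication ``(semi)stable $\Rightarrow$ $\kkvoid$-(semi)stable'' holds for decorated sheaves of arbitrary rank, being precisely the second half of Proposition \ref{prop-eps-implica-ss-implica-k}; so I would dispose of this direction by a single citation and concentrate all the work on the converse, which is where the hypothesis $\rk{\ecal}=2$ is used.

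For the converse, suppose $(\ecal,\varphi)$ is $\kkvoid$-(semi)stable and let $(\filtrationcal)_\ttI$ be an arbitrary weighted filtration of $\ecal$. The one genuine observation in the proof is that in rank two every weighted filtration has length one. The saturated subsheaves occurring in a filtration have strictly increasing ranks $0<i_1<\dots<i_s<\rk{\ecal}=2$, and the only integer in the open interval $(0,2)$ is $1$; hence $\ttI=\{1\}$ and the filtration is forced to be $0\subset\ecal_1\subset\ecal$ with $\ecal_1$ a saturated subsheaf of rank one, equipped with a single positive weight $\alpha_1$.

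Next I would reduce the defining inequality of Definition \ref{def-ss} to the single-subsheaf inequality. Both $\pttI$ and $\muttI$ are linear in the weight vector (compare \eqref{eq-def-P-di-filtrazione} and the expression for $\gammattI$; for a one-element index set the minimum defining $\muttI$ is taken of quantities each scaling by $\alpha_1>0$, so the minimum itself scales by $\alpha_1$). Thus $\pttI(\filtrationcal)+\delta\muttI(\filtrationcal;\varphi)=\alpha_1\big(P(0\subset\ecal_1\subset\ecal,\underline{1})+\delta\,\mu(0\subset\ecal_1\subset\ecal,\underline{1};\varphi)\big)$, and since $\alpha_1>0$ the sign is unchanged. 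By the calculation recorded just before Proposition \ref{prop-eps-implica-ss-implica-k}, the bracketed quantity equals $\hilpol{\ecal}\rk{\ecal_1}-\rk{\ecal}\hilpol{\ecal_1}+\delta\big(\rk{\ecal}\kk{\ecal_1}{\ecal}-a\rk{\ecal_1}\big)$, which is exactly the expanded form \eqref{eq-def-k-ss} of $\kkvoid$-(semi)stability applied to the subsheaf $\ecal_1$.

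Finally, since $\ecal_1$ is in particular a subsheaf of $\ecal$ and $(\ecal,\varphi)$ is assumed $\kkvoid$-(semi)stable, this expression is $\ssucceq0$; hence $\pttI+\delta\muttI\ssucceq0$ for every weighted filtration, which is the definition of (semi)stability. I do not expect a real obstacle here: the whole force of the statement is that in rank two there is no room for filtrations longer than one step, so the distinction between testing subsheaves and testing filtrations collapses. The only point deserving a word is that $\kkvoid$-(semi)stability tests all subsheaves and therefore, a fortiori, the saturated rank-one subsheaves arising in filtrations, so no separate saturation step is needed in this direction.
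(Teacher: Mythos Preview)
Your proof is correct and follows essentially the same line as the paper's. The paper's argument is more terse: it observes that in rank two every filtration has length one and is therefore non-critical, and then invokes point (4) of Remark \ref{rem-ss-uguale-kss-piu-condition-su-filtr-critiche} (semistability is equivalent to $\kkvoid$-semistability plus a condition on critical filtrations, which is vacuous here). You unpack this same reasoning directly by computing $\pttI+\delta\muttI$ for a one-step filtration and identifying it with the $\kkvoid$-inequality, which amounts to the same thing.
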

\begin{proof}
 Since the rank of $\ecal$ is equal to $2$ all filtrations of $\ecal$ are non-critical and of lengh one. Then the statement follows from the fourth point of Remark \ref{rem-ss-uguale-kss-piu-condition-su-filtr-critiche}.
\end{proof}

Thanks to the previous Lemma all results in the previous section holds true for rank $2$ semistable decorated sheaves of type $(a,b,\linebu)$. In particular, for such objects
\begin{itemize}
 \item we have found the maximal destabilizing subsheaf and the relative maximal destabilizing subsheaf;
 \item we have provided the Harder-Narasimhan filtration and the relative Harder-Narasimhan filtration;
 \item we have proved that the semistability condition is open;
 \item we have proved a Mehta-Ramanathan's like theorem.
\end{itemize}
\section{Further remarks}\label{sec-remarks}
\begin{enumerate}
 \item In Section \ref{sec-mehta-ramanathan} we never used that $\linebu$ is of rank $1$ nor that it is a vector bundle. We only used that it is a pure dimensional torsion free sheaf (of positive rank). Therefore all results in this chapter can be easily generalized for pairs $(\ecal,\varphi)$ of type $(a,b,c,\linebu)$ where $\ecal$ and $\linebu$ are torsion free sheaves over $X$, $a,b,c$ are positive integers and
\begin{equation*}
 \varphi\colon\ecal\ab\longrightarrow\det(\ecal)^{\otimes c}\otimes\linebu.
\end{equation*}

 \item Let $(\acal,\varphi)$ be a decorated sheaf of rank $r>0$. Define $\wtild{\acal}\doteqdot\acal\ab$; then the pair $(\wtild{\acal},\varphi)$ can be regarded as a framed sheaf. Recall that a framed sheaf $(A,\alpha)$ of positive rank and with nonzero morphism $\alpha$ is slope semistable with respect to $\wtild{\delta}$ if and only if for any $F\subset A$
\begin{equation*}
 \mu(F)-\frac{\wtild{\delta} \; \eps(\rest{\alpha}{F})}{\rk{F}} \leq \mu(A)-\frac{\wtild{\delta}}{\rk{A}}
\end{equation*}
Suppose now that $(\wtild{\acal},\varphi)$ is frame semistable with respect to $\wtild{\delta}$, then $(\acal,\varphi)$ is slope \fr-semistable with respect to $\hat{\delta}\doteqdot \frac{\wtild{\delta}}{a^2\,b\,r^{a-1}}$. In fact if
\begin{equation*}
 \rk{\acal\ab} = b\,r^{a} \qquad \deg(\acal\ab) = a\,b\,r^{(a-1)}\deg(\acal)
\end{equation*}
and so if $\fcal$ is a subsheaf of $\acal$ then
\begin{equation*}
 \mu(\fcal\ab)-\frac{\wtild{\delta} \; \eps(\rest{\varphi}{\fcal\ab})}{\rkk{\fcal\ab}} \leq  \mu(\wtild{\acal})-\frac{\wtild{\delta}}{\rkk{\wtild{\acal}}}
\end{equation*}
which implies that
\begin{equation*}
a \mu(\fcal)-\frac{\wtild{\delta} \; \eps(\rest{\varphi}{\fcal\ab})}{b(\rkk{\fcal})^{a}} \leq a \mu(\acal)-\frac{\wtild{\delta}}{b r^a}\\
\end{equation*}
and so
\begin{equation*}
\dmu(\fcal)=\mu(\fcal)-a\underbrace{\frac{\wtild{\delta}}{a^2 b(\rkk{\fcal})^{a-1}}}_{=\hat{\delta}} \; \frac{\eps(\rest{\varphi}{\fcal\ab})}{\rkk{\fcal}} \leq  \mu(\acal)-a\underbrace{\frac{\wtild{\delta}}{a^2 b r^{a-1}}}_{=\hat{\delta}} \frac{1}{r}=\dmu(\ecal).
\end{equation*}
Since the subsheaves of $\acal$ correspond to subsheaves of $\acal\ab$ but this correspondence is \textit{not} surjective, the converse does not hold in general but only if $a=1$ ($b$ and $c$ generic). Thanks to the previous calculations and to Proposition \ref{prop-eps-implica-ss-implica-k}, one has that
\begin{align*}
  (\acal\ab,\varphi) \;\; \wtild{\delta}\text{ frame slope (semi)stable} & \Rightarrow (\acal,\varphi) \;\; \hat{\delta}\text{ slope \fr-(semi)stable}\\ & \Rightarrow \hat{\delta}\text{ slope (semi)stable}\\
  & \Rightarrow \hat{\delta} \;\; \kkvoid\text{-(semi)stable}.
\end{align*}
Replacing $\ddeg$ by $\hilpol{}^\eps$ and $\dmu$ by $\hilpolred{}^\eps$, similar calculations show that the same result hols also for semistability and not only for slope semistability.

% \item Let $(\ecal,\varphi)$ be a decorated sheaf. If it is not semistable with respect to definition \eqref{eq-def-slope-ss} then it is not slope \fr-semistable (see Proposition \ref{prop-eps-implica-ss-implica-k}). Let $\fcal\subset\ecal$ be the maximal (slope \fr) destabilizing subsheaf and suppose that $\eps{(\rest{\varphi}{\fcal})}=1$, then $\fcal$ destabilize $(\ecal,\varphi)$. Indeed in this case $a\epss{\fcal}=\kk{\fcal}{\ecal}$ and so
% \begin{equation*}
% \frac{\deg(\fcal)-a\deltabar\epss{\fcal}}{\rkk{\fcal}}>\frac{\deg(\ecal)-a\deltabar}{\rkk{\ecal}},
% \end{equation*}
% multiplying by $\rkk{\ecal}\,\rkk{\fcal}$ one gets that
% \begin{equation*}
% \deg(\ecal)\rkk{\fcal}-\deg(\fcal)\rkk{\ecal}-a\delta\rkk{\fcal}+\rkk{\ecal}\delta\kk{\fcal}{\ecal}<0.
% \end{equation*}
\end{enumerate}
%
%
%%%%%%%%%%%%%%%%%%%%%%%%%%%%%%%%%%%%%%%%%%%%%%%%%%%%%%%%%%%%%%%%%%%%%%%%%%%%%%%%%%%%%%%%%%%%%%%%%%%%%%%%%%%%%%%%%%%%%%%%%%%%%%%%%%%%%%
%%%%%%%%%%%%%%%%%%%%%%%%%%%%%%%%%%%%%%%%%%%%%%%%%%	BIBLIO E RINGRAZIAMENTI		%%%%%%%%%%%%%%%%%%%%%%%%%%%%%%%%%%%%%%%%%%%%%%
%%%%%%%%%%%%%%%%%%%%%%%%%%%%%%%%%%%%%%%%%%%%%%%%%%%%%%%%%%%%%%%%%%%%%%%%%%%%%%%%%%%%%%%%%%%%%%%%%%%%%%%%%%%%%%%%%%%%%%%%%%%%%%%%%%%%%%
%
\bigskip
\textbf{Acknowledgments.} I would like to thank Professor Ugo Bruzzo for discussions and comments.\\

\end{document}